\documentclass[10pt,reqno, amssymb]{amsart}



\usepackage{mathtools} 

\usepackage[margin=0.9in]{geometry}

\usepackage{amsmath}
\usepackage{amssymb}
\usepackage{amsthm}
\usepackage{newlfont}
\usepackage{graphicx}

\newtheorem{thm}{Theorem}[section]

\newtheorem{lem}[thm]{Lemma}
\newtheorem{prop}[thm]{Proposition}

\theoremstyle{definition}

\theoremstyle{remark}
\newtheorem{rem}[thm]{Remark}
\newtheorem*{rem*}{Remark}
\numberwithin{equation}{section}


\newcommand{\ed}{\end {document}}


\title[Critical space]
{Optimal Gevrey regularity for supercritical quasi-geostrophic equations}

\author[D. Li]{Dong Li}
\address[D. Li]{Department of Mathematics, The University of Hong Kong, Hong Kong, China}
\email{mathdl@hku.hk}

\begin{document}
\begin{abstract}
We consider the two dimensional surface quasi-geostrophic equations with  super-critical 
dissipation. For large initial data in critical Sobolev and Besov spaces, we prove optimal Gevrey regularity
endowed with the same decay exponent as the linear part. This settles several  open problems
in \cite{Bis14, BMS15}.
\end{abstract}
\maketitle
\section{Introduction}
We consider the following two-dimensional dissipative surface quasi-geostrophic equation:
\begin{align} \label{1}
\begin{cases}
\partial_t \theta +(u \cdot \nabla) \theta +\nu D^{\gamma} \theta=0, 
\quad (t,x) \in (0,\infty) \times \mathbb R^2; \\
u= R^{\perp} \theta = (-R_2 \theta, R_1 \theta), \quad R_j =D^{-1} \partial_j;  \\
\theta (0,x) = \theta_0 (x),\quad x \in \mathbb R^2, 
\end{cases}
\end{align}
where  $\nu\ge 0$, $0<\gamma\le 2$,  $D=(-\Delta)^{\frac 12}$, $D^{\gamma}= (-\Delta)^{\frac{\gamma}2}$, 
and more generally the fractional operator $D^s =(-\Delta)^{\frac s 2}$ corresponds
to the Fourier multiplier $|\xi|^s$, i.e.
$
\widehat {D^s f }(\xi) = |\xi|^s \widehat{f} (\xi)
$
whenever it is suitably defined under certain regularity assumptions on $f$.  The 
scalar-valued unknown $\theta$ is the potential temperature, and $u=D^{-1} \nabla^{\perp} \theta$
corresponds to the velocity field of a fluid which is incompressible. One can write
$u=(-R_2\theta, R_1\theta)$ where $R_j$ is the $j^{\operatorname{th} }$ Riesz transform in
2D.  The dissipative quasi-geostrophic equation \eqref{1} can be derived from general 
quasi-geostrophic equations in the special case of constant potential vorticity and buoyancy 
frequency \cite{Ped87}. It models the evolution of the potential temperature $\theta$ of a geostrophic fluid with velocity $u$ on the
 boundary of a rapidly rotating half space. As such it is often termed 
 surface quasi-geostrophic equations in the literature.  If $\theta$ is a smooth solution to
 \eqref{1}, then it obeys the $L^p$-maximum principle, namely
 \begin{align} \label{1.2}
 \| \theta (t, \cdot) \|_{L^p(\mathbb R^2)} \le \| \theta_0 \|_{L^p(\mathbb R^2)}, 
 \qquad t\ge 0, \; \forall\, 1\le p \le \infty.
 \end{align}
 Similar results hold when the domain $\mathbb R^2$ is replaced by the periodic torus 
 $\mathbb T^2$.  Moreover, if $\theta_0$ is smooth and  in $ \dot H^{-\frac 12}(\mathbb R^2)$, then one can show that
 \begin{align} \label{1.3}
 \| \theta(t,\cdot) \|_{\dot H^{-\frac 12}(\mathbb R^2)} \le \| \theta_0 \|_{\dot H^{-\frac 12}(\mathbb R^2)},
 \quad t>0.
 \end{align}
 More precisely,  for the inviscid case $\nu=0$ one has conservation and for the dissipative
 case $\nu>0$ one has dissipation of  the $\dot{H}^{-\frac12}$-Hamiltonian. Indeed for $\nu=0$ by using the identity (below $P_{<J}$ is a smooth frequency projection to $\{|\xi|\le \mbox{constant}\cdot 2^J\}$)
$$\frac 12 \frac{d}{dt}  \| D^{-\frac 12} P_{<J} \theta \|_2^2
=-\int P_{<J} (\theta \mathcal R^{\perp} \theta) \cdot P_{<J} \mathcal R \theta dx,$$one can prove the conservation of
$\|D^{-\frac 12}
\theta\|_2^2$ under the assumption $\theta\in L_{t,x}^3$. 
 The two fundamental conservation laws \eqref{1.2} and \eqref{1.3} play important roles in the wellposedness theory for  both weak and strong solutions. In \cite{Res95}  Resnick  proved the global existence of a weak solution  for  {$0< \gamma\le 2$} in {$L_t^{\infty}L_x^2$} for any initial data 
 $\theta_0\in L_x^2$.   In \cite{Mar08} Marchand proved the existence of a global  weak solution in {$L_t^{\infty} H^{-\frac 12}_x$}  for  {$\theta_0\in \dot{H}^{-\frac 12}_x(\mathbb R^2)$} or {$L_t^{\infty} L^p_x$} for  {$\theta_0 \in L^p_x(\mathbb R^2)$}, 
 {$p\ge \frac 43$}, when  {$\nu>0$} and {$0<\gamma\le 2$}. It should be pointed out that in Marchand's result, the non-dissipative  case {$\nu=0$} requires {$p> 4/3$}
 since the embedding {$L^{\frac 43} \hookrightarrow \dot H^{-\frac 12}$} is not compact.
 On the other hand for the diffusive case one has extra {$L_t^2 \dot H^{\frac {\gamma}2-\frac 12}$} conservation
 by construction.  In recent \cite{CLH21}, non-uniqueness of stationary weak solutions were proved
 for  $\nu\ge 0$ and $\gamma<\frac 32$.  In somewhat positive direction,  uniqueness
of surface quasi-geostrophic patches for the non-dissipative case $\nu=0$ with moving boundary satisfying the arc-chord condition was obtained in  \cite{CCG18}. 

The purpose of this work is to establish optimal Gevrey regularity in the whole supercritical regime 
$0<\gamma< 1$. We begin by explaining the meaning of super-criticality.  For $\nu>0$, the
equation \eqref{1} admits a certain scaling invariance, namely: if $\theta $ is a solution, then
for $\lambda>0$ 
\begin{align}
\theta_{\lambda}(t,x) = \lambda^{1-\gamma} \theta (\lambda^{\gamma} t, \lambda x)
\end{align}
 is also a solution. As such the critical space for \eqref{1} is $\dot H^{2-\gamma}(\mathbb R^2)$ for $0\le \gamma\le 2$.  In terms of the $L^{\infty}$ conservation law, \eqref{1} is $L^{\infty}$-subcritical for $\gamma>1$, $L^{\infty}$-critical for $\gamma=1$ and $L^{\infty}$-supercritical
 for $0<\gamma<1$. Whilst the wellposedness theory for \eqref{1} is relatively complete for
 the subcritical and critical regime $1\le \gamma\le 2$ (cf. \cite{Abi08, D10, Ju04, CV10, KNV07} and the references therein), there are very few results in the
 supercritical regime $0<\gamma<1$ (\cite{Ju06, Ma14, Wu05c, Hmi07}).  In this connection
 we mention three representative works: 1)  The  work of H. Miura \cite{Mi06} which establishes  for the first time the large data local wellposedness in the critical space $H^{2-\gamma}$; 2) The  work of H. Dong \cite{D10} which via a new set of commutator estimates establishes optimal polynomial in time smoothing
estimates for critical and supercritical quasi-geostrophic equations;
3) The  work of Biswas, Martinez and Silva \cite{BMS15} which establishes short-time 
Gevrey regularity with an exponent \textbf{strictly less than $\gamma$}, namely:
\begin{align}
\sup_{0<t<T} \| e^{\lambda t^{\frac {\alpha}{\gamma} } D^{\alpha} } \theta(t,\cdot)
\|_{\dot B^{1+\frac 2p-\gamma}_{p,q}} \lesssim \| \theta_0\|_{\dot B^{1+\frac 2p-\gamma}_{p,q}},
\end{align}
where $2\le p<\infty$, $1\le q<\infty$ and $\alpha<\gamma$.

Inspired by these preceding works, we develop in this paper an optimal local regularity theory for the super-critical quasi-geostrophic equation. Set $\nu=1$ in \eqref{1}. If we completely drop the nonlinear term and keep
only the linear dissipation term, then the linear solution is given by 
\begin{align}
\theta_{\mathrm{linear}}(t, x) = (e^{ -tD^{\gamma} } \theta_0)(t,x).
\end{align}
Formally speaking, one has the identity $e^{tD^{\gamma}}( \theta_{\mathrm{linear}}(t,\cdot) )
=\theta_0$ for any $t>0$.  This shows that the best smoothing estimate one can hope for is
\begin{align}
\| e^{tD^{\gamma} } (\theta_{\mathrm{linear} } (t,\cdot ) ) \|_X \lesssim \| \theta_0 \|_X,
\end{align}
where $X$ is a working Banach space.  The purpose of this work,  rough speaking, 
is to show that for the nonlinear local solution to \eqref{1} (say taking $\nu=1$ for simplicity
of notation), we have 
\begin{align}
\| e^{(1-\epsilon_0) t D^{\gamma} } ( \theta(t,\cdot ) ) \|_X \lesssim_{\epsilon_0} \| \theta_0 \|_X,
\end{align}
where $\epsilon_0>0$ can be taken any small number, and $X$ can be a Sobolev or Besov space.
In this sense this is the  \emph{best possible} regularity estimate for this and similar problems.

We now state in more detail the main results.  To elucidate the main idea we first showcase
the result on the prototypical $L^2$-type critical $H^{2-\gamma}$ space.  The following offers
a substantial improvement of Miura \cite{Mi06}  and Dong \cite{D10}.
To keep the paper self-contained, we give a bare-hand harmonic-analysis-free proof. The framework
we develop here can probably be applied to many other problems.
\begin{thm} \label{thm1}
Let $\nu=1$, $0<\gamma< 1$ and $\theta_0 \in H^{2-\gamma} $.  For any $0<\epsilon_0<1$, there exists $T=T(\gamma,\theta_0, \epsilon_0)>0$ and 
a unique solution $\theta \in C_t^0 H^{2-\gamma} \cap C_t^1 H^{1-\gamma} \cap L_t^2 H^{2-\frac{\gamma}2} ([0,T]\times \mathbb R^2)$ to \eqref{1}
such that $f(t,\cdot) =e^{\epsilon_0 tD^{\gamma} }\theta(t,\cdot) 
\in C_t^0 H^{2-\gamma} \cap L_t^2 H^{2-\frac{\gamma}2} ([0,T]\times \mathbb R^2)$ and
\begin{align*}
\sup_{0<t<T} \|f(t,\cdot) \|_{H^{2-\gamma} }^2 + \int_0^{T} \| f(t,\cdot) \|_{H^{2-\frac{\gamma}2}}^2 dt
\le C \| \theta_0\|_{H^{2-\gamma} }^2,
\end{align*}
where $C>0$ is a constant depending  on $(\gamma, \epsilon_0)$. 
\end{thm}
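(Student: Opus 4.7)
The plan is to work with the Gevrey-lifted unknown $f(t,x) := e^{\epsilon_0 t D^{\gamma}}\theta(t,x)$. Differentiating and substituting the original equation $\partial_t\theta=-u\cdot\nabla\theta-D^{\gamma}\theta$ yields
\begin{align*}
\partial_t f + (1-\epsilon_0)\,D^{\gamma} f = -\,e^{\epsilon_0 t D^{\gamma}}(u\cdot\nabla\theta),\qquad f(0,\cdot)=\theta_0.
\end{align*}
Since $1-\epsilon_0>0$, the linear part retains essentially the original dissipation rate, and the standard $H^{2-\gamma}$ energy identity reads
\begin{align*}
\tfrac12\tfrac{d}{dt}\|f\|_{H^{2-\gamma}}^{2} + (1-\epsilon_0)\|f\|_{H^{2-\gamma/2}}^{2} = -\bigl\langle f,\; e^{\epsilon_0 t D^{\gamma}}(u\cdot\nabla\theta)\bigr\rangle_{H^{2-\gamma}}.
\end{align*}
The aim is to bound the right-hand side by $C\|f\|_{H^{2-\gamma}}\,\|f\|_{H^{2-\gamma/2}}^{2}$, so that a continuity/bootstrap argument closes the a priori estimate on an interval $[0,T]$ with $T=T(\gamma,\epsilon_0,\|\theta_0\|_{H^{2-\gamma}})>0$.

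The key analytic ingredient that renders the exponential weight harmless is the \emph{subadditivity} of the symbol $|\xi|^{\gamma}$: for $0<\gamma\le 1$, concavity of $r\mapsto r^{\gamma}$ combined with $|\xi+\eta|\le|\xi|+|\eta|$ gives
\begin{align*}
e^{\epsilon_0 t|\xi+\eta|^{\gamma}}\;\le\; e^{\epsilon_0 t|\xi|^{\gamma}}\,e^{\epsilon_0 t|\eta|^{\gamma}}.
\end{align*}
Expressing the nonlinearity as a Fourier convolution and applying this pointwise inequality distributes the Gevrey weight across the two factors. Introducing the auxiliary lifted functions $\widehat F(\xi):=e^{\epsilon_0 t|\xi|^{\gamma}}|\widehat\theta(\xi)|$ and $\widehat U(\xi):=e^{\epsilon_0 t|\xi|^{\gamma}}|\widehat u(\xi)|$ (the Riesz multiplier $R^{\perp}$ commutes with the lift, being dimensionless), one obtains the pointwise Fourier domination
\begin{align*}
\bigl|\mathcal{F}\bigl(e^{\epsilon_0 t D^{\gamma}}(u\cdot\nabla\theta)\bigr)(\xi)\bigr|\;\le\;\int |\widehat U(\xi-\eta)|\,|\eta|\,|\widehat F(\eta)|\,d\eta.
\end{align*}
Because the Sobolev norms $\|\cdot\|_{H^{s}}$ see only the modulus of the Fourier transform, every quadratic Sobolev bound available for the \emph{plain} transport nonlinearity $u\cdot\nabla\theta$ transfers verbatim to its Gevrey-lifted counterpart with $\theta$ replaced by $F$, and crucially $\|F\|_{H^{s}}=\|f\|_{H^{s}}$ for every $s$.

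The remaining step is a bare-handed critical Sobolev product estimate of the form
\begin{align*}
\bigl|\langle g,\; u\cdot\nabla\theta\rangle_{H^{2-\gamma}}\bigr|\;\le\; C\,\|g\|_{H^{2-\gamma/2}}\,\|\theta\|_{H^{2-\gamma}}\,\|\theta\|_{H^{2-\gamma/2}},\qquad u=R^{\perp}\theta,
\end{align*}
which one establishes by writing the pairing as a trilinear Fourier integral, splitting the convolution domain into the low--high and high--low regimes $|\xi-\eta|\lesssim|\eta|$ versus $|\xi-\eta|\gtrsim|\eta|$, and exploiting the divergence-free identity $u\cdot\nabla\theta=\nabla\cdot(u\theta)$ to shift one full derivative onto the test function $g$. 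Combined with the preceding subadditivity reduction, this yields the desired quadratic bound on the energy-identity right-hand side, and Young's inequality together with a standard continuity argument then closes the a priori estimate on $[0,T]$. Existence is obtained via a frequency-truncation approximation (solve with smooth truncated data $P_{\le N}\theta_0$, where the classical theory applies, then pass to the limit using the uniform Gevrey bounds and Aubin--Lions compactness); uniqueness is proved in the subcritical Hilbert norm $H^{2-\gamma-\gamma/2}$ by essentially the same product estimate. The \emph{main obstacle} is this critical Sobolev bound at regularity $2-\gamma$: the subadditivity device is conceptually clean, but the sharp bare-handed trilinear estimate must be executed without any logarithmic loss, and this is the technical heart of the argument.
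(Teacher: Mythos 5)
Your subadditivity device is the right instinct for distributing the Gevrey weight, and it is indeed what lies behind the claimed harmlessness of the exponential factor. But there is a genuine gap in the passage from that pointwise Fourier majorization to the claimed trilinear bound: once you replace $\widehat{u},\widehat{\theta}$ by their moduli $\widehat{U},\widehat{F}$, you destroy the phase cancellation that the divergence-free structure provides in the dangerous ``low--high'' regime, and that cancellation is exactly what makes the estimate close at critical regularity. Concretely, the unweighted trilinear bound you propose,
\begin{align*}
\bigl|\langle g,\; u\cdot\nabla\theta\rangle_{H^{2-\gamma}}\bigr|\;\lesssim\; \|g\|_{H^{2-\gamma/2}}\,\|\theta\|_{H^{2-\gamma}}\,\|\theta\|_{H^{2-\gamma/2}},
\end{align*}
is \emph{false} as a statement about three independent functions with one derivative integrated by parts onto $g$. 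In the block where $\widehat{u}$ lives at low frequency and $\widehat{\theta},\widehat{g}$ at frequency $\sim 2^j$, integrating by parts gives $\langle D^{2s}\nabla g_j,\, u_{\ll j}\theta_j\rangle$ with $s=2-\gamma$, which costs $2s+1$ derivatives at $2^j$; one needs $2s+1=2(s+\tfrac{\gamma}{2})$, i.e. $\gamma=1$, so for $\gamma<1$ there is a loss of $2^{j(1-\gamma)}$. The only way to recoup this is to use that $g=\theta=f$ and pass to the commutator $[D^{s},u_{\ll j}\cdot\nabla]f_j$, which gains the missing derivative on $u$. But that step crucially compares two signed kernels ($|\xi|^{2s}(\xi-\eta)$ versus $|\xi-\eta|^{2s}\xi$ after symmetrizing $\xi\leftrightarrow\xi-\eta$), and the cancellation disappears the moment you replace $\widehat{\theta}(\eta)$ by $|\widehat{\theta}(\eta)|$.

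The paper resolves this precisely by symmetrizing the trilinear form \emph{first} (exploiting that the second and third arguments coincide), which produces the combined multiplier
\begin{align*}
(\xi-\eta)|\xi|^{2s}\,e^{-t(|\xi-\eta|^{\gamma}-|\xi|^{\gamma})}-\xi|\xi-\eta|^{2s}\,e^{-t(|\xi|^{\gamma}-|\xi-\eta|^{\gamma})},
\end{align*}
and only then estimates the difference of Gevrey weights. Lemma \ref{lem_phase_1} is exactly the tool used to show that, for $|\eta|\ll 2^{j}$, this difference is $O(2^{2js}|\eta|)$; the nonnegativity of the phase $\sigma(\xi,\eta)=|\xi|^{\gamma}+|\eta|^{\gamma}-|\xi+\eta|^{\gamma}$ (your subadditivity) enters as a lower bound $\sigma\gtrsim\min\{|\xi|^{\gamma},|\eta|^{\gamma}\}$ that keeps the admissible-function calculus under control, not as a free pass to decouple the two factors. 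So the subadditivity you use is part of the proof, but it is not sufficient by itself; the commutator must be carried through the Gevrey weight, and your plan skips that. A secondary but real issue: at critical regularity with large data, the quadratic bound $C\|f\|_{H^{2-\gamma}}\|f\|_{H^{2-\gamma/2}}^2$ cannot be absorbed into the dissipation directly; the paper handles this by splitting $f=P_{\le J_0}f+P_{>J_0}f$ and choosing $J_0$ so that the high-frequency tail of the data is small --- a step your ``standard continuity argument'' does not supply.
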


Our next result is devoted to the Besov case.  In particular, we resolve the problem left open in \cite{BMS15}, namely one
can push to the \textbf{optimal threshold} $\alpha=\gamma$. Moreover we cover the whole regime
$1\le p<\infty$.

\begin{thm} \label{thm2}
Let $\nu=1$, $0<\gamma<1$, $1\le p<\infty$ and $1\le q<\infty$. Assume the initial data
$\theta_0 \in B^{1+\frac 2p-\gamma}_{p,q}(\mathbb R^2)$. There exists $T=T(\gamma,\theta_0,p,q)>0$
and a unique solution $\theta \in C_t^0([0,T], B^{1+\frac 2p-\gamma}_{p,q})$ to \eqref{1}
such that $f(t,\cdot) =e^{\frac 12 t D^{\gamma}} \theta(t,\cdot) \in C_t^0([0,T], B^{1 +\frac 2p-\gamma}_{p,q})$
and
\begin{align*}
\sup_{0<t<T} \| e^{\frac 12 t D^{\gamma} } \theta(t,\cdot) \|_{B^{1+\frac 2p-\gamma}_{p,q} } 
\le C \|\theta_0\|_{B^{1+\frac 2p-\gamma}_{p,q}},
\end{align*}
where $C>0$ is a constant depending on $(\gamma, p,q)$.  
\end{thm}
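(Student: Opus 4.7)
\medskip
\noindent\textbf{Proof plan for Theorem \ref{thm2}.} The plan is to introduce the Gevrey-lifted unknown
\begin{equation*}
f(t,x) := e^{\frac{1}{2}tD^{\gamma}}\theta(t,x)
\end{equation*}
and derive a forced fractional heat equation for it. Using \eqref{1} together with the commutativity of $D^{\gamma}$ and $e^{\frac{1}{2}tD^{\gamma}}$, one gets
\begin{equation*}
\partial_t f + \tfrac12 D^{\gamma} f = -N_t(f,f), \qquad
N_t(F,G) := e^{\frac{1}{2}tD^{\gamma}}\Bigl( (e^{-\frac{1}{2}tD^{\gamma}} R^{\perp} F)\cdot\nabla(e^{-\frac{1}{2}tD^{\gamma}} G) \Bigr).
\end{equation*}
On the Fourier side $N_t$ carries the symbol of the usual SQG nonlinearity times the Gevrey weight
\begin{equation*}
K_t(\xi,\eta) := \exp\Bigl(\tfrac{t}{2}\bigl(|\xi|^{\gamma} - |\xi-\eta|^{\gamma} - |\eta|^{\gamma}\bigr)\Bigr).
\end{equation*}
The decisive observation, available precisely because $0<\gamma<1$, is the subadditivity $|\xi|^{\gamma} \le |\xi-\eta|^{\gamma} + |\eta|^{\gamma}$, which forces the pointwise bound $|K_t(\xi,\eta)| \le 1$ for all $t\ge 0$. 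Thus the Gevrey exponential passes harmlessly through every bilinear frequency interaction.

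With $N_t$ in hand I would solve the integral equation
\begin{equation*}
f(t) = e^{-\frac{t}{2}D^{\gamma}}\theta_0 - \int_0^t e^{-\frac{t-s}{2}D^{\gamma}} N_s(f,f)(s)\,ds
\end{equation*}
by Picard iteration in the Chemin--Lerner space
\begin{equation*}
E_T := \widetilde L^{\infty}_T B^{1-\gamma+\frac{2}{p}}_{p,q}\;\cap\;\widetilde L^{1}_T B^{1+\frac{2}{p}}_{p,q}.
\end{equation*}
The linear piece is handled by the usual maximal regularity for the fractional heat semigroup; since $q<\infty$, Schwartz functions are dense in $B^{1-\gamma+\frac{2}{p}}_{p,q}$, so $\|e^{-\frac{\cdot}{2}D^{\gamma}}\theta_0\|_{\widetilde L^{1}_T B^{1+\frac{2}{p}}_{p,q}}\to 0$ as $T\to 0$, which is what supplies smallness for the fixed-point contraction. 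For the bilinear estimate I would Littlewood--Paley localize $N_t(F,G)$ and apply Bony's paraproduct decomposition to the product $(R^{\perp}\tilde F)\cdot\nabla\tilde G$. In each piece one rewrites $K_t = e^{\frac{t}{2}|\xi|^{\gamma}}\cdot e^{-\frac{t}{2}|\xi-\eta|^{\gamma}}\cdot e^{-\frac{t}{2}|\eta|^{\gamma}}$; the bound $|K_t|\le 1$ coupled with the fact that $e^{-\tau|\cdot|^{\gamma}}$ is the Fourier transform of a probability measure (hence defines a Fourier multiplier of $L^p$-norm $1$ for every $1\le p\le\infty$ and $0<\gamma\le 2$) lets me absorb the Gevrey weight at essentially zero cost. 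What remains is the classical critical Besov bilinear estimate for $u\cdot\nabla\theta$ at regularity $1-\gamma+\frac{2}{p}$, which yields
\begin{equation*}
\|N_{\cdot}(F,G)\|_{\widetilde L^{1}_T B^{1-\gamma+\frac{2}{p}}_{p,q}} \lesssim \|F\|_{E_T}\|G\|_{E_T}.
\end{equation*}
A Banach fixed point then produces the unique $f\in E_T$, and $\theta = e^{-\frac{t}{2}D^{\gamma}}f$ is the desired solution.

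The main obstacle is pushing the bilinear estimate through the full range $1\le p,q<\infty$, and three technical points are in my way. First, the remainder paraproduct $R(u,\nabla\theta) = \sum_{|j-k|\le 2}\Delta_j u\,\nabla\Delta_k\theta$ has comparable high frequencies in both factors but its output frequency can be arbitrarily small, so I must exploit the divergence form $u\cdot\nabla\theta = \nabla\cdot(u\theta)$ together with $\nabla\cdot u = 0$ and Bernstein's inequality to recover the missing derivative on low output frequencies. Second, the endpoint $p=1$ requires care because Riesz transforms are unbounded on $L^1$; this is remedied by only applying $R^{\perp}$ to frequency-localized pieces, on which it is a bounded Fourier multiplier. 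Third, one has to verify that the factorization of $K_t$ into one-variable heat multipliers is compatible with the paraproduct frequency restrictions, but this is automatic because the restrictions are encoded only in the supports of the Littlewood--Paley cutoffs and the heat multipliers are simply pointwise factors. The factor $\tfrac12$ in the Gevrey exponent plays no essential role --- any $\epsilon_0 < 1$ works by exactly the same argument, with constants depending on $\epsilon_0$.
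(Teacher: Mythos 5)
Your scheme starts exactly as the paper does (lift by $e^{\frac{1}{2}tD^{\gamma}}$, read off the bilinear Gevrey symbol $K_t$, notice $|K_t|\le 1$ from subadditivity for $0<\gamma<1$), and several of your technical observations are correct — the divergence form for the high-high remainder, the Riesz-transform care at $p=1$, and smallness via density. But the step you call ``decisive'' is where the argument has a genuine gap, and it is precisely the point that separates this theorem from the earlier result of Biswas--Martinez--Silva at $\alpha<\gamma$.

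A pointwise bound $|K_t(\xi,\eta)|\le 1$ on a bilinear symbol does not by itself yield boundedness of the corresponding bilinear operator on $L^{p_1}\times L^{p_2}$; one needs derivative bounds (a Coifman--Meyer / Marcinkiewicz condition). Your attempted remedy --- factor $K_t=e^{\frac{t}{2}|\xi|^{\gamma}}\cdot e^{-\frac{t}{2}|\xi-\eta|^{\gamma}}\cdot e^{-\frac{t}{2}|\eta|^{\gamma}}$ and apply each one-variable multiplier bound separately --- fails quantitatively. Localize the output to $|\xi|\sim 2^j$ and take the low-high paraproduct piece, where one input is also at $\sim 2^j$. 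After rescaling, with $T=\tfrac{t}{2}2^{j\gamma}$, the $L^p\to L^p$ multiplier norm of $e^{\frac{t}{2}D^{\gamma}}P_j$ grows like $e^{c_+T}$ with $c_+=(\max_{\operatorname{supp}\phi}|\xi|^{\gamma})$, while that of $e^{-\frac{t}{2}D^{\gamma}}P_j$ is only $\lesssim e^{-c_-T}$ with $c_-=(\min_{\operatorname{supp}\phi}|\xi|^{\gamma})$, and $c_+>c_-$. The product of the three separate bounds therefore grows like $e^{(c_+-c_-)T}$, which is unbounded as $T\to\infty$. The cancellation encoded in $|K_t|\le 1$ is exact only between the values of $|\xi|^{\gamma}$ and $|\eta|^{\gamma}$ at the \emph{same} frequency point; the moment you treat the three factors as independent multipliers you lose it. This is not a minor technicality: if the factorization did work, the $\alpha=\gamma$ endpoint would indeed be ``essentially zero cost,'' which is exactly what it is not.

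What is actually needed, and what the paper supplies, is a Marcinkiewicz-type symbol estimate on the \emph{unfactored} kernel: with $\sigma(\xi,\eta)=|\xi|^{\gamma}+|\eta|^{\gamma}-|\xi+\eta|^{\gamma}$, one proves (Lemma~\ref{lem_phase_1}, Remark~\ref{rem_lem_phase_1}) that for $|\xi|\lesssim 1\sim|\eta|\sim|\xi+\eta|$ and admissible $F$ (e.g.\ $F(x)=e^{-x}$),
\begin{align*}
\bigl|\partial_{\xi}^{\alpha}\partial_{\eta}^{\beta}\bigl(F(t\sigma)\bigr)\bigr|\lesssim_{\alpha,\beta,\gamma}|\xi|^{-|\alpha|}\qquad\text{uniformly in }t>0,
\end{align*}
relying crucially on the nondegeneracy $\sigma\gtrsim\min\{|\xi|^{\gamma},|\eta|^{\gamma}\}$ which holds for $0<\gamma<1$ but fails at $\gamma=1$. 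This feeds into a bilinear multiplier theorem (Lemma~\ref{lem_b0}) to control the localized pieces. In addition, the paper works not with the raw nonlinearity but with the commutator $B_j(f,g)=[P_je^{tD^{\gamma}},\,e^{-tD^{\gamma}}R^{\perp}f]\cdot\nabla e^{-tD^{\gamma}}g$, so that the transport part drops from the dyadic $L^p$-energy estimate via divergence-freeness (Lemma~\ref{lem_heat}); the symbol one then has to estimate is the \emph{difference} of two exponentials, whose first-order cancellation (Taylor expansion of $F(1)-F(0)$ in Lemma~\ref{lem_b1b}) buys the extra $t\cdot2^{j\gamma}$ that you would otherwise have to pay. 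Your Picard-iteration framework is a reasonable alternative to the paper's energy method, but as written the bilinear estimate in the middle of it is unproved, and the ``classical critical Besov bilinear estimate'' you invoke does not see the Gevrey weight at all. The missing ingredient is the uniform-in-$t$ symbol calculus for $F(t\sigma)$, and without it the argument does not close.
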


The techniques introduced in this paper may apply to many other similar models  such as Burgers equations, generalized
SQG models, and Chemotaxi equations (cf. recent very interesting
works \cite{JKM21, Iwa20, CCCGW12, Z18}). 
Also there are some promising evidences that a set of nontrivial multiplier estimates can be generalized
from our work.  All these will be explored elsewhere.
The rest of this paper is organized as follows. In Section 2 we collect some preliminary materials
along with the needed proofs.  In Section 3 we give the nonlinear estimates for the $H^{2-\gamma}$ case.  In Section 4 we give the proof
of Theorem \ref{thm1}. In Section 6 we give the proof of Theorem \ref{thm2}.
\section{Notation and preliminaries}

In this section we introduce some basic notation used in this paper and collect several useful lemmas.

We define the sign function $\operatorname{sgn}(x)$ on $\mathbb R$ as:
\begin{align*}
\operatorname{sgn}(x)= \begin{cases}
1, \quad x>0, \\
-1, \quad x<0, \\
0, \quad x=0.
\end{cases}
\end{align*}

For any two quantities $X$ and $Y$, we denote $X \lesssim Y$ if
$X \le C Y$ for some constant $C>0$. The dependence of the constant $C$ on
other parameters or constants are usually clear from the context and
we will often suppress  this dependence. We denote $X \lesssim_{Z_1,\cdots, Z_N} Y$ if the implied
constant depends on the quantities $Z_1,\cdots, Z_N$.  We denote $X\sim Y$ if $X\lesssim Y$ and $Y \lesssim X$. 

For any quantity $X$, we will denote by $X+$ the quantity $X+\epsilon$ for some sufficiently small
$\epsilon>0$. The smallness of such $\epsilon$ is usually clear from the context. The notation $X-$ is similarly defined. 
This notation is very convenient for various exponents in interpolation inequalities.
For example instead of writing
\begin{align*}
\| f g \|_{L^1(\mathbb R)} \le \| f \|_{L^{\frac {2}{1+\epsilon} } (\mathbb R)}
 \| g \|_{L^{\frac 2 {1-\epsilon} } (\mathbb R)},
 \end{align*}
 we shall write
 \begin{align} \label{eqx+x-}
 \|f g \|_{L^1(\mathbb R)} \le \| f \|_{L^{2-}(\mathbb R)} \| g \|_{L^{2+}(\mathbb R)}.
 \end{align}

For any two quantities $X$ and $Y$, we shall denote $X\ll Y$ if
$X \le c Y$ for some sufficiently small constant $c$. The smallness of the constant $c$ (and its dependence on other parameters) is
usually clear from the context. The notation $X\gg Y$ is similarly defined. Note that
our use of $\ll$ and $\gg$ here is \emph{different} from the usual Vinogradov notation
in number theory or asymptotic analysis. 

We shall adopt the following notation for Fourier transform on $\mathbb R^n$:
\begin{align*}
&(\mathcal F f )(\xi) = \hat f(\xi) = \int_{\mathbb R^n} f(x) e^{-i x \cdot \xi} dx, \\
& (\mathcal F^{-1} g)(x) = \frac 1 {(2\pi)^n} \int_{\mathbb R^n} g(\xi) e^{i x\cdot \xi} d\xi.
\end{align*}
Similar notation will be adopted for the Fourier transform of tempered distributions. 
For real-valued Schwartz functions $f:\; \mathbb R^n \to \mathbb R$, $g:\, \mathbb R^n \to \mathbb R$,
the usual Plancherel takes the form (note that $\overline{\hat g(\xi)}=\hat g(-\xi)$)
\begin{align*}
\int_{\mathbb R^n} f(x) g(x) dx = \frac 1 {(2\pi)^n}
\int_{\mathbb R^n} \hat f(\xi) \hat g(-\xi) d\xi.
\end{align*}

We shall denote for $s>0$ the fractional Laplacian $D^s = (-\Delta)^{s/2} = |\nabla|^s$ as the operator
corresponding to the symbol $|\xi|^s$. For any $0\le r \in \mathbb R$, the Sobolev norm $\|f\|_{\dot H^r}$ is 
defined as
\begin{align*}
\| f \|_{\dot H^r} = \| D^r f \|_2=\| (-\Delta)^{r/2} f \|_2.
\end{align*}

We will need to use the Littlewood--Paley (LP) frequency projection
operators. To fix the notation, let $\phi_0 \in
C_c^\infty(\mathbb{R}^n )$ and satisfy
\begin{equation}\nonumber
0 \leq \phi_0 \leq 1,\quad \phi_0(\xi) = 1\ {\text{ for}}\ |\xi| \leq
1,\quad \phi_0(\xi) = 0\ {\text{ for}}\ |\xi| \geq 7/6.
\end{equation}
Let $\phi(\xi):= \phi_0(\xi) - \phi_0(2\xi)$ which is supported in $\frac 12 \le |\xi| \le \frac 76$.
For any $f \in \mathcal S^{\prime}(\mathbb R^n)$, $j \in \mathbb Z$, define
\begin{align*}
 &\widehat{P_{\le j} f} (\xi) = \phi_0(2^{-j} \xi) \hat f(\xi), \quad P_{>j} f=f-P_{\le j} f,\\
 &\widehat{P_j f} (\xi) = \phi(2^{-j} \xi) \hat f(\xi), \qquad \xi \in \mathbb R^n.
\end{align*}
Sometimes for simplicity we write $f_j = P_j f$, $f_{\le j} = P_{\le j} f$, and $f_{[a,b]}=\sum_{a\le j \le b} f_j$.
Note that by using the support property of $\phi$, we have $P_j P_{j^{\prime}} =0$ whenever $|j-j^{\prime}|>1$.
For $f \in \mathcal S^{\prime}$ with $\lim_{j\to -\infty} P_{\le j} f =0$, one has the identity 
\begin{align*}
f= \sum_{j \in \mathbb Z} f_j , \quad ( \text{in $\mathcal S^{\prime}$} )
\end{align*}
and for general tempered distributions the convergence (for low frequencies) should be taken
as modulo polynomials.

The Bony paraproduct for a pair of functions
$f,g\in \mathcal S(\mathbb R^n) $ take the form
\begin{align*}
f g = \sum_{i \in \mathbb Z} f_i  g_{[i-1,i+1]} + \sum_{i \in \mathbb Z} f_i g_{\le i-2} + \sum_{i \in \mathbb Z}
g_i f_{\le i-2}.
\end{align*}

For $s\in \mathbb R$, $1\le p,q \le \infty$,  the Besov norm $\| \cdot \|_{B^s_{p,q}}$ is given by
\begin{align*}
\| f \|_{B^s_{p,q}} =
\begin{cases}
 \| P_{\le 0} f \|_p + ( \sum_{k=1}^{\infty} 2^{sqk} \| P_k f \|_p^q )^{1/q}, \quad \text{$q<\infty$}; \\
 \| P_{\le 0} f \|_p + \sup_{k\ge 1} 2^{sk} \|P_k f \|_p, \quad \text{$q=\infty$}.
 \end{cases}
\end{align*}
The Besov space $B^s_{p,q}$ is then simply
\begin{align*}
B^s_{p,q} = \biggl\{ f: \; f\in \mathcal S^{\prime}, \, \| f\|_{B^s_{p,q}} <\infty \biggr\}.
\end{align*}
Note that Schwartz functions are dense in $B^s_{p,q}$ when $1\le p,q<\infty$. 

In the following lemma we give refined heat flow estimate and frequency localized Bernstein
inequalities for the fractional Laplacian $|\nabla|^{\gamma}$, $0<\gamma<2$. Note that for
$\gamma>2$ and $p\ne 2$ there are counterexamples to the frequency
 Bernstein inequalities (cf.  Li and Sire \cite{LY23}). 

\begin{lem}[Refined heat flow estimate and Bernstein inequality, case $0<\gamma<2$] \label{lem_heat}
Let the dimension $n\ge 1$. Let $0<\gamma<2$ and $1\le q\le \infty$. Then for any
$f\in L^q(\mathbb R^n)$,  and any $j \in \mathbb Z$, we have
\begin{align} \label{lem_heat_e0}
\| e^{-t |\nabla|^{\gamma}} P_j f \|_q \le e^{-c_1 t 2^{j\gamma} } \| P_j f \|_q, \quad \forall\, t\ge 0,
\end{align}
where $c_1>0$ is a constant depending only on ($\gamma$, $n$). 
For $0<\gamma<2$, $1\le q<\infty$, we have
\begin{align}
&\int_{\mathbb R^n} (|\nabla|^{\gamma} P_j f) |P_j f|^{q-2} P_j f  dx \ge c_2 2^{j\gamma} \| P_j f \|_q^q, \quad \text{if
$1<q<\infty$}; \label{qnot1}\\
&\int_{\mathbb R^n} (|\nabla|^{\gamma} P_j f) \operatorname{sgn} (P_j f ) dx \ge c_2 2^{j\gamma} \|P_j f \|_1, \label{q=1}
\quad \text{if $q=1$}, 
\end{align}
where $c_2>0$ depends only on ($\gamma$, $n$). 

The $q=\infty$ formulation of \eqref{qnot1} is as follows. Let $0<\gamma<2$.  For any 
$f \in L^{\infty}(\mathbb R^n)$, if $j \in \mathbb Z$ and 
$|(P_j f)(x_0)| = \| P_j f \|_{\infty}$,  then we have
\begin{align} \label{lem_heat_e4}
\operatorname{sgn}( P_j f(x_0) ) \cdot (|\nabla|^{\gamma} P_j f)(x_0)
\ge c_3 2^{j\gamma} \| P_j f \|_{\infty},
\end{align}
where $c_3>0$ depends only on ($\gamma$, $n$). 

\end{lem}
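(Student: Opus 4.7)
The three families of inequalities are tightly linked, so my plan is to establish the refined heat-flow bound \eqref{lem_heat_e0} first via a direct Fourier-multiplier analysis, and then to deduce \eqref{qnot1}--\eqref{lem_heat_e4} from it through a semigroup differentiation trick.

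For \eqref{lem_heat_e0}, introduce an auxiliary cutoff $\tilde\phi\in C_c^\infty(\mathbb R^n)$ with $\tilde\phi\equiv 1$ on $\operatorname{supp}\phi$ and $\operatorname{supp}\tilde\phi\subset\{1/4\le|\eta|\le 3/2\}$. Since $\phi\tilde\phi=\phi$, one has $P_j f=\mathcal F^{-1}[\tilde\phi(2^{-j}\cdot)]\ast P_j f$, and hence
\begin{align*}
e^{-t|\nabla|^\gamma}P_j f=h_{t,j}\ast P_j f, \qquad h_{t,j}:=\mathcal F^{-1}\bigl[e^{-t|\xi|^\gamma}\tilde\phi(2^{-j}\xi)\bigr].
\end{align*}
By Young's inequality it suffices to show $\|h_{t,j}\|_1\lesssim e^{-c_1 t 2^{j\gamma}}$. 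The dilation $\xi=2^j\eta$ gives $\|h_{t,j}\|_1=\|h_s\|_1$ with $s=t\,2^{j\gamma}$ and $h_s=\mathcal F^{-1}[e^{-s|\eta|^\gamma}\tilde\phi(\eta)]$. On $\operatorname{supp}\tilde\phi$ one has $|\eta|^\gamma\ge(1/4)^\gamma$, so extracting $e^{-c_1 s}$ for any $c_1<(1/4)^\gamma$ leaves the multiplier $e^{-s(|\eta|^\gamma-c_1)}\tilde\phi(\eta)$, whose Schwartz seminorms are uniformly bounded in $s\ge 0$; repeated integration by parts in $\eta$ then yields $\|h_s\|_1\lesssim e^{-c_1 s}$, and a mild rescaling of $c_1$ absorbs the remaining constant.

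For the Bernstein-type inequalities, set $g=P_j f$ and use the identity
\begin{align*}
\int(|\nabla|^\gamma g)\,|g|^{q-2}g\,dx=\lim_{t\to 0^+}\frac{1}{t}\left(\|g\|_q^q-\int(e^{-t|\nabla|^\gamma}g)\,|g|^{q-2}g\,dx\right).
\end{align*}
For $1<q<\infty$, H\"older's inequality and \eqref{lem_heat_e0} give
\begin{align*}
\int(e^{-t|\nabla|^\gamma}g)\,|g|^{q-2}g\,dx\le\|e^{-t|\nabla|^\gamma}g\|_q\,\|g\|_q^{q-1}\le e^{-c_1 t 2^{j\gamma}}\|g\|_q^q,
\end{align*}
so the limit is at least $c_1 2^{j\gamma}\|g\|_q^q$, proving \eqref{qnot1}. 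The case \eqref{q=1} is identical with $|g|^{q-2}g$ replaced by $\operatorname{sgn}(g)$ and H\"older used as $\bigl|\int(e^{-t|\nabla|^\gamma}g)\operatorname{sgn}(g)\bigr|\le\|e^{-t|\nabla|^\gamma}g\|_1$. For \eqref{lem_heat_e4}, evaluate the pointwise analogue at a maximizer $x_0$ of $|g|$, using $\operatorname{sgn}(g(x_0))\cdot(e^{-t|\nabla|^\gamma}g)(x_0)\le\|e^{-t|\nabla|^\gamma}g\|_\infty\le e^{-c_1 t 2^{j\gamma}}\|g\|_\infty$ and differentiating at $t=0^+$.

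The principal obstacle is Step~1: securing the exponential $L^1$-decay of the frequency-localized fractional heat kernel with the stated rate and with no extraneous multiplicative constant surviving into \eqref{lem_heat_e0}. Once this is in hand, the semigroup differentiation proceeds mechanically and covers $q\in[1,\infty]$ in a unified way.
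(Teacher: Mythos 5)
Your derivation of \eqref{qnot1}, \eqref{q=1}, and \eqref{lem_heat_e4} from \eqref{lem_heat_e0} by differentiating the semigroup at $t=0^+$ is correct, and in fact coincides with the paper's first proof of \eqref{lem_heat_e4}; running that argument uniformly in $q\in[1,\infty]$ is a clean way to organize the lemma. (The paper instead cites \cite{L13} for \eqref{lem_heat_e0} and \eqref{qnot1} jointly, obtains \eqref{q=1} by sending $q\to 1$ in \eqref{qnot1}, and only uses the semigroup-differentiation trick for \eqref{lem_heat_e4}; but your route is valid provided \eqref{lem_heat_e0} holds with constant exactly $1$ in front of the exponential.)

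The gap is in your Step 1, and it is precisely where you say the ``principal obstacle'' lies: your proposed resolution of that obstacle does not work. Young's inequality together with the kernel estimate gives $\|h_s\|_1\le C\,e^{-c_1 s}$, and the constant $C$ is strictly larger than $1$: at $s=0$ one has $\|h_0\|_1=\|\mathcal F^{-1}\tilde\phi\|_1$, which exceeds $\|\tilde\phi\|_\infty=1$ strictly because $\tilde\phi(0)=0$ forces $\mathcal F^{-1}\tilde\phi$ to change sign. No rescaling of $c_1$ can absorb a constant $C>1$, since $C e^{-c_1 s}\le e^{-c_1' s}$ fails for $s$ near $0$ for every $c_1'>0$. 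This is not cosmetic downstream: with $C>1$ your H\"older step gives $\int(e^{-t|\nabla|^\gamma}g)|g|^{q-2}g\le C e^{-c_1 t 2^{j\gamma}}\|g\|_q^q$, so the difference quotient $\frac1t\bigl(\|g\|_q^q-\int(e^{-t|\nabla|^\gamma}g)|g|^{q-2}g\bigr)$ has numerator bounded below only by $(1-Ce^{-c_1 t 2^{j\gamma}})\|g\|_q^q$, which is \emph{negative} for small $t$ and sends the limit to $-\infty$ rather than to $c_1 2^{j\gamma}\|g\|_q^q$. The paper's route (via \cite{L13}) is genuinely different here: after factoring out $e^{-c_1 t 2^{j\gamma}}$, one replaces the residual truncated symbol on the dyadic annulus by a negative-definite (L\'evy) exponent agreeing with $|\xi|^\gamma - c_1 2^{j\gamma}$ there; the resulting kernel is a probability density, so its $L^1$-norm is exactly $1$ and no spurious constant appears. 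Producing that matching L\'evy exponent is the real mathematical content; Schwartz-seminorm bookkeeping on a smooth cutoff multiplier will never yield a constant-$1$ convolution bound, so you need either that perturbation argument or some other mechanism to sharpen the constant before the rest of your proof can close.
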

\begin{rem*}
For $1<q<\infty$ the first two inequalities also hold for $\gamma=2$, one can see Proposition
\ref{prop2.5} and Proposition \ref{prop2.7} below. On the other hand, the inequality \eqref{q=1}
does not hold for $\gamma=2$. One can construct a counterexample in dimension $n=1$ as
follows. Take $g(x) = \frac 14 (3 \sin x -\sin 3x) = (\sin x)^3$ which only has zeros of third order.
Take $h(x) $ with $\hat h$ compactly supported in $|\xi| \ll 1$ and $h(x)>0$ for all $x$. Set
\begin{align*}
f(x) = g(x) h(x)
\end{align*}
which obviously has frequency localized to $|\xi| \sim 1$ and have same zeros as $g(x)$. Easy to check that $\|f\|_1 \sim 1$ but
\begin{align*}
\int_{\mathbb R} f^{\prime\prime}(x) \operatorname{sgn} (f(x) ) dx  = 0.
\end{align*}
\end{rem*}

\begin{rem} \label{rem2.2_1}
For $\gamma>0$ sufficiently small, one can give a direct proof for $1\le q <\infty$ as follows. WLOG consider $g=P_1 f$
with $\| g \|_q =1$, and let
\begin{align*}
I(\gamma) = \int_{\mathbb R^n} (|\nabla|^{\gamma} g ) |g|^{q-2} g dx.
\end{align*}
One can then obtain
\begin{align*}
I(\gamma)  -I(0) = \int_{\mathbb R^n} (\int_0^{\gamma}  T_{s } g  ds)  |g|^{q-2} g dx,
\quad \widehat{T_{s} g}(\xi) =s (|\xi|^{s} \log |\xi| ) \hat g (\xi).
\end{align*}
Since $g$ has Fourier support localized in $\{ |\xi| \sim 1 \}$, one can obtain uniformly
in $0<s\le 1$,
\begin{align*}
\|T_s g\|_q \lesssim_{n}  \|g \|_q =1.
\end{align*}
Note that $I(0)=1$.
Thus for $\gamma<\gamma_0(n)$ sufficiently small one must have $\frac 12 \le I(\gamma)\le \frac 32$. 
\end{rem}
\begin{rem*}
The inequality \eqref{lem_heat_e4} was obtained by Wang-Zhang \cite{WZCMP11} by an elegant
contradiction argument under
the assumption that $f\in C_0(\mathbb R^n)$ (i.e. vanishing at infinity) and $f$ is frequency localized
to a dyadic annulus.  Here we only assume $f \in L^{\infty}$ and is frequency localized. This will naturally include periodic functions and similar ones as special cases. Moreover we provide
two different proofs. The second proof is self-contained and seems quite short. 
\end{rem*}
\begin{proof}[Proof of Lemma \ref{lem_heat}]
For the first inequality and \eqref{qnot1}, see \cite{L13} for a proof using an idea of perturbation of
the L\'evy semigroup. Since the constant $c_2>0$ depends only on $(\gamma,n)$, the inequality \eqref{q=1} can
be obtained from \eqref{qnot1} by taking the limit $q\to 1$. (Note
that since $f_j = P_j f \in L^1$ and has compact Fourier support, $f_j$
 can be extended to be an entire function on $\mathbb C^n$ and
its zeros must be isolated.)

Finally for \eqref{lem_heat_e4} we give two proofs. With no loss we can assume $j=1$ and write
$f=P_1 f$. By using translation  we may also assume $x_0=0$. With no loss we assume $\|f\|_{\infty}=
f(x_0)=1$. 

The first proof is to use \eqref{lem_heat_e0} which yields
\begin{align*}
(e^{-t |\nabla|^{\gamma}} f )(0) \le e^{-c t},
\end{align*}
where $c>0$ depends only on ($\gamma$, $n$). Then since $f=P_1 f$ is smooth and
\begin{align*}
f - e^{-t |\nabla|^{\gamma} } f = \int_0^t e^{-s |\nabla|^{\gamma} }  |\nabla|^{\gamma} f
ds = t  |\nabla|^{\gamma} f - \int_0^t ( \int_0^s  e^{-\tau |\nabla|^{\gamma} } 
{ |\nabla|^{2\gamma} f } d \tau) ds.
\end{align*}
One can then divide by $t\to 0$ and obtain
\begin{align*}
 (|\nabla|^{\gamma} f) (0) \ge c.
 \end{align*}

The second proof is more direct. We note that $\int \psi(y) dy =0$ where $\psi$ corresponds to
the projection operator $P_1$.  Since $1= (P_1 f)(0)$, we obtain
\begin{align*}
1 = \int \psi (y) ( f(y) -1) dy \le \sup_{y \ne 0}( |\psi(y) | \cdot |y|^{n+\gamma} )
\cdot \int_{\mathbb R^n} \frac {1- f(y)} { |y|^{n+\gamma} } dy \lesssim_{\gamma,n}
\int_{\mathbb R^n} \frac {1-f(y) } { |y|^{n+\gamma} } dy.
\end{align*}
Thus
\begin{align*}
( |\nabla|^{\gamma} f)(0) \gtrsim_{\gamma,n } 1.
\end{align*}
\end{proof}

In what follows we will give a different proof of \eqref{qnot1} (and some stronger versions,
see Proposition \ref{prop2.5}
and Proposition \eqref{prop2.7}) and some equivalent characterization.
For the sake of understanding (and keeping track of constants) we provide some details.

\begin{lem} \label{lem2.2_equiv}
Let $0<s<1$. Then for any $g \in L^2(\mathbb R^n)$ with ${\hat g}$
being compactly supported, we have
\begin{align*}
\frac 12 C_{2s,n} \cdot \int_{\mathbb R^n \times \mathbb R^n} \frac { |g(x)-g(y)|^2} {|x-y|^{n+2s} } dx dy
= \| \,|\nabla|^s g \|^2_2,
\end{align*}
where $C_{2s,n}$ is a constant corresponding to the fractional Laplacian $|\nabla|^{2s}$ 
having the asymptotics $C_{2s,n} \sim_{n} s(1-s)$ for $0<s<1$. 
As a result, if $g \in L^2(\mathbb R^n)$ and $\|\, |\nabla|^s g \|_2<\infty$, then
\begin{align*}
s(1-s)\cdot \int_{\mathbb R^n \times \mathbb R^n} \frac { |g(x)-g(y)|^2} {|x-y|^{n+2s} } dx dy
\sim_n  \| \,|\nabla|^s g \|^2_2.
\end{align*}
Similarly if $g \in L^2(\mathbb R^n)$ and  $\int \frac{|g(x)-g(y)|^2} {|x-y|^{n+2s} } dx dy<\infty$,
then
\begin{align*}
\|\, |\nabla|^s g \|_2^2 \sim_n s(1-s)\cdot \int_{\mathbb R^n \times \mathbb R^n} \frac { |g(x)-g(y)|^2} {|x-y|^{n+2s} } dx dy.
\end{align*}

\end{lem}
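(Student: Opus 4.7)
\textbf{Proof proposal for Lemma \ref{lem2.2_equiv}.} The plan is the classical Fourier-side computation, but with care to extract the $s(1-s)$ behavior of the constant. First I would assume $g$ has compactly supported Fourier transform so that every integral in sight is finite; this will let me freely interchange orders of integration and apply Plancherel. Changing variables $y = x - z$ in the double integral gives
\begin{align*}
\int_{\R^n \times \R^n} \frac{|g(x)-g(y)|^2}{|x-y|^{n+2s}} \, dx\, dy = \int_{\R^n} \frac{1}{|z|^{n+2s}} \int_{\R^n} |g(x) - g(x-z)|^2 \, dx\, dz.
\end{align*}
Plancherel in $x$ for each fixed $z$ turns the inner integral into $(2\pi)^{-n}\int |1 - e^{-i\xi\cdot z}|^2 |\hat g(\xi)|^2\,d\xi$. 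Fubini then moves the $z$-integral inside, producing the multiplier
\begin{align*}
m_s(\xi) = \int_{\R^n} \frac{|1-e^{-i\xi\cdot z}|^2}{|z|^{n+2s}} \, dz.
\end{align*}

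The next step is to show $m_s(\xi) = K_{n,s} |\xi|^{2s}$ for a constant $K_{n,s}$ depending only on $n$ and $s$. This follows from two symmetries: rotational invariance of the integral in $z$ reduces to $\xi = |\xi| e_1$, and the rescaling $z \mapsto z/|\xi|$ pulls out $|\xi|^{2s}$. Combining with $\| |\nabla|^s g\|_2^2 = (2\pi)^{-n}\int |\xi|^{2s}|\hat g(\xi)|^2 d\xi$ yields the claimed identity, with the normalizing constant forced to be $C_{2s,n} = 2/K_{n,s}$.

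The main obstacle is verifying $C_{2s,n} \sim_n s(1-s)$. I would split $m_s(e_1) = \int_{|z|\le 1} + \int_{|z|>1}$. On the outer piece, $|1-e^{-iz_1}|^2 \le 4$ is uniformly bounded away from the origin, so the integral behaves like $\int_1^\infty r^{-1-2s} dr \sim 1/(2s)$, finite for $s>0$ and blowing up like $1/s$ as $s\to 0^+$. On the inner piece, Taylor expansion gives $|1-e^{-iz_1}|^2 = z_1^2 + O(|z|^4)$, so the integral behaves like $\int_0^1 r^{1-2s} dr \sim 1/(2(1-s))$, finite for $s<1$ and blowing up like $1/(1-s)$ as $s\to 1^-$. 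In both regimes the constants in front depend only on $n$ (through the angular integral on the unit sphere), giving $K_{n,s} \sim_n (s(1-s))^{-1}$ and hence the desired asymptotics for $C_{2s,n}$.

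Finally, for the second and third assertions I would drop the compact Fourier support assumption by a standard truncation argument: apply the identity to $g_N = P_{\le N} g$, which has compact Fourier support and is dominated by $g$ in both norms. Since the Gagliardo double integral and $\| |\nabla|^s g\|_2^2$ are both of the form $\int F_N$ with $F_N$ monotonically increasing in $N$ (after using $|g_N(x)-g_N(y)|^2 \to |g(x)-g(y)|^2$ pointwise a.e., and similarly for the Fourier side), monotone convergence allows passage to the limit and produces the two-sided equivalence in the stated form. The finiteness hypotheses on each side ensure the limit is finite and the identity (up to the $s(1-s)$ comparability) propagates to all $g \in L^2$ of interest.
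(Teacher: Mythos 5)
Your argument takes a genuinely different route from the paper's. You work entirely on the Fourier side: change variables, apply Plancherel for each fixed translation $z$, use Tonelli to interchange, and identify the multiplier $m_s(\xi)=K_{n,s}|\xi|^{2s}$ by rotation and dilation. The paper instead stays in physical space, writing $\| |\nabla|^s g\|_2^2=\int (|\nabla|^{2s}g)\,g$ and plugging in the principal-value singular-integral representation of $|\nabla|^{2s}$, then symmetrizing in $x$ and $y$; this requires a careful dominated-convergence argument (with maximal-function bounds) to justify removing the $\epsilon$-truncation before symmetrizing. Your multiplier computation sidesteps all of that, and your split of $m_s(e_1)$ into $|z|\le 1$ and $|z|>1$ gives the $s(1-s)$ asymptotics cleanly and correctly. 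For the extensions to general $g\in L^2$, the paper uses Fatou in one direction and a Jensen/convolution bound in the other; your approach, done properly, actually gives more for less — see below.

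However, your final paragraph contains a genuine error. You assert that both $|\xi|^{2s}|\widehat{P_{\le N}g}(\xi)|^2$ and $|P_{\le N}g(x)-P_{\le N}g(y)|^2/|x-y|^{n+2s}$ increase monotonically in $N$, so monotone convergence applies to both sides. The Fourier-side claim is fine (the multiplier $\phi_0(2^{-N}\xi)^2$ increases pointwise to $1$), but the physical-space claim is false: $P_{\le N}g$ is a convolution, and there is no pointwise monotonicity of $|P_{\le N}g(x)-P_{\le N}g(y)|$ in $N$. The paper avoids this by using Fatou for the lower bound and a Jensen-type convolution inequality for the upper bound, which is the correct substitute. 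That said, the error is gratuitous in your framework: every step in your derivation of the identity (change of variables, Plancherel at fixed $z$, Tonelli for the nonnegative integrand $|1-e^{-i\xi\cdot z}|^2|\hat g(\xi)|^2/|z|^{n+2s}$) is valid for \emph{all} $g\in L^2$ with both sides allowed to equal $+\infty$. So the compact Fourier support hypothesis and the subsequent truncation argument are unnecessary — the identity, and hence both comparability statements, follow directly for every $g\in L^2$. Replace the last paragraph with this observation and your proof is complete.
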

\begin{proof}
Note that
\begin{align*}
\| |\nabla|^s g \|_2^2  & = \int_{\mathbb R^n }  (|\nabla|^{2s} g)(x) g(x) dx 
= \int_{\mathbb R^n} \Bigl( \lim_{\epsilon \to 0} 
 C_{2s,n} \int_{|y-x|>\epsilon} \frac {g(x)-g(y)} {|x-y|^{n+2s} } dy \Bigr) g(x) dx, \notag 
\end{align*}
where $C_{2s,n}\sim_n s (1-s)$. Now for each $0<\epsilon<1$, it is easy to check that 
(for the case $\frac 12\le s<1$ one needs to make use  of the regularised quantity $g(x)-g(y)  + \nabla g(x) \cdot (y-x)$)
\begin{align*}
| \int_{|y-x|>\epsilon}
\frac {g(x)-g(y)} { |x-y|^{n+2s} } dy | \lesssim\; |g(x)| + |\mathcal Mg(x)| + 
|\mathcal M ( \partial g )(x)| + |\mathcal M ( \partial^2 g)(x)|,
\end{align*}
where $\mathcal Mg$ is the usual maximal function. By Lebesgue Dominated Convergence,
we then obtain
\begin{align*}
\| |\nabla|^s g \|_2^2 
& = C_{2s,n} \lim_{\epsilon \to 0}
\int_{\mathbb R^n} \int_{|y-x|>\epsilon}
\frac{ g(x)-g(y)} { |x-y|^{n+2s} } dy g(x) dx.
\end{align*}
Now note that for each $\epsilon>0$, we have
\begin{align*}
\int_{\mathbb R^n} \int_{|y-x|>\epsilon} \frac {|g(x)-g(y)|} {|x-y|^{n+2s}} dy |g(x) |dx 
\lesssim_{\epsilon,n} \int_{\mathbb R^n} (|g(x) |+ |\mathcal M g(x) |) |g(x)| dx <\infty.
\end{align*}
Therefore by using Fubini, symmetrising in $x$ and $y$ and Lebesgue Monotone
Convergence, we obtain
\begin{align*}
\| |\nabla|^s g\|_2^2 = \frac 12 C_{2s,n}
\lim_{\epsilon \to 0} \int_{ |x-y|>\epsilon} \frac {|g(x)-g(y)|^2} {|x-y|^{n+2s} } dx dy
=\frac 12 C_{2s,n} \int \frac{|g(x)-g(y)|^2} {|x-y|^{n+2s}} dx dy.
\end{align*}
Now if $g \in L^2(\mathbb R^n)$ with $\| \, |\nabla|^s g \|_2<\infty$, then by Fatou's Lemma,
we get
\begin{align*}
s(1-s) \int \frac{|g(x)-g(y)|^2} {|x-y|^{n+2s} } dx dy
\le s(1-s) \cdot \liminf_{J\to \infty} \int \frac {|P_{\le J} g (x) - P_{\le J} g (y) |^2}
{|x-y|^{n+2s} } dx dy 
\lesssim_n \, s(1-s) \| \, |\nabla|^s g \|_2^2.
\end{align*}
On the other hand, note that
\begin{align*}
| P_{\le J} g(x) - P_{\le J} g(y)|
& \le \int |g(x-z)-g(y-z)| 2^{nJ} |\phi(2^J z) |d z 
\lesssim_n \left( \int |g(x-z)-g(y-z)|^2 2^{nJ} |\phi(2^J z) | dz \right)^{\frac 12},
\end{align*}
where $\phi \in L^1$ is a smooth function used in the kernel $P_{\le J}$.  The desired
equivalence then easily follows.
\end{proof}
\begin{lem} \label{lem_ab_1}
Let $1<q<\infty$. Then for any $a$, $b\in \mathbb R$, we have
\begin{align*}
& (a-b)(|a|^{q-2} a - |b|^{q-2} b)  \sim_{q} (|a|^{\frac q2-1} a - |b|^{\frac q2-1} b)^2,\\
& (a-b) (|a|^{q-2}a - |b|^{q-2}b)
\ge \frac {4(q-1)} {q^2} ( |a|^{\frac q2-1} a
- |b|^{\frac q2-1} b)^2.
\end{align*}
\end{lem}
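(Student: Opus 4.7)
The plan is to rewrite both quantities as integrals of derivatives and then invoke Cauchy--Schwarz. Set $\phi(t)=|t|^{q-2}t$ and $\psi(t)=|t|^{q/2-1}t$, so $\phi'(t)=(q-1)|t|^{q-2}$ and $\psi'(t)=\frac{q}{2}|t|^{q/2-1}$. Since $q>1$, both of these are locally integrable, and the fundamental theorem of calculus gives (assume WLOG $a>b$)
\begin{align*}
\phi(a)-\phi(b)=(q-1)\int_b^a|t|^{q-2}\,dt,\qquad \psi(a)-\psi(b)=\frac{q}{2}\int_b^a|t|^{q/2-1}\,dt.
\end{align*}
The sharp lower bound then reduces to a single application of Cauchy--Schwarz with the functions $1$ and $|t|^{q/2-1}$ on $[b,a]$:
\begin{align*}
\Bigl(\int_b^a |t|^{q/2-1}\,dt\Bigr)^2 \le (a-b)\int_b^a|t|^{q-2}\,dt.
\end{align*}
Multiplying by $\frac{q^2}{4}$ on the left and $(q-1)(a-b)$ on the right and rearranging yields exactly $(a-b)(\phi(a)-\phi(b))\ge\frac{4(q-1)}{q^2}(\psi(a)-\psi(b))^2$.

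For the matching upper bound $\lesssim_q$ in the $\sim_q$ relation, I would use a scaling/compactness argument. Both $F(a,b):=(a-b)(\phi(a)-\phi(b))$ and $G(a,b):=(\psi(a)-\psi(b))^2$ are continuous, symmetric, nonnegative, and homogeneous of degree $q$ in $(a,b)$, and both vanish precisely on the diagonal $a=b$. By homogeneity it suffices to estimate $F/G$ on the compact set $\{(a,b):|a|^q+|b|^q=1\}$. The only points where both $F$ and $G$ vanish are $a=b=\pm 2^{-1/q}$, and near any such diagonal point $a=b=c\ne 0$ one has $\phi(a)-\phi(b)=(q-1)|c|^{q-2}(a-b)+O((a-b)^2)$ and $\psi(a)-\psi(b)=\frac{q}{2}|c|^{q/2-1}(a-b)+O((a-b)^2)$, so $F/G$ has the finite positive limit $\frac{4(q-1)}{q^2}$. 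Hence $F/G$ extends continuously to a strictly positive function on the whole compact unit $|\cdot|_q$-sphere, and is therefore bounded above and below by constants depending only on $q$.

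The main subtlety to check is that Cauchy--Schwarz applies without issue when $0\in[b,a]$: for $1<q<2$ both integrands $|t|^{q-2}$ and $|t|^{q/2-1}$ have an integrable singularity at the origin (their exponents exceed $-1$ precisely because $q>1$), so both integrals are finite and the inequality goes through unchanged. The case $a<b$ is handled by swapping $a,b$ on both sides, and $a=b$ is trivial. No further harmonic analysis input is needed; the whole lemma is a one-dimensional calculus statement.
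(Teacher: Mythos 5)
Your proof is correct, and it takes a genuinely different and arguably slicker route than the paper's. The paper normalizes to $a=1$, $b=x\in(0,1)$, reduces the sharp lower bound to the explicit inequality $\frac{1+x^q-x-x^{q-1}}{(1-x^{q/2})^2}\ge\frac{4(q-1)}{q^2}$, substitutes $t=x^{q/2}$, and establishes it by a concavity argument on $f(\eta)=1-2\eta-\frac{t^\eta-t^{1-\eta}}{1-t}$ with $\eta=\min\{\tfrac1q,1-\tfrac1q\}$ (the opposite-sign case is left implicit, and the $\sim_q$ direction is dismissed as ``easy to check''). Your argument instead writes $\phi(a)-\phi(b)=(q-1)\int_b^a|t|^{q-2}\,dt$ and $\psi(a)-\psi(b)=\tfrac{q}{2}\int_b^a|t|^{q/2-1}\,dt$ and obtains $(a-b)\int_b^a|t|^{q-2}\,dt\ge\bigl(\int_b^a|t|^{q/2-1}\,dt\bigr)^2$ in one Cauchy--Schwarz step, which after multiplying by $(q-1)$ gives exactly the sharp constant $\frac{4(q-1)}{q^2}$ --- uniformly in the sign configuration of $a,b$ and with the integrability of the singularity at $0$ for $1<q<2$ noted correctly. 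Your compactness/homogeneity argument for the $\sim_q$ upper bound (ratio extends continuously to the $\ell^q$-sphere with limit $\frac{4(q-1)}{q^2}$ at the two diagonal points, hence is bounded) is also sound and actually supplies the detail the paper omits. The trade-off: the paper's proof is elementary and self-contained but involves a slightly fiddly reduction; yours isolates the sharp constant as a direct consequence of Cauchy--Schwarz, which makes the origin of $\frac{4(q-1)}{q^2}=\frac{q-1}{(q/2)^2}$ transparent and avoids the case split on the signs of $a$ and $b$.
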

\begin{proof}
The first inequality is easy to check. To prove the second inequality, it suffices to show
for any $0<x<1$, 
\begin{align*}
\frac {1+x^q-x-x^{q-1} } {(1-x^{\frac q2} )^2} \ge \frac {4(q-1)}{q^2} = \frac {q^2-(q-2)^2}
{q^2}.
\end{align*}
Set $t= x^{\frac q2} \in (0,1)$. The inequality is obvious for $q=2$. If $2< q<\infty$, then we need
to show
\begin{align*}
\frac{ t^{\frac 1q} - t^{1-\frac 1q} } {1-t} < \frac {q-2} q.
\end{align*}
If $1<q<2$, then we need 
\begin{align*}
\frac{t^{1-\frac 1q} -t^{\frac 1q} }{1-t} < \frac {2-q} q.
\end{align*}
 Set $\eta= \min\{ \frac 1q, 1-\frac 1q\} \in (0,\frac 12)$. It then suffices for us to show the inequality
\begin{align*}
f(\eta)= 1-2\eta - \frac {t^{\eta} - t^{1-\eta} } {1-t} \ge 0.
\end{align*}
Note that $f(0)=f(1/2)=0$ and $f^{\prime\prime}(\eta)= -(t^{\eta}-t^{1-\eta}) (\log t)^2/(1-t) <0$.
Thus the desired inequality follows.
\end{proof}

\begin{prop} \label{prop2.5}
Let $1<q<\infty$ and $0<\gamma \le 2$. Then for any $f \in L^q(\mathbb R^n)$ and
any $j\in \mathbb Z$, we have 
\begin{align}
&\int_{\mathbb R^n} (|\nabla|^{\gamma} P_j f ) |  P_j f |^{q-2} P_j f dx 
\sim_{q} \Bigl\|  |\nabla|^{\frac {\gamma} 2} ( |P_j f |^{\frac q2-1} P_j f ) \Bigr\|_2^2, \notag\\
&\int_{\mathbb R^n} (|\nabla|^{\gamma} P_j f ) |  P_j f |^{q-2} P_j f dx 
\ge \frac{4(q-1)} {q^2} \Bigl\|  |\nabla|^{\frac {\gamma} 2} ( |P_j f |^{\frac q2-1} P_j f ) \Bigr\|_2^2 \ge \frac{4(q-1)}{q^2}
\Bigl\|  |\nabla|^{\frac {\gamma} 2} ( |P_j f |^{\frac q2} ) \Bigr\|_2^2.
  \label{prop2.4_e1}
\end{align}
Consequently if $\| P_j f \|_q =1$, then for any $0<s\le 1$,
\begin{align} \label{prop2.4_e2}
\| \, |\nabla|^s ( |P_j f |^{\frac q2-1} P_j f) \|_2 \sim_{q,n} 2^{js}.
\end{align}
Also for any $0<s\le 1$,
\begin{align} \label{prop2.4_e3}
\| \, |\nabla|^s ( |P_j f |^{\frac q2} ) \|_2 \sim_{q,n} 2^{js}.
\end{align}
\end{prop}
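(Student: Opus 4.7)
The plan is to reduce the weighted fractional integral to the double-integral form of Lemma \ref{lem2.2_equiv} and then apply the pointwise inequality of Lemma \ref{lem_ab_1}. Write $g=P_j f$; by the Fourier cutoff $g$ is bounded and smooth, and $|g|^{q/2-1}g\in L^2$ because $\||g|^{q/2-1}g\|_2^2=\|g\|_q^q<\infty$, which will let me apply Lemma \ref{lem2.2_equiv} to it.

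For $0<\gamma<2$ I would first establish the bilinear kernel identity
\begin{align*}
\int_{\mathbb R^n}(|\nabla|^{\gamma}g)|g|^{q-2}g\,dx
=\tfrac12 C_{\gamma,n}\int_{\mathbb R^n\times \mathbb R^n}
\frac{(g(x)-g(y))\bigl(|g(x)|^{q-2}g(x)-|g(y)|^{q-2}g(y)\bigr)}{|x-y|^{n+\gamma}}\,dx\,dy,
\end{align*}
by reproducing the derivation in Lemma \ref{lem2.2_equiv}: express $|\nabla|^{\gamma}g$ through its principal-value kernel, apply Fubini after symmetrising in $x$ and $y$, and dominate the conditionally convergent inner integrals (for $\gamma\ge 1$) using the regularised integrand $g(x)-g(y)+\nabla g(x)\cdot(y-x)$ together with the maximal-function bound used in Lemma \ref{lem2.2_equiv}. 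Applying Lemma \ref{lem_ab_1} pointwise with $a=g(x)$, $b=g(y)$ then yields the two-sided $\sim_q$ equivalence as well as the sharp constant $\frac{4(q-1)}{q^2}$; invoking Lemma \ref{lem2.2_equiv} once more with $s=\gamma/2\in(0,1)$ on $|g|^{q/2-1}g$ identifies the resulting double integral with $\||\nabla|^{\gamma/2}(|g|^{q/2-1}g)\|_2^2$, which proves the first two inequalities of \eqref{prop2.4_e1}. The last inequality in \eqref{prop2.4_e1} follows by inserting the elementary pointwise bound
\begin{align*}
\big||a|^{q/2-1}a-|b|^{q/2-1}b\big|\;\ge\;\big||a|^{q/2}-|b|^{q/2}\big|
\end{align*}
(equality when $ab\ge 0$, strict otherwise) into the $\dot H^{\gamma/2}$-kernel form of Lemma \ref{lem2.2_equiv}.

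The endpoint $\gamma=2$ I would treat separately by integration by parts: $\int(-\Delta g)|g|^{q-2}g\,dx=(q-1)\int |g|^{q-2}|\nabla g|^2\,dx$, and the identity $|\nabla(|g|^{q/2})|^2=(q/2)^2|g|^{q-2}|\nabla g|^2$ identifies this with $\frac{4(q-1)}{q^2}\||\nabla|(|g|^{q/2})\|_2^2$. The analogous relation for $|g|^{q/2-1}g$ is obtained by approximating $|g|$ by $\sqrt{\eta^2+g^2}$ and sending $\eta\downarrow 0$; the zero set of $g$ contributes nothing in the limit because $g$ is smooth.

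Finally, the equivalences \eqref{prop2.4_e2} and \eqref{prop2.4_e3} follow from \eqref{prop2.4_e1} by sandwiching: Lemma \ref{lem_heat} applied with $\gamma=2s$ supplies the lower bound $\int(|\nabla|^{2s}P_j f)|P_j f|^{q-2}P_j f\,dx\ge c_2 2^{2js}$, while the matching upper bound comes from H\"older plus Bernstein,
\begin{align*}
\int(|\nabla|^{2s}P_j f)|P_j f|^{q-2}P_j f\,dx\;\le\;\||\nabla|^{2s}P_j f\|_q\,\|P_j f\|_q^{q-1}\;\lesssim\; 2^{2js}.
\end{align*}
The main obstacle I anticipate is the rigorous justification of the bilinear kernel identity in the regime $1\le\gamma<2$, where the inner integrals are only conditionally convergent; the careful regularisation via $\nabla g(x)\cdot(y-x)$ followed by dominated convergence (as in Lemma \ref{lem2.2_equiv}) will be the delicate step, while everything downstream is a clean application of the algebraic inequalities in Lemma \ref{lem_ab_1} together with standard Bernstein bounds.
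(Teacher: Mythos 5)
Your treatment of \eqref{prop2.4_e1} is essentially the paper's: same bilinear kernel identity via symmetrisation and Fubini, same invocation of Lemma \ref{lem_ab_1} and Lemma \ref{lem2.2_equiv}, same pointwise inequality $\bigl||a|^{q/2-1}a-|b|^{q/2-1}b\bigr|\ge \bigl||a|^{q/2}-|b|^{q/2}\bigr|$ for the final step. Your $\gamma=2$ endpoint via direct integration by parts is a small, clean variant of the paper's limiting argument and is fine.

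The genuine gap is in \eqref{prop2.4_e3}. Your sandwiching gives
$\int(|\nabla|^{2s}P_jf)|P_jf|^{q-2}P_jf\,dx\sim 2^{2js}$,
and combined with the two-sided equivalence in \eqref{prop2.4_e1} this does yield \eqref{prop2.4_e2}. But \eqref{prop2.4_e1} provides only a one-sided inequality connecting the middle quantity to $\||\nabla|^{\gamma/2}(|P_jf|^{q/2})\|_2^2$: there is no reverse bound
$\int(|\nabla|^{\gamma}P_jf)|P_jf|^{q-2}P_jf\,dx\lesssim_q \||\nabla|^{\gamma/2}(|P_jf|^{q/2})\|_2^2$
available, and the pointwise inequality you cite goes the wrong way for that purpose (it bounds the $|g|^{q/2}$ difference from above by the $|g|^{q/2-1}g$ difference). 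So your sandwich produces only the upper bound $\||\nabla|^s(|P_jf|^{q/2})\|_2\lesssim 2^{js}$; the lower bound in \eqref{prop2.4_e3} is not obtained. This is precisely the nontrivial part — the remark before the proof explicitly notes that ``in terms of lower bound the inequality \eqref{prop2.4_e3} is stronger than \eqref{prop2.4_e2}.'' The paper closes this gap by a separate argument: it studies $I(s)=\int|\nabla|^s(|f|)\,|f|^{q-1}\,dx$, shows $I(s)\sim 1$ for small $s$ via a H\"ormander-multiplier bound on the kernel $\tilde s|\xi|^{\tilde s}\log|\xi|$ (split at $|\xi|=1/10$), and then shows $I(s)\sim_{q,n}\||\nabla|^{s/2}(|f|^{q/2})\|_2^2$ by re-symmetrising through a mollifier $Q_{\le N}$ with nonnegative kernel $q=\phi^2$. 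You will need some version of this, or an alternative route to the lower bound, to complete the proof of \eqref{prop2.4_e3}.

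Two secondary remarks. First, your sandwich leans on Lemma \ref{lem_heat}, whose inequality \eqref{qnot1} is cited from \cite{L13}; the paper's stated purpose for Proposition \ref{prop2.5} is to give an independent proof of that inequality, and it accordingly uses the self-contained Remark \ref{rem2.2_1} plus interpolation rather than re-importing \eqref{qnot1}. Your route is not logically circular, but it does not serve the paper's aim. Second, Lemma \ref{lem_heat} is stated only for $0<\gamma<2$, so your sandwich does not directly cover $s=1$; the paper handles $s=1$ via Bernstein $\|\nabla g\|_2\lesssim_{q,n}1$ and interpolation from small $s$, and you would need a comparable patch.
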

\begin{rem*}
In \cite{Ju_CMP05}, by using a strong nonlocal pointwise inequality (see also C\'ordoba-C\'ordoba \cite{CC_CMP04}),
 Ju Proved an inequality of the form: if $0\le \gamma \le 2$, $2\le q<\infty$,
$\theta$, $|\nabla|^{\gamma} \theta \in L^q$, then
\begin{align*}
\int (|\nabla|^{\gamma} \theta) |\theta|^{q-2} \theta dx \ge \frac 2 q \|  |\nabla|^{\frac{\gamma}2} ( |\theta|^{\frac q2} ) \|_2^2.
\end{align*}
A close inspection of our proof below shows that the inequality \eqref{prop2.4_e1}  also works with $P_j f$ replaced
by $\theta$.  Note that the present form works for any $1<q<\infty$. Furthermore
in the regime $q>2$, we have $\frac {4(q-1)} {q^2} > \frac 2q$ and hence the constant here
is slightly sharper.
\end{rem*}
\begin{rem*}
The inequality \eqref{prop2.4_e2} was already obtained by Chamorro and P. Lemari\'e-Rieusset in \cite{CL12}.  Remarkably modulo a $q$-dependent constant it is equivalent to the corresponding
inequality for the more localized quantity $\int ( |\nabla|^{\gamma} P_j f ) |P_j f|^{q-2} P_j f dx$.
The inequality \eqref{prop2.4_e3} for $q>2$ was obtained
by Chen, Miao and Zhang \cite{CMZ07} by using Danchin's inequality 
$\|  \nabla ( |P_1 f|^{q/2} ) \|_2^2 \sim_{q,n} \| P_1 f \|_q^q$ together with a fractional Chain
rule in Besov spaces. The key idea in \cite{CMZ07} is to show $\|\nabla P_{[N_0, N_1]}
( |P_1 f |^{q/2} ) \|_2 \gtrsim 1$   and in order to control the high frequency piece one needs
the assumption $q>2$ (so as to use $|\nabla|^{1+\epsilon_0}$-derivative for $\epsilon_0>0$
sufficiently small).  Our approach here is different: namely we will not use Danchin's inequality
and prove directly $\|\, |\nabla|^{s_0} ( |P_1 f|^{q/2} )\|_2 \gtrsim 1$ for some $s_0$ sufficiently
small (depending on $(q,n)$).  Together with some further interpolation argument we
 are able to settle the full range $1<q<\infty$. One should note that in terms of lower bound
 the inequality \eqref{prop2.4_e3} is stronger than \eqref{prop2.4_e2}.
\end{rem*}
\begin{proof}
With no loss we can assume $j=1$ and for simplicity write $P_1 f$ as $f$. 
Assume first $0<\gamma<2$. Then for some constant $C_{\gamma,n} \sim_n \gamma(2-\gamma)$,
we have (the rigorous justification of the computation below follows a smilar argument 
as in the proof of Lemma \ref{lem2.2_equiv})
\begin{align*}
\int (|\nabla|^{\gamma} f ) |f|^{q-2} f dx
& = C_{\gamma,n} \int (\lim_{\epsilon \to 0}
\int_{|y-x|>\epsilon} \frac {f(x) -f(y)} { |x-y|^{n+\gamma} } dy ) 
|f|^{q-2} f (x) dx \notag \\
& = \frac 12 C_{\gamma,n} \int
\frac { (f(x)-f(y)) ( |f|^{q-2} f (x) - |f|^{q-2} f(y) )} {|x-y|^{n+\gamma} } dx dy 
\notag \\
& \ge \frac{4(q-1)}{q^2} \cdot \frac 12 C_{\gamma,n} \int \frac { (|f|^{\frac q2-1} f(x) -|f|^{\frac q2-1} f(y) )^2}
{ |x-y|^{n+\gamma} } dx dy  = \, \frac{4(q-1)} {q^2}  \| \,|\nabla|^{\frac {\gamma}2}
( |f|^{\frac q2-1} f ) \|_2^2,
\end{align*}
where in the last two steps we have used Lemma  \ref{lem_ab_1} and Lemma
\ref{lem2.2_equiv} respectively.
One may then carefully take the limit $\gamma \to 2$ to get the result for $\gamma=2$ (when estimating $\| |\nabla|^{\frac {\gamma} 2} ( |f|^{\frac q2-1} f)\|_2$, one needs to 
split into $|\xi| \le 1$ and $|\xi|>1$, and use Lebesgue Dominated Convergence and Lebesgue
Monotone Convergence respectively). 
By the simple inequality $|\,  |f|^{\frac q2-1} f(x) - |f|^{\frac q2-1} f(y)|
\ge |\, |f|^{\frac q2}(x)- |f|^{\frac q2}(y) |$,  we also obtain
$\| |\nabla|^{\frac {\gamma}2 } ( |f|^{\frac q2-1} f ) \|_2 
\ge \| |\nabla|^{\frac {\gamma}2 } (|f|^{\frac q2} ) \|_2$. 

Next to show \eqref{prop2.4_e2}, we can use Remark \ref{rem2.2_1} to 
obtain $\| |\nabla|^{s} g \|_2 \sim_{q, n} 1$ for any $0<s\le s_0(n)$ and $g = 
|f|^{\frac q2-1} f$. Since $\|g\|_2=1$ and $\| \nabla g \|_2 \lesssim_{q,n}1 $,
a simple interpolation argument then yields $\| |\nabla|^s g \|_2 \sim_{q,n} 1$ uniformly
for $0<s\le 1$.

Finally to show \eqref{prop2.4_e3}, we first use the simple fact that $\| \nabla (|g|) \|_2
\le \| \nabla g \|_2$ to get
\begin{align*}
\| |\nabla| ( |f|^{\frac q2} ) \|_2 \lesssim_{q} 1.
\end{align*}
It then suffices for us to show $\| |\nabla|^s ( |f|^{\frac q2} ) \|_2 \gtrsim_{q,n} 1$
for $0<s\le s_0(q,n)$ sufficiently small.  To this end we consider the quantity 
\begin{align*}
I(s) = \int_{\mathbb R^n}  |\nabla|^s (|f|)  | f|^{q-1} dx.
\end{align*}
For $0<s<1$ this is certainly well defined since $\| |\nabla|^s (|f|) \|_q \lesssim
\| f\|_q + \| \nabla ( |f| )\|_q \lesssim 1$.  To circumvent the problem of differentiating under
the integral, one can further consider the regularized expression (later $N \to \infty$)
\begin{align*}
I_N(s) = \int_{\mathbb R^n} |\nabla|^s P_{\le N} ( |f| ) | f|^{q-1} dx.
\end{align*}
Then
\begin{align*}
I_N(s)- I_N(0)= \int_{\mathbb R^n} \int_0^s (
 T_{\tilde s} P_{\le N} (|f|) ) d\tilde s  |f|^{q-1} dx, \quad \widehat{T_{\tilde s}}(\xi) =\tilde s |\xi|^{\tilde s} \log |\xi|.
 \end{align*}
Define $\widehat{T_{\tilde s}^{(1)} }(\xi) = \tilde s |\xi|^{\tilde s} (\log |\xi|)\cdot \chi_{|\xi|<1/10}$ and $T_{\tilde s}^{(2)}
=T_{\tilde s}- T_{\tilde s}^{(1)}$.  It is not difficult to check that uniformly in $0<\tilde s\le \frac 12$, 
\begin{align*}
\sup_{\xi \ne 0}  \max_{|\alpha|\le [n/2]} ( |\xi|^{|\alpha|} |\partial_{\xi}^{\alpha}
( \widehat{ T_{\tilde s}^{(1)}} (\xi) ) |) \lesssim_{n} 1.
\end{align*}
Thus by H\"{o}rmander we get $\| T_{\tilde s}^{(1)} P_{\le N} ( |f|) \|_q \lesssim_{n,q} \| f\|_q =1$. For
$T_{\tilde s}^{(2)}$ one can use $\| \nabla f \|_q \lesssim 1$ to get an upper bound which is uniform
in $0<\tilde s\le \frac 12$. Therefore $\| T_{\tilde s} P_{\le N} (|f|) \|_q \lesssim_{q,n} 1$
for $0<\tilde s\le \frac 12$. One can then obtain for $0<s\le s_0(q,n)$ sufficiently small that
$\frac 12 \le I(s) \le \frac 32$. Finally view $I(s)$ as
\begin{align*}
I(s) = \lim_{N\to \infty} \int_{\mathbb R^n}
|\nabla|^s ( Q_{\le N} (|f|) )  ( Q_{\le N} (|f|) )^{q-1} dx, 
\end{align*}
where $\widehat{ Q_{\le N} } (\xi) = \hat q(2^{-N} \xi)$, and $\hat q \in C_c^{\infty}$ satisfies
$q(x)\ge 0$ for any $x \in \mathbb R^n$ (such $q$ can be easily constructed by
taking $q(x) = \phi(x)^2$ which corresponds to $\hat q = \hat \phi * \hat \phi$).  By using the integral representation of the operator
$|\nabla|^s$ and a symmetrization argument (similar to what was done before), we can obtain
\begin{align*}
I(s) \sim_{q,n} \| \, |\nabla|^{\frac s2} ( |f|^{\frac q2} ) \|_2^2
\end{align*}
and the desired result follows.

\end{proof}

\begin{lem} \label{lem2.6}
Let the dimension $n\ge 1$ and $0<\gamma\le 2$. 
Suppose $g \in L^2(\mathbb R^n)$ and for some $N_0>0$, $\epsilon_0>0$
\begin{align*}
\|  \hat g \|_{L^2(|\xi| \ge N_0) }^2 \ge \epsilon_0 \| \hat g \|_{L^2_{\xi}(\mathbb R^n)}^2.
\end{align*}
Then  there exists $t_0=t_0(\epsilon_0, N_0,\gamma)>0$ such that for
all $0\le t \le t_0$, we have 
\begin{align*}
\| e^{ -t |\nabla|^{\gamma} } g \|_{2} \le e^{ - \frac 12 \epsilon_0 N_0^{\gamma} t } \| g \|_2.
\end{align*}
Consequently if $\tilde g \in L^2(\mathbb R^n)$ satisfies $\|\tilde g \|_2 =C_1>0$, $\| |\nabla|^s \tilde g \|_2=C_2>0$
for some $s>0$, then for any $0<\gamma\le 2$, there exists $t_0=t_0(C_1,C_2,\gamma, n, s)>0$, $c_0=
c_0(C_1,C_2, \gamma, n, s)>0$, such that 
\begin{align*}
\| e^{-t |\nabla|^{\gamma} } \tilde g \|_2 \le e^{-c_0 t} \| \tilde g \|_2.
\end{align*}
\end{lem}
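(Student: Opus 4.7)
The plan is to prove both claims by Fourier splitting, with the second reducing to the first.

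For the first assertion, I will use Plancherel in the paper's convention ($\|g\|_2^2 = (2\pi)^{-n}\|\hat g\|_2^2$) and split the frequency integral at the shell $|\xi|=N_0$. On $\{|\xi|<N_0\}$ I bound $e^{-2t|\xi|^\gamma}\le 1$, and on $\{|\xi|\ge N_0\}$ by the monotone majorant $e^{-2tN_0^\gamma}$. Combining this with the concentration hypothesis yields
\begin{align*}
\|e^{-t|\nabla|^\gamma}g\|_2^2 \;\le\; \bigl(1-\epsilon_0(1-e^{-2tN_0^\gamma})\bigr)\|g\|_2^2.
\end{align*}
Restricting to $t\le t_0:=(\log 2)/(2N_0^\gamma)$, the elementary bounds $1-e^{-x}\ge x/2$ on $[0,\log 2]$ and $1-y\le e^{-y}$ collapse the right-hand side to $e^{-\epsilon_0 tN_0^\gamma}\|g\|_2^2$; a square root then gives the first assertion.

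For the second assertion, the strategy is to feed the first with admissible $(N_0,\epsilon_0)$ manufactured from $(C_1,C_2,s,n)$. The natural choice is $N_0=(C_2^2/(2C_1^2))^{1/(2s)}$, since the trivial low-frequency estimate
\begin{align*}
\int_{|\xi|<N_0}|\xi|^{2s}|\hat{\tilde g}|^2\,d\xi \;\le\; N_0^{2s}(2\pi)^n C_1^2 \;=\; \tfrac{1}{2}(2\pi)^n C_2^2
\end{align*}
forces $\int_{|\xi|\ge N_0}|\xi|^{2s}|\hat{\tilde g}|^2\,d\xi \ge (2\pi)^n C_2^2/2$. The remaining step is to convert this weighted lower bound into the unweighted concentration $\|\hat{\tilde g}\|_{L^2(|\xi|\ge N_0)}^2\ge \epsilon_0 (2\pi)^n C_1^2$ with $\epsilon_0=\epsilon_0(C_1,C_2,s,n)>0$; the first assertion then supplies the advertised $(t_0,c_0)$.

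The main obstacle is exactly this weighted-to-unweighted conversion, since a priori the high-frequency $\dot H^s$ mass could be carried by a vanishingly small $L^2$ footprint at enormous $|\xi|$. My plan is to combine the Chebyshev tail estimate $\|\hat{\tilde g}\|_{L^2(|\xi|>M)}^2\le M^{-2s}(2\pi)^n C_2^2$ (which the $\dot H^s$ norm controls) with a dyadic decomposition of the annulus $\{N_0\le|\xi|\le M\}$: for a suitable secondary cutoff $M$ chosen in terms of $(C_1,C_2,s)$, a definite fraction of the $\dot H^s$ mass is forced into this moderate annulus, on which $|\xi|^{2s}\le M^{2s}$ converts the $\dot H^s$ lower bound into an $L^2$ lower bound of order $C_2^2/M^{2s}$. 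This yields the uniform $\epsilon_0>0$ and completes the reduction.
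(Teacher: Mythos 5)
Your treatment of the first assertion is correct and matches the paper's Fourier-splitting argument. For the second, you rightly pinpoint the weighted-to-unweighted conversion as the crux, and the paper itself glosses over it: the observation that $\||\xi|^s\hat{\tilde g}\|_{L^2(|\xi|\le N_0)}\ll 1$ for small $N_0$ produces a lower bound only on $\int_{|\xi|\ge N_0}|\xi|^{2s}|\hat{\tilde g}|^2\,d\xi$, not on the unweighted high-frequency $L^2$ mass that is needed to invoke the first assertion.

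Unfortunately your Chebyshev-plus-annulus plan cannot close this gap, because the failure mode you yourself describe is a genuine counterexample to the second assertion as stated. In $n=1$, set $\hat{\tilde g}_k = A_k\mathbf{1}_{\{|\xi|\le 1/k\}} + B_k\mathbf{1}_{\{k\le|\xi|\le 2k\}}$ and tune $A_k,B_k>0$ as $k\to\infty$ so that $\|\tilde g_k\|_2=C_1$ and $\||\nabla|^s\tilde g_k\|_2=C_2$ for every $k$. Then essentially all the $L^2$ mass sits at $|\xi|\le 1/k$ while essentially all the $\dot H^s$ mass sits at $|\xi|\sim k$, and for each fixed $t>0$
\[
\frac{\|e^{-t|\nabla|^\gamma}\tilde g_k\|_2^2}{\|\tilde g_k\|_2^2}\;\ge\;\frac{A_k^2}{\pi k\,C_1^2}\,e^{-2t\,k^{-\gamma}}\;\longrightarrow\;1\quad (k\to\infty),
\]
since $A_k^2/(\pi k)\to C_1^2$. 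Hence no $(t_0,c_0)$ depending only on $(C_1,C_2,\gamma,n,s)$ can exist. The step in your plan that breaks is the claim that some cutoff $M=M(C_1,C_2,s)$ traps a definite fraction of the $\dot H^s$ mass in $\{N_0\le|\xi|\le M\}$: Chebyshev controls only the $L^2$ tail above $M$, whereas in the example the $\dot H^s$ mass lies entirely above every fixed $M$. What rescues the lemma in its actual use (Proposition \ref{prop2.7}) is the additional uniform bound $\||\nabla|\tilde g\|_2\lesssim_{q,n}1$ supplied by Proposition \ref{prop2.5}; that bound does control the $\dot H^s$ tail via $\int_{|\xi|>M}|\xi|^{2s}|\hat{\tilde g}|^2\lesssim M^{2s-2}$ and then your annulus conversion closes. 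Either such an upper bound on a higher norm, or a restriction on the admissible $\tilde g$, needs to be added both to the statement and to your reduction.
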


\begin{proof}
With no loss we assume $\| \hat g \|_{L_{\xi}^2 } =1$. Then
\begin{align*}
\int_{\mathbb R^n} e^{ -2t |\xi|^{\gamma} } |\hat g (\xi) |^2 d\xi 
&\le 1- \| \hat g \|_{L^2 (|\xi| \le N_0)}^2 +  e^{-2 t N_0^{\gamma} }   \| \hat g \|_{L^2(|\xi|> N_0)}^2 \notag \\
&
\le  1-\epsilon_0 +e^{-2t N_0^{\gamma} } \epsilon_0
\le 1- \epsilon_0 + (1-\frac 32 tN_0^{\gamma} ) \epsilon_0 \le e^{- \epsilon_0 N_0^{\gamma} t},
\end{align*}
where in the last two steps we used the fact $e^{-x} \sim 1- x +\frac {x^2} 2$ for $x \to 0+$. 
The inequality for $\tilde g$ follows from the observation that 
$ \| |\xi|^s \hat{\tilde g}(\xi) \|_{L^2_{\xi}(|\xi| \le N_0)} \ll 1$ for $N_0$ sufficiently small. 
\end{proof}

\begin{prop} \label{prop2.7}
Let the dimension $n\ge 1$, $0<\gamma\le 2$ and $1<q<\infty$. Then for any
$f \in L^q(\mathbb R^n)$, any $j\in \mathbb Z$ and any $t>0$, we have
\begin{align*}
\| e^{ - t|\nabla|^{\gamma} } P_j f \|_q \le e^{- c 2^{j\gamma} t} \|  P_j f \|_q,
\end{align*}
where $c>0$ is a constant depending on $(\gamma,n,q)$.

\end{prop}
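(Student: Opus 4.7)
The plan is to derive an ordinary differential inequality for
$\phi(t) := \|u(t)\|_q^q$ where $u(t) := e^{-t|\nabla|^{\gamma}} P_j f$, and then integrate. The crucial structural observation is that $e^{-t|\nabla|^{\gamma}}$ is a Fourier multiplier, so $\widehat{u(t)}(\xi) = e^{-t|\xi|^\gamma}\widehat{P_j f}(\xi)$ is supported in the same dyadic annulus $\{|\xi| \sim 2^j\}$ as $\widehat{P_j f}$. In particular $P_j u(t) = u(t)$ for every $t \ge 0$, so Proposition \ref{prop2.5} applies with $P_j f$ replaced by $u(t)$. Formally differentiating in $t$,
$$
\phi'(t) = -q \int_{\mathbb R^n} \bigl(|\nabla|^{\gamma} u(t)\bigr)\,|u(t)|^{q-2} u(t)\,dx,
$$
and the second inequality of Proposition \ref{prop2.5} then yields the pointwise-in-$t$ bound
$$
\phi'(t) \le -\frac{4(q-1)}{q}\, \bigl\| |\nabla|^{\gamma/2}\bigl(|u(t)|^{q/2}\bigr) \bigr\|_2^2.
$$

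The next step is to convert the right-hand side back into a multiple of $\phi(t)$ with the correct prefactor $2^{j\gamma}$. Inequality \eqref{prop2.4_e3} is stated under the normalization $\|P_j f\|_q = 1$; a trivial scaling (replace $P_j f$ by $\lambda P_j f$) upgrades it to
$$
\bigl\| |\nabla|^{\gamma/2}\bigl(|g|^{q/2}\bigr) \bigr\|_2^2 \gtrsim_{q,n}\, 2^{j\gamma}\, \|g\|_q^q
$$
for any $g$ with Fourier support in $\{|\xi|\sim 2^j\}$. Applied at $g = u(t)$ this produces
$$
\phi'(t) \le -\tilde c(\gamma,n,q)\, 2^{j\gamma}\, \phi(t),
$$
and Gr\"onwall's inequality then gives $\phi(t) \le e^{-\tilde c\, 2^{j\gamma} t}\,\phi(0)$. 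Taking $q$-th roots finishes the proof with $c = \tilde c/q$.

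The main obstacle will be the rigorous justification of $\frac{d}{dt}\|u(t)\|_q^q$ when $1 < q < 2$, since $x \mapsto |x|^q$ is not $C^2$ at the origin. I would bypass this via a standard regularization, replacing $|u|^q$ by $(|u|^2 + \varepsilon)^{q/2}$, carrying out the computation for the smooth regularized functional (which is legal because $u(t)$ is $C^\infty$ jointly in $(t,x)$ by the smoothing effect of $e^{-t|\nabla|^\gamma}$ acting on frequency-localized data), and then passing $\varepsilon \to 0$ by dominated convergence. An equally good alternative is to first verify the estimate for Schwartz $f$, where all differentiations are legal, and then extend to general $f \in L^q$ by density and continuity of both sides in the $L^q$ topology.
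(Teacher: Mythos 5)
Your proof is correct, and it takes a genuinely different route from the paper's. You run an energy/Gr\"onwall argument: differentiate $\|u(t)\|_q^q$, invoke the lower bound from Proposition~\ref{prop2.5} and the (rescaled) estimate \eqref{prop2.4_e3} to obtain $\phi'(t)\le -\tilde c\, 2^{j\gamma}\phi(t)$, and integrate. The paper instead keeps the problem static: it writes $e^{-t|\nabla|^\gamma}f=K*f$ with $K\ge 0$, $\int K=1$, applies Jensen ($|K*f|^{q/2}\le K*|f|^{q/2}$) to reduce the $L^q$ estimate for $q\ge 2$ to an $L^2$ decay estimate for $|f|^{q/2}$, handles $1<q<2$ by a H\"older interpolation on the kernel, and then concludes by Lemma~\ref{lem2.6} together with Proposition~\ref{prop2.5}. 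Both approaches consume the same ingredient (the Bernstein-type lower bound encoded in Proposition~\ref{prop2.5}, which is what puts the factor $2^{j\gamma}$ into play), but your ODE route avoids the kernel-positivity and Jensen/H\"older step entirely, at the cost of having to justify differentiation of $t\mapsto\|u(t)\|_q^q$ — which you correctly flag as the technical point and which the paper's static argument sidesteps. One small correction: $P_j u(t)=u(t)$ is not literally true (the Littlewood--Paley operators here are not projections, $P_j^2\ne P_j$); what you want to say instead is that $u(t)=P_j\bigl(e^{-t|\nabla|^\gamma}f\bigr)$, hence $u(t)$ is of the exact form $P_j g$ with $g\in L^q$, so Proposition~\ref{prop2.5} applies verbatim. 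Finally, note that the combined content of Proposition~\ref{prop2.5} and the rescaled \eqref{prop2.4_e3} that you use is precisely \eqref{qnot1} of Lemma~\ref{lem_heat}, so your argument is really the classical ``Bernstein inequality $+$ Gr\"onwall'' proof; the paper deliberately gives a different, kernel-based derivation, perhaps to keep Proposition~\ref{prop2.7} independent of the semigroup-perturbation proof of \eqref{qnot1}.
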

\begin{proof}
With no loss we assume $j=1$ and write $P_1 f $ simply as $f$. In view of the semigroup property
of $e^{-t |\nabla|^{\gamma}}$ it suffices to prove the inequality for
$0<t \ll_{\gamma,q,n} 1$. Denote $e^{-t |\nabla|^{\gamma} } f = K*f$ and observe that
$K$ is a positive kernel with $\int K(z) dz =1$. Consider first $2\le q<\infty$. Clearly
\begin{align*}
 | \int K(x-y) f(y) dy|^{\frac q2} \le \int K(x-y) |f(y)|^{\frac q2} dy.
 \end{align*}
 By Lemma \ref{lem2.6} and Proposition \ref{prop2.5}, we then get
 \begin{align*}
 \| e^{-t |\nabla|^{\gamma} } f \|_q^q 
 \le  \| e^{-t |\nabla|^{\gamma} } ( |f|^{\frac q2} ) \|_2^2 
 \le e^{-ct} \| f \|_q^q.
 \end{align*}
For the case $1<q<2$, we observe
\begin{align*}
\|\int K(x-y) |f(y)| dy \|_{L_x^q} & \le
\| ( \int K(x-y) |f(y)|^{\frac q2} dy )^{\frac {2(q-1)} q} \cdot 
( \int K(x-y) |f(y)|^q dy)^{\frac {2-q} q} \|_{L_x^q}  \notag \\
&\le \| ( \int K(x-y) |f(y)|^{\frac q2} dy)^{\frac {2(q-1)} q} \|_{L_x^{\frac q {q-1} }}
\| (\int K(x-y) |f(y)|^q d y)^{\frac {2-q} q} \|_{L_x^{\frac q {2-q} }} \notag \\
& \le \| e^{-t |\nabla|^{\gamma} } ( |f|^{\frac q2} ) \|_2^2 \cdot \| f \|_q^{2-q}.
\end{align*}
Thus this case is also OK.
\end{proof}

For the next lemma we need to introduce some terminology. Consider a function $F:\, (0,\infty) \to \mathbb R$. 
We shall say $F$ is admissible if $F\in C^{\infty}$ and
\begin{align*}
| F^{(k)} (x) | \lesssim_{k, F} x^{-k}, \quad \forall\, k\ge 0, \; 0<x<\infty.
\end{align*}
It is easy to check that $ \tilde F(x) = x F^{\prime}(x)$ is admissible if $F$ is admissible.  A simple example of
admissible function is $F(x) = e^{-x}$ which will show up in the bilinear estimates later. 

\begin{lem} \label{lem_phase_1}
Let $0<\gamma<1$ and $\sigma(\xi,\eta)= |\xi|^{\gamma} +|\eta|^{\gamma} - |\xi+\eta|^{\gamma}$ for
$\xi$, $\eta\in \mathbb R^n$, $n\ge 1$. Then for $0<|\xi|\ll 1$, $|\eta| \sim 1$, the following hold:
\begin{enumerate}
\item $ |\partial_{\xi}^{\alpha} \partial_{\eta}^{\beta} \sigma(\xi, \eta) | \lesssim_{\alpha, \beta,\gamma,n} 
|\xi|^{\gamma-|\alpha|}$,  for any $\alpha$, $\beta$. 

\item $| \partial_{\xi}^{\alpha} ( \sigma^{-m} ) | \lesssim_{\alpha,\gamma, m,n}
|\xi|^{-m\gamma-|\alpha|}$ for any $m\ge 1$ and any $\alpha$. 

\item $ |\partial_{\xi}^{\alpha} ( F(t \sigma) ) | \lesssim_{\alpha,\gamma,n, F} |\xi|^{-|\alpha|}$ for any
$\alpha$, $t>0$, and any admissible $F$. 

\item $|\partial_{\xi}^{\alpha} \partial_{\eta}^{\beta} (F(t\sigma) ) | \lesssim_{\alpha,\gamma,\beta, n, F}
|\xi|^{-|\alpha|}$, for any $\alpha$, $\beta$, and any $t>0$, $F$ admissible.
\end{enumerate}
\end{lem}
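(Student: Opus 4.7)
The plan is to isolate the singular contribution from $|\xi|^{\gamma}$ and treat everything else as a smooth function in the region $0<|\xi|\ll 1$, $|\eta|\sim 1$, then feed the resulting bounds into Fa\`a di Bruno. I would begin by writing
\[
\sigma(\xi,\eta)=|\xi|^{\gamma}+G(\xi,\eta),\qquad G(\xi,\eta):=|\eta|^{\gamma}-|\xi+\eta|^{\gamma}.
\]
Since $|\xi+\eta|\sim 1$ throughout the region, $G$ is jointly smooth in $(\xi,\eta)$. Moreover $G(0,\eta)\equiv 0$, so Taylor expansion in $\xi$ gives $|\partial_\eta^{\beta}G(\xi,\eta)|\lesssim_{\beta,\gamma,n}|\xi|$ for every $\beta$, while for $\alpha\ne 0$ the identity $\partial_\xi^{\alpha}\partial_\eta^{\beta}G=-\partial_\xi^{\alpha}\partial_\eta^{\beta}|\xi+\eta|^{\gamma}$ yields a uniform $O(1)$ bound. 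Combined with $|\partial_\xi^{\alpha}(|\xi|^{\gamma})|\lesssim_{\alpha}|\xi|^{\gamma-|\alpha|}$ and the observation that $|\xi|^{\gamma-|\alpha|}\gtrsim 1$ for $\alpha\ne 0$ (since $|\alpha|\ge 1>\gamma$ and $|\xi|\ll 1$), this immediately yields~(1). Specializing to $\alpha=\beta=0$ gives $\sigma=|\xi|^{\gamma}+O(|\xi|)$, whence $\gamma<1$ furnishes the two-sided bound $\sigma(\xi,\eta)\sim|\xi|^{\gamma}$, the main workhorse for the rest.

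For (2), Fa\`a di Bruno applied to $\sigma^{-m}$ expresses $\partial_\xi^{\alpha}(\sigma^{-m})$ as a finite sum of terms of the shape $\sigma^{-m-k}\prod_{j=1}^{k}\partial_\xi^{\alpha_j}\sigma$ with $\alpha_1+\cdots+\alpha_k=\alpha$ and each $|\alpha_j|\ge 1$. By~(1), $\prod_{j}|\partial_\xi^{\alpha_j}\sigma|\lesssim|\xi|^{k\gamma-|\alpha|}$ and $\sigma^{-m-k}\lesssim|\xi|^{-(m+k)\gamma}$, whose product is $|\xi|^{-m\gamma-|\alpha|}$, as claimed.

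For (3) and (4), Fa\`a di Bruno applied to $F(t\sigma)$ writes $\partial_\xi^{\alpha}\partial_\eta^{\beta}F(t\sigma)$ as a finite sum of terms of the form
\[
F^{(k)}(t\sigma)\cdot\prod_{j=1}^{k}t\,\partial_\xi^{\alpha_j}\partial_\eta^{\beta_j}\sigma,
\]
with $\sum_j\alpha_j=\alpha$, $\sum_j\beta_j=\beta$, and $|\alpha_j|+|\beta_j|\ge 1$. Admissibility gives $|F^{(k)}(t\sigma)|\lesssim(t\sigma)^{-k}\lesssim t^{-k}|\xi|^{-k\gamma}$, and (1) gives $|t\,\partial_\xi^{\alpha_j}\partial_\eta^{\beta_j}\sigma|\lesssim t|\xi|^{\gamma-|\alpha_j|}$. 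The $t$-powers cancel exactly and the $|\xi|$-exponents combine to $|\xi|^{-|\alpha|}$, uniformly in $t>0$ and independently of $\beta$. The $k=0$ term is only nontrivial when $\alpha=\beta=0$, in which case $|F(t\sigma)|\lesssim 1$ by admissibility.

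The only point demanding genuine care is the $t$-bookkeeping for (3) and (4): admissibility of $F$ provides exactly $k$ powers of $(t\sigma)^{-1}$ to cancel the $k$ copies of $t\,\partial\sigma$ produced by the chain rule, which is precisely what makes the bounds uniform in $t$. Everything else reduces to the basic observation that $\eta$-derivatives cost nothing (they only hit the smooth piece $G$) whereas each $\xi$-derivative produces one power of $|\xi|^{-1}$.
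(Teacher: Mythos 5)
Your proof is correct and follows the same essential route as the paper: isolate the singular part $|\xi|^{\gamma}$, observe that the remainder $G=|\eta|^{\gamma}-|\xi+\eta|^{\gamma}$ is smooth and vanishes at $\xi=0$, derive the two-sided bound $\sigma\sim|\xi|^{\gamma}$ (where $\gamma<1$ is decisive), and then push derivative bounds through. The only stylistic difference is that you organize parts (2)--(4) via Fa\`a di Bruno with explicit $t$-power bookkeeping, whereas the paper runs an induction on $|\alpha|$ using the auxiliary admissible function $\tilde F(x)=xF'(x)$ together with an ``OK''-term notation; the two are equivalent, and your Fa\`a di Bruno version makes the cancellation of the $t$-powers particularly transparent. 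One small remark: you could note that the $|\partial_\eta^\beta G|\lesssim|\xi|$ bound in fact gives the slightly stronger $|\partial_\xi^\alpha\partial_\eta^\beta\sigma|\lesssim|\xi|^{1-|\alpha|}$ when $\beta\ne 0$ (as the paper does), but since $|\xi|^{1-|\alpha|}\le|\xi|^{\gamma-|\alpha|}$ in this regime, the stated estimate is all that is needed.
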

\begin{rem} \label{rem_lem_phase_1}
The condition $|\xi| \ll 1$, $|\eta|\sim 1$ can be replaced by
$0<|\xi| \lesssim 1$, $|\eta|\sim 1$, $|\xi+\eta| \sim 1$.
\end{rem}

\begin{rem*}
This lemma also highlights the importance of the assumption $0<\gamma<1$. For $0<\gamma<1$, note that the function
$g(x) =1 +x^{\gamma} - (1+x)^{\gamma} \sim  \min\{ x^{\gamma}, \, 1 \}$. By 
the triangle inequality, this implies $\sigma(\xi,\eta) \ge |\xi|^{\gamma} + |\eta|^{\gamma} - (|\xi|+|\eta|)^{\gamma}
\gtrsim  \min \{ |\xi|^{\gamma}, |\eta|^{\gamma} \}$ which does not vanish as long as $|\xi|>0$ and $|\eta|>0$. 
 However for $\gamma=1$, the
phase $\sigma(\xi,\eta) = |\xi|+ |\eta| - |\xi+\eta|$ no longer enjoys such a lower bound since $\sigma \equiv 0$ on
the one-dimensional cone $\xi = \lambda \eta$, $\lambda \ge 0$. 

\end{rem*}

\begin{proof}
With no loss we consider dimension $n=1$. The case $n>1$ is similar except some 
minor changes in numerology. 

 (1)  Note that for $|\xi|\ll 1$, $|\eta| \sim 1$, we have
\begin{align*}
\sigma(\xi, \eta) = |\xi|^{\gamma}  - \gamma \int_0^1 |\eta+ \theta \xi|^{\gamma-2} (\eta+\theta \xi) d\theta \cdot \xi.
\end{align*}
Observe that
\begin{align*}
\partial_{\eta}  \sigma(\xi, \eta) = (\operatorname{OK}) \cdot \xi,
\end{align*}
where we use the notation $\operatorname{OK}$ to denote any term which satisfy
\begin{align*}
| \partial_{\xi}^{\alpha} \partial_{\eta}^{\beta} ( \operatorname{OK} ) | \lesssim 1, \quad\forall\, \alpha,\beta. 
\end{align*}
This notation will be used throughout this proof. 
Thus for any $\beta\ge 1$, $\alpha\ge 0$, we have 
\begin{align*}
| \partial_{\xi}^{\alpha} \partial_{\eta}^{\beta} \sigma (\xi, \eta) | \lesssim |\xi|^{1-|\alpha|}.
\end{align*}
On the other hand, if $\beta=0$ and $\alpha\ge 1$, then clearly
\begin{align*}
| \partial_{\xi}^{\alpha} \sigma(\xi,\eta) | = |\partial_{\xi}^{\alpha} ( |\xi|^{\gamma} - |\xi+\eta|^{\gamma} )|
\lesssim |\xi|^{ \gamma-|\alpha|}.
\end{align*}

(2) Observe that for $|\xi| \ll 1$ and $|\eta| \sim 1$, we always have  $\sigma(\xi,\eta) \gtrsim |\xi|^{\gamma}$. One
can then induct on $\alpha$. 

(3) One can induct on $\alpha$. The statement clearly holds for $\alpha=0$. Assume the statement holds for $\alpha\le m$ and
any admissible $F$.
Then for $\alpha=m+1$, we have 
\begin{align*}
\partial_{\xi}^{m+1} ( F(t\sigma) ) = \partial_{\xi}^m (    \tilde F( t\sigma)  \cdot \sigma^{-1} \cdot \partial_{\xi} \sigma),
\end{align*}
where $\tilde F(x) = x F^{\prime}(x)$ is again admissible. The result then follows from
the inductive assumption, Leibniz and the estimates obtained
in (1) and (2).

(4) Observe that $\partial_{\eta}(\frac 1 {\sigma} ) =  - \sigma^{-2} \partial_{\eta} \sigma = \sigma^{-2} \xi \cdot
(\operatorname{OK})$, and in general for $\beta \ge 0$,
\begin{align*}
\partial_{\eta}^{\beta} ( \frac 1 {\sigma} ) = \sum_{ 0\le m\le \beta}  \sigma^{-m-1} \xi^m \cdot (\operatorname{OK}).
\end{align*}
Note that for $\beta \ge 1$ the summand corresponding to $m=0$ is actually absent (this is allowed
in our notation since we can take the term ($\operatorname{OK}$) to be zero).
Similarly one can check for any admissible $F$ and $t>0$,
\begin{align*}
\partial_{\eta}^{\beta} ( F(t\sigma) ) = \sum_{0\le m \le \beta} F_m(t\sigma) \cdot ( \frac {\xi} {\sigma} )^m \cdot 
(\operatorname{OK}),
\end{align*}
where $F_m$ are admissible functions.  This then reduce matters to the estimate in (3). The result is obvious. 
\end{proof}

\section{Nonlinear estimates: $H^{2-\gamma}$ case for $0<\gamma<1$}

\begin{lem} \label{lem1}
Set $A=D^{\gamma}$, $s=2-\gamma$ and recall
$R^{\perp}=(-D^{-1} \partial_2, D^{-1} \partial_1)$.  For any real-valued $f, \, g \in L^2(\mathbb R^2)$ with
$\hat f$ and $\hat g$ being compactly supported, it holds that
\begin{align} 
| \int_{\mathbb R^2} D^s (e^{-t A} R^{\perp} g \cdot \nabla e^{-tA} f) D^s e^{tA} f dx | \lesssim_{\gamma} 
\| g \|_{\dot H^s} \| f \|_{\dot H^{s+\frac{\gamma}2} }^2. \label{lem1_e1} 
\end{align}
If in addition $\operatorname{supp}(\hat g) \subset B(0,N_0)$ for some $N_0>0$, then
\begin{align} 
&| \int_{\mathbb R^2} D^s (e^{-t A}R^{\perp} g  \cdot \nabla e^{-tA} f) D^s e^{tA} f dx | \lesssim_{\gamma} 
N_0^{2s+2} \|f\|_2^2 \|g \|_2+ \notag \\
& \qquad \qquad
\|f \|_{\dot H^s} \cdot \| f \|_{\dot H^{s+\frac{\gamma}2}} \cdot \| g\|_2\cdot N_0^{2-\frac{\gamma}2}
+\| f \|_{\dot H^{s+\frac{\gamma}2}}^2 \cdot t N_0^2 \| g \|_2
, \label{lem1_e2}\\
%
&| \int_{\mathbb R^2} D^s (e^{-tA} R^{\perp} f \cdot \nabla e^{-tA} g) D^s e^{tA} f dx |
\lesssim_{\gamma} \| f \|_{\dot H^s}^2 \cdot N_0^2 \| g\|_2 + N_0^{2s+2} \|f \|_2^2 \|g\|_2,  \label{lem1_e3}\\
&| \int_{\mathbb R^2} D^s (e^{-tA} R^{\perp} g \cdot \nabla e^{-tA} g) D^s e^{tA} f dx |
\lesssim_{\gamma}  N_0^{2s+2} \|g \|_2^2 \|\hat f \|_{L^2(|\xi|\le 2N_0)}. \label{lem1_e4}
\end{align}
\end{lem}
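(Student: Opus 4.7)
The plan is to pass to Fourier variables and write each of the four integrals as a trilinear form. For \eqref{lem1_e1}, one obtains an expression of the type
\[
\int |\xi+\eta|^{2s}\, e^{-t\sigma(\xi,\eta)}\, \frac{\xi^\perp\cdot\eta}{|\xi|}\, \hat g(\xi)\, \hat f(\eta)\, \hat f(-\xi-\eta)\, d\xi\, d\eta,
\]
where $\sigma(\xi,\eta)=|\xi|^\gamma+|\eta|^\gamma-|\xi+\eta|^\gamma\ge 0$ by subadditivity, so that $e^{-t\sigma}\le 1$, and by Lemma~\ref{lem_phase_1} the $\xi$- and $\eta$-derivatives of $e^{-t\sigma}$ satisfy tame symbol-type estimates uniform in $t$. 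The divergence-free identity $\int u\cdot\nabla(D^s\varphi)D^s\varphi\,dx=0$ does not apply directly here because of the Gevrey asymmetry: the outer $f$ is lifted by $e^{tA}$ while the inner ones are damped by $e^{-tA}$. The discrepancy is quantified precisely by the bounded factor~$e^{-t\sigma}$.

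\textbf{Cancellation for \eqref{lem1_e1}.} I would symmetrize the two $\hat f$-slots under $\eta\leftrightarrow -\xi-\eta$, using $\xi^\perp\cdot(-\xi-\eta)=-\xi^\perp\cdot\eta$, to rewrite the symbol as
\[
\frac12\,\frac{\xi^\perp\cdot\eta}{|\xi|}\,\bigl[|\xi+\eta|^{2s}e^{-t\sigma(\xi,\eta)}-|\eta|^{2s}e^{-t\sigma(\xi,-\xi-\eta)}\bigr].
\]
The bracket vanishes when $\xi=0$ (both summands equal $|\eta|^{2s}$), so it decomposes as
\[
(|\xi+\eta|^{2s}-|\eta|^{2s})e^{-t\sigma(\xi,\eta)}+|\eta|^{2s}\bigl(e^{-t\sigma(\xi,\eta)}-e^{-t\sigma(\xi,-\xi-\eta)}\bigr).
\]
The first summand yields a gain of $|\xi|\,|\eta|^{2s-1}$ via the standard mean-value bound on $|x|^{2s}$; the second uses Lemma~\ref{lem_phase_1} to extract comparable smoothness of $e^{-t\sigma}$ in the $\eta$ variable. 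The extra factor of $|\xi|$ compensates the $|\xi|^{-1}$ loss in the prefactor $\xi^\perp\cdot\eta/|\xi|$. A Littlewood--Paley decomposition of $f$ and $g$ reduces matters to a sum over triples of shell interactions which, after allocating $|\xi|^s$ to $\hat g$ and $|\eta|^{s+\gamma/2},|\xi+\eta|^{s+\gamma/2}$ to the two $\hat f$-slots via Cauchy--Schwarz and a Schur-test argument across frequency shells, closes the estimate at $\|g\|_{\dot H^s}\|f\|_{\dot H^{s+\gamma/2}}^2$.

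\textbf{Frequency-restricted bounds for \eqref{lem1_e2}--\eqref{lem1_e4}.} When $\operatorname{supp}(\hat g)\subset B(0,N_0)$, Bernstein allows us to trade Sobolev norms of $g$ for $L^2$ norms with powers of $N_0$. I would split the $\eta$-integration into $|\eta|\lesssim N_0$ and $|\eta|\gg N_0$: on the low-frequency piece one applies Bernstein to the full extent, producing the term $N_0^{2s+2}\|f\|_2^2\|g\|_2$; on the high-frequency piece one has $|\xi+\eta|\sim|\eta|$ and can use Cauchy--Schwarz to retain one $\dot H^s$ or $\dot H^{s+\gamma/2}$ norm on $f$ while paying polynomial $N_0$ factors for the contribution of $g$. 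The differences between \eqref{lem1_e2}, \eqref{lem1_e3}, \eqref{lem1_e4} reflect the placement of $g$ in the trilinear form: in \eqref{lem1_e3} the gradient acts on $g$, so the $|\eta|$ in the symbol is capped by $N_0$ and absorbed into $N_0^2\|g\|_2$; in \eqref{lem1_e4}, $g$ occupies both nonlinear slots, forcing $|\xi+\eta|\le 2N_0$, which localizes $\hat f$ to this ball and leaves only $\|\hat f\|_{L^2(|\xi|\le 2N_0)}$ in the final bound.

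\textbf{Main obstacle.} The subtlety throughout is the survival of the commutator-type cancellation in the presence of the Gevrey lift. The standard SQG bound uses $\int u\cdot\nabla(D^sf)D^sf\,dx=0$; here the analogous integral is $\int u\cdot\nabla(D^s\varphi)D^s\psi\,dx$ with $\varphi=e^{-tA}f$, $\psi=e^{tA}f$, and the direct IBP cancellation fails. One must show that the residual factor $e^{-t\sigma}$ is sufficiently regular to preserve the first-order cancellation in~$\xi$ at the symbol level, with constants uniform in~$t$. Lemma~\ref{lem_phase_1} is precisely what makes this possible, and it is the pivotal tool in attaining the optimal Gevrey threshold~$\gamma$ rather than a strictly smaller exponent.
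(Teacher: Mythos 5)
Your proposal is correct and follows essentially the same route as the paper's proof: pass to the Fourier side, symmetrize the two $\hat f$-slots to expose a first-order cancellation in $g$'s frequency, split the resulting symbol into a power-difference part (mean value theorem) and a phase-difference part (handled by Lemma~\ref{lem_phase_1}), and close via Littlewood--Paley/Cauchy--Schwarz; the frequency-restricted bounds \eqref{lem1_e2}--\eqref{lem1_e4} are indeed obtained by splitting $f$ at scale $N_0$ and using Bernstein on~$g$. The only point worth flagging is that the symmetrization-based cancellation is actually only needed in the low--high--high paraproduct regime (where $g$'s frequency is small relative to $f$'s); in the other two regimes the paper estimates $\tilde\sigma$ directly without invoking it.
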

\begin{rem} \label{rem_lem1_e2e3}
Note that if $\hat f$ is localized to $|\xi|\gtrsim N_0$, then the low-frequency term 
$N_0^{2s+2} \|f \|_2^2 \|g\|_2$ can be dropped in \eqref{lem1_e2} and 
\eqref{lem1_e3}.
\end{rem}
\begin{proof}
 We first show \eqref{lem1_e1}. For simplicity of notation we shall write $R^{\perp} g$ as $g$. Note that
 in the final estimates the operator $R^{\perp}$ can be easily discarded since we are in the $L^2$ setting.
 On the Fourier side we express the LHS inside the absolute value as (up to a multiplicative
 constant)
\begin{align*}
\int |\xi|^{2s} e^{-t ( |\eta|^{\gamma} + |\xi-\eta|^{\gamma} - |\xi|^{\gamma} )}
\hat g(\eta) \cdot (\xi-\eta) \hat f (\xi-\eta) \hat f (-\xi) d\xi d\eta.
\end{align*}
Observe  that by a change of variable $\xi \to \eta -\tilde \xi$ (and dropping the tildes), we have
\begin{align*}
    & \int ( \xi-\eta) |\xi|^{2s} e^{- t (|\xi-\eta|^{\gamma} - |\xi|^{\gamma} )} 
    \hat f(\xi-\eta) \hat f(-\xi) d\xi \notag \\
    = & \frac 12 
    \int \left( (\xi-\eta) |\xi|^{2s} e^{-t (|\xi-\eta|^{\gamma} - |\xi|^{\gamma} )}
    - \xi |\xi-\eta|^{2s}
    e^{-t ( |\xi|^{\gamma} - |\xi-\eta|^{\gamma})} \right) \hat f(\xi-\eta) \hat f(-\xi) d\xi.
    \end{align*}
Denote 
\begin{align*}
&\tilde \sigma(\xi,\eta)= e^{-t |\eta|^{\gamma}}
\biggl( (\xi-\eta) |\xi|^{2s} e^{-t (|\xi-\eta|^{\gamma} - |\xi|^{\gamma} )}
    - \xi |\xi-\eta|^{2s}
    e^{-t ( |\xi|^{\gamma} - |\xi-\eta|^{\gamma})} \biggr), \\
& N(g^1,g^2,g^3)  = \int \tilde \sigma(\xi,\eta) \widehat{g^1}(\eta) \widehat{g^2}(\xi-\eta) 
\widehat{g^3}(-\xi) d\xi d\eta.
\end{align*}
We just need to bound $N(g,f,f)$. By frequency localization, we have
\begin{align*}
N(g,f,f) &= \sum_j \Bigl(N(g_{<j-9}, f_j, f) + N(g_{>j+9}, f_{j},f ) + N(g_{[j-9,j+9]}, f_j, f) \Bigr).
\end{align*}
Rewriting $\sum_j N(g_{>j+9}, f_j, f) = \sum_j N(g_j, f_{<j-9}, f)$, we obtain
\begin{align*}
N(g,f,f) &=
\sum_j \Bigl(N(g_{<j-9}, f_j, f) + N(g_{j}, f_{<j-9},f ) + N(g_{[j-9,j+9]}, f_j, f) \Bigr)
\notag \\
 &= \sum_j \Bigl(N(g_{<j-9}, f_j, f_{[j-2,j+2]}) + N(g_{j}, f_{<j-9},f_{[j-2,j+2]} ) + N(g_{[j-9,j+9]}, f_j, f_{\le j+11} ) \Bigr)
 \notag \\
&= \sum_j \Bigl(  N(g_{\ll j}, f_{\sim j}, f_{\sim j}) + N(g_{\sim j}, f_{\ll j}, 
 f_{\sim j}) + N(g_{\sim j}, f_{\sim j}, f_{\lesssim j} ) \Bigr),
\end{align*}
where $g_{\ll j}$ corresponds to $|\eta| \ll 2^j$, $g_{\sim j}$ means $|\eta| \sim 2^j$, and $g_{\lesssim j}$
means $|\eta|\lesssim 2^j$. These notations are quite handy since only the relative sizes of the frequency variables $\eta$,
$\xi$ and $\xi-\eta$ will play some role in the estimates. Note that we should have written
$g_{\ll j}$ as $g_{\{l: \, 2^l \ll 2^j\} }$ according to our convention of the notation $\ll$ but we ignore
this slight inconsistency here for the simplicity of notation.

1. Estimate of $N(g_{\ll j}, f_{\sim j}, f_{\sim j})$. Note that $|\eta| \ll 2^j$, $|\xi-\eta| \sim |\xi| \sim 2^j$.
It is not difficult to check that in this regime
\begin{align*}
|\tilde \sigma(\xi,\eta)| 
&\le | (\xi-\eta)|\xi|^{2s} - \xi |\xi-\eta|^{2s} | \cdot 
e^{-t (|\eta|^{\gamma}+|\xi-\eta|^{\gamma}-|\xi|^{\gamma})}
+|\xi| |\xi-\eta|^{2s} \cdot
|e^{-t(|\eta|^{\gamma}+|\xi-\eta|^{\gamma}-|\xi|^{\gamma})}
-e^{-t (|\xi|^{\gamma}+|\eta|^{\gamma} -|\xi-\eta|^{\gamma} )} | \notag \\
& \lesssim 
2^{2js} \cdot |\eta|+|\xi| |\xi-\eta|^{2s}
|e^{-t(|\eta|^{\gamma}+|\xi-\eta|^{\gamma}-|\xi|^{\gamma})}
-e^{-t (|\xi|^{\gamma}+|\eta|^{\gamma} -|\xi-\eta|^{\gamma} )} |.
\end{align*}

Note that the second term can be bounded as (below $c>0$ is a suitably small constant)
\begin{align}
&|\xi| |\xi-\eta|^{2s}
|e^{-t(|\eta|^{\gamma}+|\xi-\eta|^{\gamma}-|\xi|^{\gamma})}
-e^{-t (|\xi|^{\gamma}+|\eta|^{\gamma} -|\xi-\eta|^{\gamma} )} | \notag \\
\lesssim  &2^{j(1+2s)} \cdot e^{-ct |\eta|^{\gamma}} \cdot t |\xi|^{\gamma-1} \cdot |\eta| \notag \\
\lesssim & 2^{j(2s+\gamma)} \cdot t |\eta| e^{-ct |\eta|^{\gamma}}. \notag 
\end{align}

To ease the notation, we write 
\begin{align}
|\sum_j N(g_{\ll j}, f_{\sim j}, f_{\sim j} ) | \lesssim \sum_j (N^{(1)} + N^{(2)}),
\end{align}
where $N^{(1)}$ is the contribution due to the term $2^{2js}|\eta|$, and $N^{(2)}$ is the
contribution due to the term  $2^{j(2s+\gamma)} \cdot t |\eta| e^{-ct |\eta|^{\gamma}}$.

Then by taking $\tilde \epsilon=\gamma$ below (note that $0<\gamma<1$), we get
(below ``$*$" denotes the usual convolution)
\begin{align*}
 \sum_j N^{(1)} 
& \lesssim \sum_j 2^{2js} \cdot \|f_{\sim j} \|_2 
\cdot ( \| \; | \widehat{Dg_{\ll j} }| * |\widehat{f_{\sim j} }|) \|_2
) \notag \\
& \lesssim \sum_j 2^{2js} \cdot \| f_{\sim j} \|_2 \cdot (\| D g_{\ll j} \|_{\dot H^{1-\tilde \epsilon} } 
\cdot \| f_{\sim j} \|_{\dot H^{\tilde \epsilon} } ) \notag \\
& \lesssim \|g \|_{\dot H^s} \| f \|_{\dot H^{s+\frac{\gamma}2} }^2.
\end{align*}
Here in the second inequality above, we have used the simple fact that 
\begin{align*}
\| |\hat A| *  |\hat B| \|_{L_{\xi}^2}
\lesssim \| \hat A \|_{L_{\xi}^1} \| \hat B \|_{L_{\xi}^2}
\lesssim \| |\xi|^{\theta} \hat A \|_{L_{\xi}^2} \| |\xi|^{1-\theta}\hat B \|_{L_{\xi}^2},
\end{align*}
if $\operatorname{supp}(\hat A) \subset \{ |\xi| \lesssim 1 \}$, 
$\operatorname{supp}(\hat B) \subset\{ |\xi| \sim 1 \}$, and $\theta<1$. 

To bound $N^{(2)}$, we note that
\begin{align}
 t \int |\eta| e^{-ct |\eta|^{\gamma}} |\widehat{g}(\eta)| d\eta 
 & \lesssim t \int  e^{-ct|\eta|^{\gamma}} |\eta|^{\gamma} \cdot |\eta|^{3-\gamma} |\widehat g(\eta)| 
 \frac {d\eta} {|\eta|^2} \notag \\
 & \lesssim t  \left(\int  e^{-2 ct |\eta|^{\gamma} } |\eta|^{2\gamma} \frac {d \eta} {|\eta|^2} \right)^{\frac 12}
 \cdot \left( \int |\eta|^{2(3-\gamma)} |\widehat g (\eta)|^2 \frac {d\eta} {|\eta|^2} \right)^{\frac 12} \notag \\
& \lesssim \| g \|_{\dot H^{2-\gamma}}.
 \end{align}
 This easily yields
 \begin{align}
 \sum_{j} N^{(2)} \lesssim \| g \|_{\dot H^s} \| f \|_{\dot H^{s+\frac {\gamma}2}}^2.
 \end{align}
 Thus
 \begin{align}
 |\sum_j N(g_{\ll j}, f_{\sim j}, f_{\sim j} ) | \lesssim 
 \| g \|_{\dot H^s} \| f \|_{\dot H^{s+\frac {\gamma}2}}^2. \notag
 \end{align}

2. Estimate of $N(g_{\sim j}, f_{\ll j}, f_{\sim j})$. In this case $|\eta| \sim |\xi| \sim 2^j$, $|\xi-\eta| \ll 2^j$. Since $s=2-\gamma\in (1,2)$, in this regime we have
\begin{align*}
|\tilde \sigma(\xi,\eta)| \lesssim |\xi-\eta| 2^{2js} + 2^{j} |\xi-\eta|^{2s} 
\lesssim 2^{2js} |\xi-\eta|.
\end{align*}
Then
\begin{align*}
| \sum_j N(g_{\sim j}, f_{\ll j}, f_{\sim j} ) |
& \lesssim \sum_j 2^{2js} \| \,  |\widehat{g_{\sim j} } | *
 |\widehat{D f_{\ll j}} | \, \|_2 
\cdot \| f_{\sim j} \|_2 \notag \\
&\lesssim \sum_j 2^{2js} \| g_{\sim j} \|_{\dot H^{\frac{\gamma}2 }}
\cdot \| D f_{\ll j} \|_{\dot H^{1-\frac{\gamma}2}}  \|f_{\sim j}\|_2
\lesssim \| g \|_{\dot H^s} \| f \|_{\dot H^{s+\frac{\gamma}2 } }^2.
\end{align*}

3. Estimate of $N(g_{\sim j}, f_{\sim j}, f_{\lesssim  j})$. In this case $|\eta| \sim |\xi-\eta| \sim 2^j$, $|\xi| \lesssim 2^j$, and
\begin{align*}
| \sum_j N(g_{\sim j}, f_{\sim j}, f_{\lesssim j } ) | &\lesssim 
 \sum_j 2^{2js} \|  |\widehat{g_{\sim j}}| * |\widehat{f_{\sim j} } |  \|_{L^{\infty}_{\xi} } \| \widehat{D f_{\lesssim j} } \|_{L^1_{\xi}} \notag \\
 & \lesssim \sum_j 2^{2js} \| g_{\sim j} \|_2 \| f_{\sim j} \|_2
 \cdot \| |\xi|^{1-\frac{\gamma} 2} 
 \widehat{Df_{\lesssim j } } \|_{L^2_{\xi}} \cdot 2^{j\frac{\gamma}2 } \notag \\ 
& \lesssim \| g \|_{\dot H^s} \| f\|_{\dot H^{s+\frac{\gamma} 2} }^2.  
 \end{align*}

 Now we turn to \eqref{lem1_e2}. Choose $J_0\in \mathbb Z$ such that $2^{J_0-1} \le N_0 <2^{J_0}$.
  Clearly by frequency localization,
 \begin{align*}
 N(g,f,f) = N(g, f_{\lesssim J_0}, f_{\lesssim J_0} ) +\sum_{j:\, 2^j \gg N_0} N(g, f_{\sim j}, f_{\sim j} ).
 \end{align*}
 For the first term we have
 \begin{align*}
 | N(g,f_{\lesssim J_0}, f_{\lesssim J_0} ) | &\lesssim N_0^{2s+1} 
 \| |\widehat{g} | *|\widehat{f_{\lesssim J_0} }| \|_{L_{\xi}^{\infty}}  \| \widehat {f_{\lesssim J_0} } \|_{L_{\xi}^1}
 \notag  \\
 & \lesssim N_0^{2s+2} \| g \|_2 \| f \|_2^2. 
 \end{align*}
 For the second term we can use the estimate of $N(g_{\ll j}, f_{\sim j}, f_{\sim j})$ and 
 break it into $N^{(1)}$ and $N^{(2)}$.
 For $N^{(1)}$, we   take $
 \tilde \epsilon=\frac{\gamma}2$  to get
 \begin{align*}
 \sum_{j: 2^j\gg N_0} N^{(1)}
 & \lesssim \sum_{j:\, 2^j \gg N_0} 2^{2js} \cdot \| f_{\sim j} \|_2
 \cdot ( \| D g \|_{\dot H^{1-\tilde \epsilon} } \| f_{\sim j} \|_{\dot H^{\tilde \epsilon} })
 \notag \\
 & \lesssim \|f \|_{\dot H^s} \cdot \| f \|_{\dot H^{s+\frac{\gamma}2}} \cdot \| g\|_2\cdot N_0^{2-\frac{\gamma}2}.
 \end{align*}
 For $N^{(2)}$ we simply bound it as
 \begin{align}
 \sum_{j:\; 2^j \gg N_0} N^{(2)} \lesssim \| f \|_{\dot H^{s+\frac{\gamma}2}}^2 \cdot t N_0^2 \| g \|_2. \notag 
 \end{align}
 These easily imply \eqref{lem1_e2}.
 
 The estimates of \eqref{lem1_e3} and \eqref{lem1_e4} are much simpler. We omit the details.
\end{proof}

\section{Proof of theorem \ref{thm1} }
To simplify numerology we conduct the proof for the case $\epsilon_0=1/2$.  Throughout this proof
we shall denote $s=2-\gamma$. 

Step 1. A priori estimate.  Denote $A= \frac 12 D^{\gamma}$ and $f= e^{t A } \theta$. It will be clear from
Step 2 below that $f$ is smooth and well-defined, and 
the following computations can be rigorously justified.
 Then $f$ satisfies
the equation
\begin{align*}
\partial_t f = -\frac 12 D^{\gamma} f - e^{t A} ( R^{\perp} e^{-tA} f \cdot \nabla e^{-t A} f ).
\end{align*}
Take $J_0>0$ which will be made sufficiently large later. Set $N_0=2^{J_0}$. Then 
\begin{align*}
\frac 12 \frac d {dt} ( \| D^s P_{>J_0} f \|_2^2 )
+ \frac 12 \| D^{s+\frac{\gamma}2} P_{>J_0} f \|_2^2  
= & -\int D^s ( R^{\perp} e^{-t A} f \cdot \nabla e^{-tA} f ) D^s e^{t A} P_{>J_0}^2 f dx.
\end{align*}
Now for convenience of notation we denote 
\begin{align*}
N(g_1,g_2,g_3) = \int D^s ( R^{\perp} e^{-tA} g_1 \cdot \nabla e^{-tA} g_2) D^s e^{tA} g_3 dx.
\end{align*}
Denote $f_h = P_{>J_0}^2 f$ and $f_l = f-f_h$. Then clearly
\begin{align*}
N(f, f, f_h) =  N(f_h, f_h, f_h) + N(f_l, f_h,f_h) + N(f_h,f_l, f_h) + N(f_l,f_l, f_h).
\end{align*}
By Lemma \ref{lem1} and noting that $\|f_l(t)\|_2 \lesssim e^{ N_0^{\gamma} t} \| \theta_0\|_2$, we get (see Remark \ref{rem_lem1_e2e3})
\begin{align*}
|N(f,f,f_h) | &\lesssim \| f_h \|_{\dot H^s} \| f_h  \|_{\dot H^{s+\frac{\gamma}2} }^2
+ \|f_h \|_{\dot H^s} \| f_h \|_{\dot H^{s+\frac{\gamma}2} }
\cdot N_0^{2-\frac{\gamma}2} e^{N_0^{\gamma} t} \| \theta_0\|_2
+ \|f_h\|_{\dot H^s}^2 \cdot N_0^2 e^{N_0^{\gamma} t} \| \theta_0\|_2 \notag \\
&\quad  + N_0^{2s+2} e^{4N_0^{\gamma} t} \| \theta_0\|_2^3+ \|f_h\|_{\dot H^{s+\frac {\gamma}2}}^2
\cdot t N_0^2 e^{N_0^2 t} \| \theta_0\|_2 .
\end{align*}
This implies for $0<t\le \min\{N_0^{-\gamma},  N_0^{-2} \tfrac {0.01}{1+\|\theta_0\|_2} \}$, 
\begin{align*}
&\frac d {dt} ( \| D^s P_{>J_0} f \|_2^2) + \biggl(\frac 13-c_1 \| D^s P_{>J_0} f\|_{2} \biggr) \cdot \| D^{s+\frac{\gamma}2} P_{>J_0} f \|_2^2 \notag \\
\le &\; \underbrace{c_2 \cdot (N_0^2 \| \theta_0\|_2 +N_0^{4-\gamma} \| \theta_0 \|_2^2) }_{=:\beta} 
\cdot\| D^s P_{>J_0} f \|_{2}^2
+c_3 N_0^{2s+2} \| \theta_0\|_2^3,
\end{align*}
where $c_1, c_2, c_3>0$ are  constants depending on $\gamma$. 

Thus as long as $\sup_{0\le s \le t} c_1 \| D^s P_{>J_0} f(s) \|_2 <\frac 1 {10}$ and $t\le 
\min\{N_0^{-\gamma},  N_0^{-2} \tfrac {0.01}{1+\|\theta_0\|_2} \}$,  we obtain
\begin{align} \label{4.1c}
\sup_{0\le s \le t} \| D^s P_{>J_0} f(s) \|_2^2 
\le  e^{\beta t} \|D^s P_{>J_0} \theta_0 \|_2^2  +t e^{\beta t} c_3 N_0^{2s+2} \| \theta_0\|_2^3. 
\end{align}

In particular, for any prescribed small constant $\epsilon_0>0$, we can first choose $J_0$ sufficiently large such that
\begin{align}
 \| D^s P_{>J_0} \theta_0 \|_2 < \frac 1 {2} \epsilon_0.
 \end{align}
 Then by using \eqref{4.1c} and choosing $T_0= T_0(J_0, \theta_0, \epsilon_0)$ sufficiently small we can guarantee 
 \begin{align} \label{4.3bb}
 \sup_{0\le s \le T_0} \| D^s P_{>J_0} f(s) \|_2 < \epsilon_0. 
 \end{align}

Step 2. Approximation system. For $n=1,2,3,\cdots$, define $\theta^{(n)}$ as solutions to the system
\begin{align*}
\begin{cases}
\partial_t \theta^{(n)} =- P_{<n} ( R^{\perp} P_{<n} \theta^{(n)} \cdot \nabla P_{<n} \theta^{(n)} ) -
D^{\gamma}  \theta^{(n)}, \\
\theta^{(n)} \bigr|_{t=0} =P_{<n} \theta_0.
\end{cases}
\end{align*}
The solvability of the above regularized system is not an issue thanks to frequency cut-offs. It is easy
to check that $\theta^{(n)}$ has frequency supported in $|\xi| \lesssim 2^n$ and $\| \theta^{(n)} \|_2 \le \| \theta_0 \|_2$.  In particular  for any $\tilde s \ge 0$ we have 
\begin{align} 
\| D^{\tilde s } P_{\le J_0} \theta^{(n) } \|_2 \lesssim 2^{c J_0 \tilde s}  \| \theta_0 \|_2,
\end{align}
where $c>0$ is a constant.

For any integer $J_0$ to be fixed momentarily, it is not difficult to check that
\begin{align*}
  & \frac 12 \frac d {dt} (\| D^s P_{>J_0} e^{tA} \theta^{(n)} \|_2^2)
  + \frac 12 \| D^{s+\frac{\gamma}2} P_{>J_0} e^{tA} \theta^{(n)} \|_2^2 \notag \\
  =&\; -
  \int D^s ( R^{\perp}P_{<n} \theta^{(n)} \cdot \nabla P_{<n} \theta^{(n)} )
  D^s e^{tA} P_{>J_0}^2  e^{tA} P_{<n} \theta^{(n)} dx.
  \end{align*}
Now fix $J_0$ sufficiently large such that
\begin{align*}
  c_1 \| D^s P_{>J_0} \theta_0 \|_2 < 1/10.
\end{align*}
By using the nonlinear estimate derived in Step 1 (easy to check that these estimates hold for $\theta^{(n)}$ with
slight changes of the constants $c_i$ if necessary), one can
then find $T_0=T_0(\gamma,\theta_0)>0$ sufficiently small such that uniformly in $n$ tending to infinity, we have
\begin{align*}
\sup_{0\le t \le T_0} \| e^{tA} D^s \theta^{(n)} (t,\cdot) \|_2 
+ \int_0^{T_0}  \| e^{tA} D^{s+\frac{\gamma}2} \theta^{(n)} (t,\cdot) \|_2^2 dt \lesssim 1.
\end{align*}
By slightly shrinking $T_0$ further  if necessary and repeating 
the argument for $\tilde A= \frac 43 A = \frac 23 D^{\gamma}$, 
we have uniformly in $n$ tending to infinity,
\begin{align*}
\sup_{0\le t \le T_0} \| e^{\frac 43 tA} D^s \theta^{(n)} (t,\cdot) \|_2  \lesssim 1.
\end{align*}
Furthermore for any prescribed small constant $\epsilon_0>0$, by using \eqref{4.3bb}, we can choose $J_0$ and
$T_0$ such that uniformly in $n$, 
\begin{align}  \notag
\sup_{0\le t \le T_0} \| D^s P_{>J_0} e^{tA} \theta^{(n)} \|_2 <\epsilon_0.
\end{align}
Note that this implies  
\begin{align} \label{4.4bb}
\sup_{0\le t \le T_0} \| D^s P_{>J_0} \theta^{(n)} \|_2 <\epsilon_0.
\end{align}
The estimate \eqref{4.4bb} will be needed later.

Step 3. Strong contraction of $\theta^{(n)}$ in $C_t^0 L_x^2$.  Denote 
$\eta_{n+1}= \theta^{(n+1)}-\theta^{(n)}$. Then (below for simplicity of notation
we write $-R^{\perp}$ as $R$)
\begin{align}
\partial_t \eta_{n+1}
&= P_{<n+1}( RP_{<n+1} \eta_{n+1} \cdot \nabla P_{<n+1} \eta_{n+1}) - D^{\gamma}
\eta_{n+1} \notag \\
& \quad + P_{<n+1} (R P_{<n+1} \theta^{(n)}
\cdot \nabla P_{<n+1}  \theta^{(n)} ) - P_{<n} (R P_{<n} \theta^{(n)}
\cdot \nabla P_{<n}  \theta^{(n)} ) \label{eq_219jan1_e00}\\
& \quad + P_{<n+1} ( RP_{<n+1} \theta^{(n)}  \cdot \nabla P_{<n+1} \eta_{n+1} ) \notag \\
& \quad +P_{<n+1} ( R P_{<n+1} \eta_{n+1} \cdot \nabla P_{<n+1} \theta^{(n)} ).
\notag
\end{align}
By using the divergence-free property, we have
\begin{align*}
\int \eqref{eq_219jan1_e00} \cdot 
\eta_{n+1} dx & = -\int 
\biggl( P_{<n+1} (R P_{<n+1} \theta^{(n)}
 P_{<n+1}  \theta^{(n)} ) - P_{<n} (R P_{<n} \theta^{(n)}
  P_{<n}  \theta^{(n)} ) \biggr)\cdot \nabla \eta_{n+1} dx.
\end{align*}
Clearly
\begin{align*}
& \|P_{<n+1} (R P_{<n+1} \theta^{(n)}
 P_{<n+1}  \theta^{(n)} ) - P_{<n} (R P_{<n} \theta^{(n)}
  P_{<n}  \theta^{(n)} ) \|_2 \notag \\
\le &\; \| P_{<n+1} ( R(P_{<n+1}-P_{<n}) \theta^{(n)}  P_{<n+1} \theta^{(n)} ) \|_2
+ \| P_{<n+1} (  RP_{<n} \theta_n  (P_{<n+1} -P_{<n}) \theta^{(n)} ) \|_2 \notag \\
& \quad + \| (P_{<n+1}-P_{<n}) (RP_{<n} \theta^{(n)} P_{<n} \theta^{(n)} ) \|_2 \notag \\
\lesssim & \; 2^{-n(2-\gamma)} \| \theta^{(n)}\|_{H^{2-\gamma} }^2 + 
2^{- n} \| \theta^{(n)}\|_{H^{2-\gamma}}^2 \lesssim 2^{-n},
\end{align*}
where we have used the uniform Sobolev estimates in Step 2. Note
that 
$$\| \nabla \eta_{n+1} \|_2 \lesssim \| \theta^{(n)} \|_{H^{2-\gamma}}
+ \| \theta^{(n+1)} \|_{H^{2-\gamma}} \lesssim 1.$$

It follows that
\begin{align*}
\frac 12 \frac d {dt}
\| \eta_{n+1}\|_2^2 + \| D^{\frac {\gamma} 2} \eta_{n+1} \|_2^2
&\le \int (RP_{<n+1} \eta_{n+1} \cdot \nabla P_{<n+1} \theta^{(n)} ) P_{<n+1} \eta_{n+1}
dx \;+\operatorname{const}\cdot 2^{-n} \notag \\
&\le \operatorname{const}\cdot 2^{-n} + \int  (RP_{<n+1} \eta_{n+1} \cdot \nabla P_{>J_0} P_{<n+1} \theta^{(n)} ) P_{<n+1} \eta_{n+1}
dx \notag \\
&\quad + \int (RP_{<n+1} \eta_{n+1} \cdot \nabla  P_{\le J_0} P_{<n+1} \theta^{(n) }) P_{<n+1} \eta_{n+1}
dx \notag \\
&\lesssim 2^{-n}+  \| \eta_{n+1} \|_{(\frac 12 -\frac{\gamma} 4)^{-1}}^2 
\cdot \| \nabla P_{>J_0} P_{<n+1} \theta^{(n)} \|_{\frac 2 {\gamma}} \notag \\
& \quad + \| \eta_{n+1} \|_2^2 \cdot 2^{2J_0} \| \theta^{(n)} \|_2 \notag \\
&\lesssim 2^{-n}+ \| D^{\frac{\gamma}2} \eta_{n+1} \|_2^2 
\cdot \| D^s P_{>J_0} \theta^{(n)} \|_2 + 2^{2J_0} \| \eta_{n+1} \|_2^2.
\end{align*}
By using the nonlinear estimates in Step 2 and \eqref{4.4bb}, one can choose $J_0$ sufficiently large 
(and slightly shrink $T_0$  further if necessary) such that
the term $\| D^s P_{>J_0} \theta^{(n)}\|_2 $ becomes sufficiently small (to kill the
implied constant pre-factors in the above inequality). This implies 
\begin{align}
 \frac d {dt} \| \eta_{n+1} \|_2^2 \lesssim 2^{-n}  + 2^{2J_0} \| \eta_{n+1} \|_2^2.
 \end{align}
Thus for some constants $\tilde c_1>0$, $\tilde c_2>0$, we have
\begin{align}
\sup_{0\le t \le T_0} \| \eta_{n+1} \|_2^2  & \le e^{\tilde c_1 \cdot 2^{2J_0} T_0}  \| \eta_{n+1}(0) \|_2^2
 + e^{\tilde c_1 \cdot 2^{2J_0} T_0} \cdot 2^{-n} \tilde c_2,
 \end{align}
 The desired  strong contraction of $\theta^{(n)} \to \theta$ in $C_t^0 L_x^2$ follows easily.

Step 4. Higher norms. By using the estimates in previous steps, we have 
for any $0\le t\le T_0$,
\begin{align*}
\| D^s e^{\frac 43t A} \theta \|_2^2 \le \limsup_{N\to \infty} \| D^s e^{\frac 43 tA} P_{\le N} \theta \|_2^2
= \limsup_{N\to \infty} \lim_{n \to \infty} \| D^s e^{\frac 43 t  A} P_{\le N} \theta^{(n)} (t) \|_2^2 < B_1<\infty,
\end{align*}
where the constant $B_1>0$ is independent of $t$. 

 It follows easily that for any $0\le s^{\prime}<s$, 
\begin{align*}
\| D^{s^{\prime}} e^{tA} (\theta^{(n)}(t) - \theta(t) ) \|_{L_t^{\infty} L_x^2} \to 0, \quad \text{as $n\to \infty$},
\end{align*}
This implies $f(t) =e^{tA} \theta(t) \in C_t^0 H_x^{s^{\prime}}$ for any $s^{\prime}<s$. To show $f \in C_t^0 H^s_x$
it suffices to consider the continuity at $t=0$ (for $t>0$ one can use the fact $e^{\frac 13 t A } f \in L_t^{\infty} H^s_x$ which
controls frequencies $|\xi| \gg t^{-1/\gamma}$, and for the part $|\xi| \lesssim t^{-1/\gamma} $ one uses $C_t^0 L_x^2$). 
Since we are in the Hilbert space setting with
 weak continuity in time,  the strong continuity then follows from norm continuity at $t=0$ which is essentially done in Step 1.

\section{Nonlinear estimates for Besov case: $0<\gamma<1$}

For $\sigma=\sigma(\xi,\eta)$ we denote the bilinear operator
\begin{align*}
T_{\sigma}(f,g)  (x) = \int_{\mathbb R^d_{\eta}} \int_{\mathbb R^d_{\xi}}
 \sigma(\xi,\eta) \hat f(\xi) \hat g(\eta) e^{i x \cdot (\xi+\eta)} d\xi d\eta.
\end{align*}
\begin{lem} \label{lem_b0}
Suppose $\operatorname{supp} (\sigma) \subset \{ (\xi,\eta):\, |\xi|< 1, \; \frac 1 {C_1}< |\eta| <C_1\}$ 
for some constant $C_1>0$. Let $n_0 =2d+[d/2]+1 $ and $\Omega_0= \{(\xi,\eta): \, 0<|\xi|<1, \;
\frac 1 {C_1} <|\eta|<C_1 \}$.  Suppose $\sigma \in C^{n_0}_{\operatorname{loc}} (\Omega_0)$ and
for some $A_1>0$
\begin{align*}
 \sup_{ \substack{|\alpha|\le [d/2]+1\\ |\beta|\le 2d} }  \sup_{(\xi,\eta) \in \Omega_0} 
  |\xi|^{|\alpha|} |\eta|^{|\beta|} 
  |\partial_{\xi}^{\alpha} \partial_{\eta}^{\beta} \sigma(\xi,\eta)| \le A_1.
  \end{align*}
 Then for any $1<p_1<\infty$, $1\le p_2\le \infty$,  $f,g \in \mathcal
S(\mathbb R^d)$, 
\begin{align*}
\| T_{\sigma}(f,g) \|_r\lesssim_{d,C_1,A_1,p_1,p_2}  \| f \|_{p_1} \| g\|_{p_2},
\end{align*}
where $\frac 1r = \frac 1 {p_1} + \frac 1 {p_2}$. 

Similarly if  $\operatorname{supp} (\sigma) \subset \{ (\xi,\eta):\,  \frac 1 {\tilde C_1}
<|\xi|<\tilde C_1, \; \frac 1 {\tilde C_2}< |\eta| <\tilde C_2\}=\Omega_1$ 
for some constants $\tilde C_1$, $\tilde C_2>0$. Suppose 
$\sigma \in C^{4d+1}_{\operatorname{loc}} (\Omega_1)$ and
for some $\tilde A_1>0$
\begin{align*}
 \sup_{ |\alpha|+|\beta|\le 4d+ 1}  \sup_{(\xi,\eta) \in \Omega_1} 
  |\xi|^{|\alpha|} |\eta|^{|\beta|} 
  |\partial_{\xi}^{\alpha} \partial_{\eta}^{\beta} \sigma(\xi,\eta)| \le \tilde A_1.
  \end{align*}
 Then for any $1\le p_1 \le \infty$, $1\le p_2\le \infty$,  $f,g \in \mathcal
S(\mathbb R^d)$, 
\begin{align*}
\| T_{\sigma}(f,g) \|_r\lesssim_{d,\tilde C_1,\tilde C_2, \tilde A_1, p_1,p_2}  \| f \|_{p_1} \| g\|_{p_2},
\end{align*}
where $\frac 1r = \frac 1 {p_1} + \frac 1 {p_2}$.


\end{lem}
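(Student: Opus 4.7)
The plan is to prove Part 2 first (the easier symmetric case) by a standard Coifman--Meyer Fourier series expansion, then handle Part 1 by an asymmetric version that avoids Fourier-expanding across the $\xi$-singularity.

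For Part 2, let $\chi(\xi)$ and $\tilde\chi(\eta)$ be smooth cutoffs equal to $1$ on the $\xi$- and $\eta$-supports of $\sigma$ and contained in slightly larger annuli. Fourier-expand $\sigma$ on a box $[-R,R]^{2d}$ containing these supports:
\[
\sigma(\xi,\eta) = \chi(\xi)\tilde\chi(\eta) \sum_{k,l\in\mathbb Z^d} a_{k,l}\, e^{i\pi k\cdot\xi/R}\, e^{i\pi l\cdot\eta/R}.
\]
Integration by parts using $|\partial_\xi^\alpha\partial_\eta^\beta\sigma|\lesssim \tilde A_1$ on the support gives $|a_{k,l}|\lesssim (1+|k|)^{-N_1}(1+|l|)^{-N_2}$ for any $N_1+N_2\le 4d+1$; choosing $N_1=N_2=2d$ makes both factors summable. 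Then
\[
T_\sigma(f,g)(x) = \sum_{k,l} a_{k,l}\, (Pf)(x + \pi k/R)\,(Qg)(x + \pi l/R),
\]
with $P,Q$ convolutions against the Schwartz functions $\mathcal F^{-1}\chi,\mathcal F^{-1}\tilde\chi$, hence bounded on every $L^p$. H\"older and summing in $k,l$ give the desired bound for all $1\le p_1,p_2\le\infty$.

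For Part 1, a direct Fourier expansion in $\xi$ fails because $\partial_\xi^\alpha\sigma$ blows up like $|\xi|^{-|\alpha|}$ at $\xi=0$, which precludes decay of coefficients in $k$. The remedy is to expand only in $\eta$: set
\[
c_l(\xi) = (4C_1)^{-d}\int_{[-2C_1,2C_1]^d} \sigma(\xi,\eta)\,e^{-i\pi l\cdot\eta/(2C_1)}\, d\eta,
\]
so that $\sigma(\xi,\eta)=\tilde\phi(\eta)\sum_l c_l(\xi) e^{i\pi l\cdot\eta/(2C_1)}$ for a smooth cutoff $\tilde\phi\equiv 1$ on the $\eta$-support of $\sigma$. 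Integrating by parts $2d$ times in $\eta$ (using $|\partial_\eta^\beta\sigma|\lesssim A_1$ for $|\beta|\le 2d$) yields $|c_l(\xi)|\lesssim A_1(1+|l|)^{-2d}$, and differentiating first in $\xi$ before integrating by parts gives
\[
|\partial_\xi^\alpha c_l(\xi)|\lesssim A_1(1+|l|)^{-2d}\,|\xi|^{-|\alpha|},\qquad |\alpha|\le [d/2]+1,\ 0<|\xi|<1.
\]
Thus each $c_l$ is a H\"ormander--Mihlin multiplier on $\mathbb R^d$ with norm $\lesssim A_1(1+|l|)^{-2d}$; the Mihlin theorem produces $L^{p_1}$-bounded multiplier operators $M_l$ for every $1<p_1<\infty$ with the same norm. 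Writing
\[
T_\sigma(f,g)(x) = \sum_l (M_l f)(x)\cdot \bigl((\mathcal F^{-1}\tilde\phi)\ast g\bigr)\bigl(x + \pi l/(2C_1)\bigr),
\]
H\"older in $(p_1,p_2)$ together with summability of $\sum_l (1+|l|)^{-2d}$ concludes.

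The main obstacle is precisely the $\xi$-singularity in Part 1: one cannot expand symmetrically, and one must recognize that \emph{uniform} Mihlin control in $\xi$, requiring exactly the $[d/2]+1$ $\xi$-derivatives assumed, substitutes for the missing Fourier expansion. This also explains why $1<p_1<\infty$ is required in Part 1 (Mihlin fails at the endpoints) whereas Part 2 accommodates the full range $1\le p_1,p_2\le\infty$ since both variables are bounded away from the origin. The remaining technicalities---absolute summability of the Fourier coefficients and justifying the interchange of sum with multiplier action on Schwartz data---are routine.
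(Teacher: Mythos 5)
Your argument is correct and follows essentially the same route as the paper (which in turn follows Theorem 3.7 of \cite{BMS15}): expand only in the $\eta$ variable for Part 1 so that the singular $\xi$-dependence is captured by a family of uniformly Mihlin multipliers $c_l(\xi)$ with coefficients decaying like $(1+|l|)^{-2d}$, and expand in both variables for Part 2. One point you gloss over: since $\tfrac1r=\tfrac1{p_1}+\tfrac1{p_2}$ allows $r\in(\tfrac12,1)$, the sum over $l$ must be closed using the $r$-quasi-triangle inequality $\|\sum_l F_l\|_r^r\le\sum_l\|F_l\|_r^r$ rather than Minkowski, so what you actually need is $\sum_l(1+|l|)^{-2dr}<\infty$, i.e.\ $2dr>d$; this holds precisely because $r>\tfrac12$, which is the observation the paper makes explicit when it remarks that the sum ``converges in $l^r$-norm.'' Your decay rate is strong enough, so this is a bookkeeping remark rather than a gap.
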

\begin{proof}
For the first case see Theorem 3.7 in \cite{BMS15}. The idea is to make a Fourier expansion in the $\eta$-variable:
\begin{align*}
\sigma(\xi, \eta) =  \sum_{ k \in \mathbb Z^d} L^{-d} \int_{[-\frac L2,\frac L2]^d} \sigma(\xi, \tilde \eta)
e^{- 2\pi i \frac{k \cdot \tilde \eta} L } d\tilde \eta  \;  e^{2\pi i \frac{k \cdot \eta}L } \chi (\eta),
\end{align*}
where $L=8C_1$ and $\chi \in C_c^{\infty}( (-\frac L4,\frac L4)^d) $ is such that $\chi(\eta) \equiv 1$ for $1/C_1<|\eta|<C_1$. 
 A rough estimate
on the number of derivatives required is $n_0=2d+[d/2]+1$. Note that $r>1/2$ and (by paying $2d$ derivatives) $2dr >d$ so that the resulting summation
in $k$ converges in $l^r$-norm.  For the second case, one can make a Fourier expansion in $(\xi,\eta)$. 
\end{proof}
\begin{rem} \label{Rem5.2}
For $t>0$, $0<\gamma<1$, $j \in \mathbb Z$, consider
\begin{align*}
\sigma_0(\xi,\eta) = e^{-t (|\xi|^{\gamma} + |\eta|^{\gamma} - |\xi +\eta|^{\gamma}  )} 
\chi_{|\xi| \sim 2^j} \chi_{|\eta| \sim 2^j} \chi_{|\xi+\eta|\ll 2^j}.
\end{align*}
By using the estimates $\| \mathcal F^{-1} ( e^{t|\xi|^{\gamma} } \chi_{|\xi| \ll 1} ) \|_1 
=\| \mathcal F^{-1} ( e^{|\xi|^{\gamma} }
\chi_{|\xi| \ll t^{\frac 1{\gamma} }} ) \|_1 \lesssim e^{ \tilde ct} $ ($\tilde c\ll 1$),
$\|\mathcal F^{-1} (e^{-t |\xi|^{\gamma} } \chi_{|\xi| \sim 1} ) \|_1 \lesssim e^{- Ct} $ ($C\sim 1$), we have
for any $1\le r,p_1,p_2\le \infty$ with $\frac 1r=\frac 1 {p_1}+\frac 1{p_2}$, 
\begin{align*}
\| T_{\sigma_0} (f,g) \|_r \lesssim_{\gamma,d} e^{-c 2^{j\gamma} t} \| f \|_{p_1} \| g\|_{p_2}
\lesssim  \|f\|_{p_1} \|g\|_{p_2},
\end{align*}
where $c>0$ is a small constant.  Denote
\begin{align*}
&\sigma_1(\xi,\eta) = e^{-t(|\xi|^{\gamma} + |\eta|^{\gamma} - |\xi+\eta|^{\gamma} )}
\chi_{|\xi|\ll 2^j} \chi_{|\eta| \sim 2^j}, \\
&\sigma_2(\xi,\eta)= e^{-t( |\xi|^{\gamma} +|\eta|^{\gamma} - |\xi+\eta|^{\gamma} )}
\chi_{|\xi|\sim 2^j} \chi_{|\eta| \ll 2^j}, \\
&\sigma_3(\xi,\eta)=  e^{-t( |\xi|^{\gamma} +|\eta|^{\gamma} - |\xi+\eta|^{\gamma} )}
\chi_{|\xi|\sim 2^j} \chi_{|\eta|\sim 2^j} \chi_{|\xi+\eta| \sim 2^j}.
\end{align*}
By using Lemma \ref{lem_b0}, Lemma \ref{lem_phase_1} and some elementary computations, it is not difficult to check that for 
any $\frac 12<r<\infty$, $1<p_1,p_2<\infty$,  with $\frac 1r=\frac 1{p_1}+\frac 1 {p_2}$,
\begin{align*}
\| T_{\sigma_l} (f,g) \|_{r} \lesssim_{\gamma,p_1,p_2,d} \| f \|_{p_1} \|g \|_{p_2}, \quad\forall\, l=1,2,3.
\end{align*}
We shall need to use these inequalities (sometimes without explicit mentioning) below.
\end{rem}
\hrule
\bigskip

Fix $t>0$, $j\in \mathbb Z$, $0<\gamma<1$, and denote
\begin{align*}
  B_j(f,g)  
=& [P_j e^{t D^{\gamma} }, e^{-t D^{\gamma}} R^{\perp} f ]\cdot \nabla e^{-t D^{\gamma} } g \notag \\
=&  P_je^{t D^{\gamma}} ( e^{-tD^{\gamma} }R^{\perp} f \cdot \nabla e^{-t D^{\gamma}} g)
- e^{-t D^{\gamma} }R^{\perp} f \cdot \nabla P_j g.
\end{align*}
For integer $J_0 \ge 10$ which will be made sufficiently large later, we decompose
\begin{align}
& \;\;B_j(f, g) \notag \\
&= B_j(f_{\le J_0+2}, g_{\le J_0+4} ) + B_j(f_{\le J_0+2}, g_{>J_0+4}) 
+B_j(f_{>J_0+2}, g_{\le J_0+2}) + B_j(f_{>J_0+2}, g_{>J_0+2} ) \notag \\
&=B_j(f_{\le J_0+2}, g_{\le J_0+4} ) +B_j(f_{[J_0+2,J_0+4]}, g_{\le J_0+2}) \notag \\
& \qquad+ B_j(f_{>J_0+4}, g_{\le J_0+2})+ B_j(f_{\le J_0+2}, g_{>J_0+4}) 
 + B_j(f_{>J_0+2}, g_{>J_0+2} ). \notag 
\end{align}

\begin{lem} \label{lem_b1a}
$B_j(f_{\le J_0+2}, g_{\le J_0+4} )=0$ and $B_j(f_{[J_0+2,J_0+4]},
g_{\le J_0+2} )=0$ for $j>J_0+6$. For $j\le J_0+6$ and $1\le p<\infty$,
\begin{align*}
\| B_j(f_{\le J_0+2},  g_{\le J_0+4} ) \|_p + \| B_j(f_{[J_0+2,J_0+4]},
g_{\le J_0+2} )\|_p \lesssim e^{c_1(1+t) } (\| P_{\le  J_0+10} f \|_p^2+ \| P_{\le  J_0+10} g \|_p^2),
\end{align*}
where $c_1>0$ depends on $(J_0,p,\gamma)$.
\end{lem}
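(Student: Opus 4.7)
The vanishing part of the lemma is a direct Fourier-support check. In the first summand of $B_j(\cdot,\cdot)$, viewed as a bilinear Fourier multiplier with output frequency $\xi+\eta$, one has $|\xi+\eta|\le (7/6)(2^{J_0+2}+2^{J_0+4})<2^{J_0+5}$ in both decompositions, which is killed by $\phi(2^{-j}\cdot)$ whenever $j>J_0+6$. The second summand $e^{-tD^\gamma}R^\perp f\cdot\nabla P_j g$ vanishes for the same range of $j$ because $g$ is supported at frequencies $|\eta|\le(7/6)\,2^{J_0+4}$, hence $P_j g=0$.

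For $j\le J_0+6$ my plan is to split $B_j$ into its two summands and bound each via H\"older using an $L^\infty\cdot L^p$ split, putting the $R^\perp$-factor into $L^\infty$ and the $\nabla$-factor into $L^p$. All the operators now act on a fixed frequency ball $\{|\xi|\lesssim 2^{J_0}\}$. The outer operator $P_j e^{tD^\gamma}$ has symbol $\phi(2^{-j}\xi)e^{t|\xi|^\gamma}$, which is smooth and compactly supported, so its inverse Fourier transform lies in $L^1$ with norm bounded by $e^{c_1(J_0,\gamma)t}$; hence $\|P_j e^{tD^\gamma}h\|_p\le e^{c_1 t}\|h\|_p$. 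The gradient factor is controlled by Bernstein, $\|\nabla e^{-tD^\gamma} g_{\le J_0+4}\|_p\lesssim 2^{J_0}\|P_{\le J_0+10}g\|_p$ (and similarly for $\|\nabla P_j g_{\le J_0+4}\|_p$), using that $e^{-tD^\gamma}$ is an $L^p$ contraction.

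The one delicate factor is $\|e^{-tD^\gamma}R^\perp f_{\le J_0+2}\|_\infty$, since $R^\perp$ is unbounded on $L^1$. For $1<p<\infty$ I would combine the $L^p$-boundedness of $R^\perp$ with Bernstein $\|u\|_\infty\lesssim 2^{2(J_0+3)/p}\|u\|_p$. For the endpoint $p=1$ I would detour through $L^2$: Bernstein gives $\|u\|_\infty\lesssim 2^{J_0+3}\|u\|_2$, the $L^2$-boundedness of $R^\perp$ gives $\|u\|_2\le\|f_{\le J_0+2}\|_2$, and a second Bernstein $\|f_{\le J_0+2}\|_2\lesssim 2^{J_0+3}\|f_{\le J_0+2}\|_1$ closes the chain with an overall constant $\sim 2^{2(J_0+3)}$. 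Combining these ingredients and applying AM--GM, $\|f\|_p\|g\|_p\le\tfrac12(\|f\|_p^2+\|g\|_p^2)$, then yields the stated bound with $c_1=c_1(J_0,p,\gamma)$.

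I do not anticipate any genuine obstacle here: the entire lemma is a low-frequency estimate exploiting that everything lives in a fixed frequency ball of radius $\sim 2^{J_0}$. The only subtle step is the $p=1$ endpoint, where the Riesz transform fails to be $L^1$-bounded; this is cleanly resolved by the $L^2$ detour above.
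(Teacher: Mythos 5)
Your proposal is correct and follows essentially the same route as the paper: check the Fourier-support overlap for the vanishing claim, and for $j\le J_0+6$ use H\"older plus Bernstein at fixed low frequency together with the $L^1$-boundedness of the kernel of $P_je^{tD^\gamma}$. The one place you diverge is the H\"older split: you put the Riesz factor into $L^\infty$ (then detour via $L^2$ at $p=1$), whereas the paper puts it into $L^{p+}$ and the gradient factor into $L^{\infty-}$, so that Riesz boundedness on $L^{p+}$ with $p+>1$ handles $p=1$ directly without the $L^2$ detour. Both work since everything lives in a ball $\{|\xi|\lesssim 2^{J_0}\}$; the paper's choice is marginally shorter at the endpoint, but the content is the same.
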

\begin{proof}
We only deal with $B_j(f_{\le J_0+2}, g_{\le J_0+4} )$ as the estimate for
$B_j(f_{[J_0+2,J_0+4]}, g_{\le J_0+2})$ is similar and therefore omitted.
Clearly for $j\le J_0+6$ (below the notation $\infty-$, $p+$ is defined
in the same way as in \eqref{eqx+x-}),
\begin{align*}
\| e^{-t D^{\gamma}} R^{\perp} f_{\le  J_0+2} \cdot \nabla P_j  g_{\le J_0+4} \|_p
\lesssim \| R^{\perp} f_{\le J_0+2}  \|_{p+} 2^{J_0} \| P_ {\le  J_0+10} g \|_{\infty-} \lesssim  2^{J_0(1+\frac 2p)} 
(\| P_{\le J_0+10} f\|_p^2 + \| P_{\le J_0+10} g \|_p^2).
\end{align*}
Here $p+$ is needed for $p=1$ so that the Riesz transform can be discarded.
On the other hand for $j\le J_0+6$,
\begin{align*}
\| P_je^{t D^{\gamma}} 
( e^{-tD^{\gamma} }R^{\perp} f_{\le J_0+2} \cdot \nabla e^{-t D^{\gamma}} g_{\le J_0+4})  \|_p
\lesssim & e^{c_1 (1+t)  }  (\| P_{\le  J_0+10} f \|_p^2 + \| P_{\le J_0+10} g \|_p^2).
\end{align*}
\end{proof}

\begin{lem} \label{lem_b1b}
For $j\le J_0+6$ and $0<t\le 1$, 
\begin{align*}
\| B_j(f_{\le J_0+2}, g_{>J_0+4}) \|_p \lesssim c_2 \| e^{-t D^{\gamma}} f_{\le J_0+2} \|_p \| g_{[J_0+5,J_0+10]} \|_p.
\end{align*}
For $j>J_0+6$, $t>0$ and $1\le p<\infty$,
\begin{align*}
\| B_j (f_{\le  J_0+2}, g_{>J_0+4} ) \|_p \lesssim c_2 \cdot 2^{j0+} \| P_{\le J_0+2} f \|_p
\| g_{[j-2,j+2]} \|_p+ c_2 (t+t^2) \| P_{\le J_0+2} f \|_p \cdot 2^{j\gamma} \| g_{[j-2,j+2]}\|_p,
\end{align*}
where $c_2>0$ depends on $(p,J_0)$, and the notation $0+$ is defined in the paragraph
preceding \eqref{eqx+x-}.
\end{lem}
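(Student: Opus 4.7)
The plan is to treat $B_{j}(f,g)$ as a bilinear Fourier multiplier and extract cancellation from the commutator structure. With $\zeta$ dual to $f$ and $\eta$ dual to $g$, the bilinear symbol factorizes as $m_{j}(\zeta,\eta)\cdot i\eta\cdot \tau_{R^{\perp}}(\zeta)$ where
\[
m_{j}(\zeta,\eta)= e^{-t|\zeta|^{\gamma}}\Bigl[\phi_{j}(\zeta+\eta)\,e^{t(|\zeta+\eta|^{\gamma}-|\eta|^{\gamma})}-\phi_{j}(\eta)\Bigr].
\]
On the relevant frequency support, $|\zeta|\lesssim 2^{J_{0}}$ while $|\eta|>2^{J_{0}+4}$. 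I split $m_{j}=\mathrm{I}+\mathrm{II}$ with
\[
\mathrm{I}= e^{-t|\zeta|^{\gamma}}\bigl[\phi_{j}(\zeta+\eta)-\phi_{j}(\eta)\bigr]e^{t(|\zeta+\eta|^{\gamma}-|\eta|^{\gamma})},\qquad
\mathrm{II}= e^{-t|\zeta|^{\gamma}}\phi_{j}(\eta)\bigl[e^{t(|\zeta+\eta|^{\gamma}-|\eta|^{\gamma})}-1\bigr].
\]
The key structural fact is the subadditivity $|\zeta+\eta|^{\gamma}\le |\zeta|^{\gamma}+|\eta|^{\gamma}$ for $0<\gamma<1$, which gives $e^{t(|\zeta+\eta|^{\gamma}-|\zeta|^{\gamma}-|\eta|^{\gamma})}\le 1$ uniformly in $t\ge 0$.

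For $j\le J_{0}+6$ everything is bounded in frequency. I bound the two summands of $B_{j}$ separately: the multiplier $P_{j}e^{tD^{\gamma}}$ has $L^{p}\to L^{p}$ operator norm $\lesssim e^{c 2^{j\gamma}t}\lesssim_{J_{0}}1$ for $t\le 1$ (its kernel lies in $L^{1}$ uniformly). Then H\"older with $L^{\infty}\times L^{p}$ on the inner product, combined with Bernstein $\|e^{-tD^{\gamma}}R^{\perp}f_{\le J_{0}+2}\|_{\infty}\lesssim_{J_{0},p}\|e^{-tD^{\gamma}}f_{\le J_{0}+2}\|_{p}$ (the Riesz symbol restricted to low frequencies is a Schwartz multiplier, hence safe at $p=1$) and Bernstein on $\nabla$ at scale $2^{J_{0}+6}$ against the output-localized high-frequency piece of $g$, yields the claim. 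The same argument handles the second summand $e^{-tD^{\gamma}}R^{\perp}f\cdot\nabla P_{j}g$.

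For $j>J_{0}+6$ one has $|\zeta|\le 2^{J_{0}+3}\ll 2^{j}\sim|\eta|$, and the cutoff $\phi_{j}$ forces $g_{>J_{0}+4}$ to contribute only via $g_{[j-2,j+2]}$. For piece $\mathrm{I}$, use $|\phi_{j}(\zeta+\eta)-\phi_{j}(\eta)|\lesssim |\zeta|\,2^{-j}$ and the bound $e^{t(|\zeta+\eta|^{\gamma}-|\zeta|^{\gamma}-|\eta|^{\gamma})}\le 1$; the $i\eta$ factor of size $\sim 2^{j}$ converts this into a multiplier of size $O(|\zeta|)$. Rescaling $\eta\mapsto 2^{j}\hat{\eta}$ puts $|\hat{\eta}|\sim 1$ while $|\zeta|/2^{j}\ll 1$, so Lemma~\ref{lem_phase_1} delivers the derivative bounds required by Lemma~\ref{lem_b0}, applied with $(p_{1},p_{2})=(\infty,p)$; Bernstein converts $\|P_{\le J_{0}+2}f\|_{\infty}$ to $\|P_{\le J_{0}+2}f\|_{p}$, the tiny $2^{j0+}$ loss absorbing a fractional-derivative trade needed to meet the symbol-derivative hypothesis of Lemma~\ref{lem_b0}. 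For piece $\mathrm{II}$, the uniform-in-$t$ estimate is
\[
\bigl|e^{-t|\zeta|^{\gamma}}(e^{tA}-1)\bigr|=\Bigl|\int_{0}^{t}A\,e^{sA-t|\zeta|^{\gamma}}ds\Bigr|\le t|A|,\qquad A:=|\zeta+\eta|^{\gamma}-|\eta|^{\gamma},
\]
since $sA-t|\zeta|^{\gamma}\le 0$ for $0\le s\le t$ by $A\le|\zeta|^{\gamma}$ (subadditivity). Since $|A|\lesssim |\zeta|\,|\eta|^{\gamma-1}$ for $|\zeta|\ll|\eta|$, after the $i\eta$ factor this produces the $t\cdot 2^{j\gamma}$ contribution. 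The $t^{2}$ term arises from the Taylor remainder $e^{tA}-1-tA=A^{2}\int_{0}^{t}(t-s)e^{sA}ds$; pairing with $e^{-t|\zeta|^{\gamma}}$ and invoking the same subadditivity bound gives the multiplier contribution $O(t^{2}|A|^{2}|\eta|)\lesssim t^{2}|\zeta|^{2}\,2^{j(2\gamma-1)}\lesssim t^{2}|\zeta|^{2}\,2^{j\gamma}$ (using $\gamma<1$), and Bernstein on $|\zeta|^{2}\lesssim 2^{2J_{0}}$ yields the claimed form after passing through Lemma~\ref{lem_b0}.

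The main obstacle is controlling piece $\mathrm{II}$ uniformly in $t>0$: $A$ can be positive, so $e^{tA}$ is unbounded as $t\to\infty$. This is defeated only by the tight pairing with the prefactor $e^{-t|\zeta|^{\gamma}}$, whose decay dominates the growth precisely because of the subadditivity of $x\mapsto x^{\gamma}$ for $\gamma<1$; this is the same mechanism that underlies Lemma~\ref{lem_phase_1}. Verifying the symbol-derivative hypotheses of Lemma~\ref{lem_b0} on the rescaled multipliers reduces, via Leibniz applied to $\phi_{j}$ and $e^{-t|\zeta|^{\gamma}}$, to the estimates already packaged in Lemma~\ref{lem_phase_1}.
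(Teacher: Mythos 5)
Your proposal follows essentially the same route as the paper: you reproduce the same symbol decomposition (your $\mathrm{I}$ is the paper's $\sigma_1$ and your $\mathrm{II}$ is $-\sigma_2$), you invoke the subadditivity of $x\mapsto x^{\gamma}$ in exactly the same way to kill the exponential growth uniformly in $t$, and you appeal to Lemma~\ref{lem_phase_1} for the rescaled symbol-derivative bounds and Lemma~\ref{lem_b0} for the $L^p$-boundedness, with the $\infty-/p+$ interpolation trick producing the $2^{j0+}$.

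However, there is one concrete gap in the treatment of the second piece. Your argument stops at the pointwise bound $|e^{-t|\zeta|^{\gamma}}(e^{tA}-1)|\le t|A|\lesssim t\,|\zeta|\,2^{j(\gamma-1)}$ and then passes the resulting bilinear multiplier through Lemma~\ref{lem_b0} with $(p_1,p_2)=(\infty-,p+)$. That step unavoidably pays a Bernstein cost $\|g_{[j-2,j+2]}\|_{p+}\lesssim 2^{j0+}\|g_{[j-2,j+2]}\|_p$, so what you actually prove is a bound of the form $c_2(t+t^2)2^{j(\gamma+0+)}\|P_{\le J_0+2}f\|_p\|g_{[j-2,j+2]}\|_p$ -- weaker than the stated inequality, in which the $(t+t^2)$ term carries $2^{j\gamma}$ with \emph{no} $2^{j0+}$ loss. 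The paper avoids this precisely by performing a further Taylor expansion of $A=|\zeta+\eta|^{\gamma}-|\eta|^{\gamma}$ via $F_1(\tau)=|\eta+\tau\zeta|^{\gamma}$, extracting the leading term $\gamma|\eta|^{\gamma-2}(\eta\cdot\zeta)$. That leading term is a \emph{tensor product} of a $\zeta$-multiplier (of order one) and an $\eta$-multiplier (of order $\gamma$, once the $i\eta$ factor is folded in), so one can apply plain H\"older $L^{\infty}\times L^p$ with no $p+$ trade; only the quadratic remainders (which already have an extra factor $|\zeta|$ or $|\zeta|^2$) are sent through Lemma~\ref{lem_b0} with $\infty-/p+$. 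To reproduce the lemma as stated you must insert this extra expansion. (In the subsequent application in Step 3 of Section 6, the weaker version you prove would in fact still suffice, since the $t\cdot 2^{j0+}$ contribution is no worse than the $2^{j0+}$ term already present; but it does not match the claim.)

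Two smaller imprecisions. First, the low-frequency Riesz symbol $-i\xi^{\perp}/|\xi|\cdot\chi_{|\xi|\lesssim 1}$ is \emph{not} a Schwartz multiplier -- it is homogeneous of degree zero and hence discontinuous at the origin. What is true, and what you actually need, is that its inverse Fourier transform is bounded and lies in $L^{p'}$ for every $p'\in(1,\infty]$ (decaying like $|x|^{-2}$), so Young's inequality gives $\|R^{\perp}\tilde f\|_{\infty}\lesssim_{J_0}\|\tilde f\|_{p}$ for frequency-localized $\tilde f$, including $p=1$. Second, Lemma~\ref{lem_b0} requires $1<p_1<\infty$ for the $\xi$-input; $(p_1,p_2)=(\infty,p)$ is outside its hypotheses, and you must take $p_1=\infty-$ (which you appear to have intended, since you mention the $2^{j0+}$ loss).
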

\begin{proof}
The first inequality for $j\le J_0+6$ is obvious. Consider now $j>J_0+6$. 
Observe that by frequency localization $B_j(f_{\le  J_0+2}, g_{>J_0+4} ) = B_j (
f_{\le  J_0+2}, P_{>J_0+4} g_{[j-2,j+2]} )$. 
We just need to consider $T_{\sigma}(R^{\perp} f_{\le J_0+2}, \nabla P_{>J_0+4} g_{[j-2,j+2]} )$ with
$|\xi|\ll 2^j$, $|\eta| \sim 2^j$, and
\begin{align*}
\sigma(\xi,\eta) &= [\phi(2^{-j} (\xi+\eta)) e^{-t(|\xi|^{\gamma} + |\eta|^{\gamma}-
|\xi+\eta|^{\gamma} )}
- \phi(2^{-j} \eta) e^{-t |\xi|^{\gamma}}]\chi_{|\xi|\ll 2^j} \chi_{|\eta| \sim 2^j} \notag \\
& = (\phi(2^{-j} (\xi+\eta))  - \phi(2^{-j} \eta ) )e^{-t(|\xi|^{\gamma} + |\eta|^{\gamma}-
|\xi+\eta|^{\gamma} )} \chi_{|\xi|\ll 2^j} \chi_{|\eta| \sim 2^j} \notag \\
& \qquad - \phi(2^{-j} \eta)
(e^{-t(|\xi|^{\gamma} +|\eta|^{\gamma}-|\xi+\eta|^{\gamma} )} - e^{-t|\xi|^{\gamma} }) 
\chi_{|\xi|\ll 2^j} \chi_{|\eta| \sim 2^j} \notag \\
&=: \sigma_1(\xi,\eta) + \sigma_2(\xi,\eta).
\end{align*}
By Lemma \ref{lem_b0}, it is easy to check that for some $c_2>0$ depending on $(J_0,p)$, 
\begin{align*}
\|T_{\sigma_1} (R^{\perp}f_{\le J_0+2}, \nabla P_{>J_0+4} g_{[j-2,j+2]})\|_p
\lesssim   \| \partial R^{\perp}f_{\le J_0+2} \|_{\infty-} \| g_{[j-2,j+2]} \|_{p+} \lesssim c_2 \cdot
2^{j0+} \| P_{\le J_0+2} f\|_p \| g_{[j-2,j+2]}\|_p.
\end{align*}
On the other hand for $\sigma_2$, we introduce  for $0\le \tau \le 1$
\begin{align*}
F(\tau ) = e^{-t (|\xi|^{\gamma} + \tau (|\eta|^{\gamma} - |\xi+\eta|^{\gamma} )  ) }, \quad F_1(\tau)
=|\eta +\tau \xi|^{\gamma}.
\end{align*}
Then observe
\begin{align*}
F(1) - F(0) &= F^{\prime}(0 ) + \int_0^1 F^{\prime\prime}(\tau) (1-\tau) d\tau \notag \\
& =e^{-t |\xi|^{\gamma}} t (|\xi+\eta|^{\gamma} - |\eta|^{\gamma}) +
\int_0^1 F(\tau) t^2 (|\eta|^{\gamma} -|\xi+\eta|^{\gamma})^2 (1-\tau) d\tau \notag \\
& =\gamma t e^{-t|\xi|^{\gamma}} |\eta|^{\gamma-2} (\eta\cdot \xi)
+ te^{- t|\xi|^{\gamma} }  \int_0^1 F_1^{\prime\prime}(\tau) (1-\tau) d\tau
+\int_0^1 F(\tau) t^2 (|\eta|^{\gamma} -|\xi+\eta|^{\gamma})^2 (1-\tau) d\tau.
\end{align*}

To handle the last term above we make the following observation. 
Note that  for $|\xi| \ll 2^j$, $|\eta|\sim 2^j$, one has $| |\eta|^{\gamma}-|\xi+\eta|^{\gamma} |=
\mathcal O( |\eta|^{\gamma-1} \cdot |\xi| ) \le \frac {\alpha_0} 2 |\xi|^{\gamma}$, for some constant
$0<\alpha_0<1$.  In particular one may write
\begin{align}
F(\tau) = e^{-t(1-\alpha_0) |\xi|^{\gamma} } e^{-t ( \alpha_0 |\xi|^{\gamma} +\tau( |\eta|^{\gamma}
-|\xi+\eta|^{\gamma} )}.
\end{align}
The symbol corresponding to $e^{-t( \alpha_0 |\xi|^{\gamma} +\tau( |\eta|^{\gamma}
-|\xi+\eta|^{\gamma} )}$ is clearly good for us whilst the term $e^{-t (1-\alpha_0) |\xi|^{\gamma} }$ can
be used to extract additional decay (see below).

It is then clear that
\begin{align*}
\| T_{\sigma_2} (R^{\perp} f_{\le J_0+2},  \nabla P_{>J_0+4} g_{[j-2,j+2]} )
\|_p &\lesssim  t \| e^{-t D^{\gamma} } \partial R^{\perp} f_{\le  J_0+2} \|_{\infty}
\cdot 2^{j\gamma} \| g_{[j-2,j+2]} \|_p
\notag \\
& \quad+ 2^{j(\gamma-1)} 
\cdot t \| e^{-t D^{\gamma}} \partial^2 R^{\perp} f_{\le J_0+2} \|_{\infty-} \| g_{[j-2.j+2]} \|_{p+}  \notag \\
&\quad + 2^{j(2\gamma-1)} \cdot t^2 \| \partial^2 e^{-t(1-\alpha_0) D^{\gamma} }R^{\perp} f_{\le  J_0+2} 
\|_{\infty-}  \|g_{[j-2,j+2]} \|_{p+} \notag \\
& \lesssim c_2 \cdot (t+t^2) \| P_{\le J_0+2} f \|_p \cdot 2^{j\gamma} \| g_{[j-2,j+2]} \|_p.
\end{align*}
\end{proof}

\begin{lem} \label{lem_b1c}
Let $1\le p<\infty$.
For $j\le J_0+6$,  $0<t\le 1$, we have
\begin{align*}
\| B_j(f_{>J_0+4}, g_{\le J_0+2} ) \|_p 
\lesssim c_2  \| e^{-t D^{\gamma}} f_{>J_0+4} \|_p \| g_{\le J_0+2} \|_p.
\end{align*}
For $j>J_0+6$ and any $t>0$, we have
\begin{align*}
\| B_j (f_{> J_0+4}, g_{\le J_0+2} ) \|_p \lesssim c_2 \cdot 2^{j0+} \| P_{\le J_0+2} g \|_p
\| f_{[j-2,j+2]} \|_p,
\end{align*}
In the above $c_2>0$ depends on $(\gamma, p,J_0)$. 
\end{lem}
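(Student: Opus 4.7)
The plan is to mirror the structure of Lemmas \ref{lem_b1a} and \ref{lem_b1b}, exploiting the fact that $f$ is high-frequency while $g$ is low-frequency. Two key structural simplifications will be available: for $j > J_0+6$ the ``subtraction'' piece of $B_j$ vanishes identically, and Fourier support constraints force $f_{>J_0+4}$ to contribute only through the dyadic shell $P_{[j-2,j+2]}f$.

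For the first inequality ($j \le J_0+6$, $0 < t \le 1$), all output frequencies of $B_j$ are bounded by $O(2^{J_0})$, so the growth factor $e^{ct 2^{j\gamma}}$ arising from $P_j e^{tD^\gamma}$ on frequency-localized input is absorbed into $c_2 = c_2(J_0,\gamma,p)$. I would write $B_j = T_1 - T_2$ with $T_2 = e^{-tD^\gamma} R^\perp f_{>J_0+4} \cdot \nabla P_j g_{\le J_0+2}$, and apply H\"older combined with Bernstein on the frequency-localized factor $\nabla P_j g_{\le J_0+2}$ (respectively $\nabla e^{-tD^\gamma} g_{\le J_0+2}$ inside $T_1$) to obtain
\begin{align*}
\|T_1\|_p + \|T_2\|_p \lesssim c_2 \|e^{-tD^\gamma} R^\perp f_{>J_0+4}\|_p \cdot \|g_{\le J_0+2}\|_p.
\end{align*}
For $1 < p < \infty$ the Riesz transform is discarded directly; for $p = 1$ one uses the $L^{1+}/L^{\infty-}$ H\"older pairing in tandem with Bernstein on the (frequency-localized) $g$-factor, exactly as in Lemma \ref{lem_b1a}.

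For the second inequality ($j > J_0+6$), first observe that $P_j g_{\le J_0+2} = 0$ as soon as $2^{j-1} > (7/6)\cdot 2^{J_0+2}$, so $T_2 \equiv 0$. Fourier support in the surviving term forces the factor $f_{>J_0+4}$ to contribute only through $P_{[j-2,j+2]}f$, reducing matters to estimating $P_j e^{tD^\gamma}\bigl(e^{-tD^\gamma} R^\perp P_{[j-2,j+2]} f \cdot \nabla e^{-tD^\gamma} g_{\le J_0+2}\bigr)$. The associated bilinear Fourier symbol is
\begin{align*}
\sigma(\xi, \eta) = \phi(2^{-j}(\xi+\eta))\, e^{-t(|\xi|^\gamma + |\eta|^\gamma - |\xi+\eta|^\gamma)}\, \frac{i\xi^\perp}{|\xi|}\, (i\eta),
\end{align*}
supported on $|\xi| \sim 2^j$, $|\eta| \lesssim 2^{J_0+2}$. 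Using the symmetry of the phase in its two arguments to place $\eta$ in the ``small'' slot of Lemma \ref{lem_phase_1}, part (4) of that lemma gives that $\eta$-derivatives of the heat exponential cost only $|\eta|^{-|\alpha|}$, uniformly in $t$. Decomposing $g_{\le J_0+2} = \sum_{k \le J_0+2} P_k g$ and rescaling $\xi = 2^j \tilde\xi$, $\eta = 2^k \tilde\eta$ for each $k$, the rescaled symbol verifies the hypotheses of Lemma \ref{lem_b0} (second case, both rescaled frequencies $\sim 1$). This yields, for each $k$, a bilinear $L^{p+} \times L^{p+} \to L^p$ bound with the $(i\eta)$ factor contributing a $2^k$. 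A final Bernstein step converts $\|P_{[j-2,j+2]} f\|_{p+}$ back to $\|P_{[j-2,j+2]} f\|_p$ at the price of $2^{j \cdot 0+}$, while $\|P_k g\|_{p+} \lesssim c_2 \|P_k g\|_p$ uniformly in $k \le J_0+2$; the sum over $k$ collapses into $c_2 \|P_{\le J_0+2} g\|_p$.

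The main technical obstacle will be the symbol book-keeping for the rescaled bilinear multiplier in the second case: since $|\xi|$ and $|\eta|$ live on genuinely different dyadic scales, Lemma \ref{lem_b0}'s hypotheses must be verified for the doubly rescaled symbol with constants independent of $t$. The key enabling fact is Lemma \ref{lem_phase_1}(4), which guarantees the $|\eta|^{-|\alpha|}$ homogeneity required by Lemma \ref{lem_b0}. The $p = 1$ endpoint, as in the preceding two lemmas, is dispatched by the standard $L^{p+}/L^{\infty-}$ device.
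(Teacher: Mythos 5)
Your overall architecture matches the paper's: for $j>J_0+6$ the subtraction piece $e^{-tD^\gamma}R^\perp f_{>J_0+4}\cdot\nabla P_j g_{\le J_0+2}$ vanishes since $P_j g_{\le J_0+2}\equiv 0$, Fourier-support constraints reduce $f_{>J_0+4}$ to $P_{[j-2,j+2]}f$, and one finishes with a bilinear multiplier bound backed by Lemma \ref{lem_phase_1}. The paper's route to the multiplier bound is simpler than yours, though: it rescales both variables by $2^j$, so the $f$-frequency lands at $|\tilde\eta|\sim 1$ and the $g$-frequency at $0<|\tilde\xi|\ll 1$, and then cites the \emph{first} case of Lemma \ref{lem_b0} once, with the single H\"older split $\|R^\perp f_{[j-2,j+2]}\|_{p+}\|\nabla P_{\le J_0+2}g\|_{\infty-}$ followed by Bernstein. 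Your dyadic decomposition of $g$ combined with the anisotropic rescaling $\xi=2^j\tilde\xi$, $\eta=2^k\tilde\eta$ (invoking the second case of Lemma \ref{lem_b0} term by term) is a workable alternative, but it is strictly more involved and requires checking that the resulting symbol constants are uniform in $j-k$, which you only sketch.

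There is also a concrete error in your Lebesgue-exponent bookkeeping for the second inequality: the claimed pairing $L^{p+}\times L^{p+}\to L^p$ does not close under H\"older. Lemma \ref{lem_b0} gives $L^{p_1}\times L^{p_2}\to L^r$ with $\frac{1}{r}=\frac{1}{p_1}+\frac{1}{p_2}$, so $L^{p+}\times L^{p+}$ lands in $L^{\frac{p}{2}+}$, not $L^p$. The correct pairing (used in the paper and forced on you if you want the $p=1$ endpoint) is $L^{p+}$ for the $f$-factor and $L^{\infty-}$ for the $g$-factor. The Bernstein step on $g$ should then read $\|\nabla P_k g\|_{\infty-}\lesssim 2^{k(1+\frac{2}{p}-)}\|P_k g\|_p$, and the geometric sum over $k\le J_0+2$ is absorbed into $c_2=c_2(J_0,p,\gamma)$. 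With that correction your argument closes; as written the output space is wrong.
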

\begin{proof}
The estimate for $j\le J_0+6$ is obvious.
Observe that for $j>J_0+6$,
\begin{align*}
B_j(f_{>J_0+4}, g_{\le J_0+2} )
= P_j e^{t D^{\gamma} } ( e^{- t D^{\gamma} } R^{\perp} P_{>J_0+4} f_{[j-2,j+2]}\cdot
\nabla e^{-t D^{\gamma} } P_{\le J_0+2} g).
\end{align*}
Thus by Lemma \ref{lem_b0},
\begin{align*}
\| B_j(f_{>J_0+4}, g_{\le J_0+2} ) \|_p \lesssim \| R^{\perp} f_{[j-2,j+2]} \|_{p+} \| \nabla P_{\le  J_0+2} g \|_{\infty-}
\lesssim c_2 2^{j0+} \| P_{\le J_0+2} g \|_p \| f_{[j-2,j+2]} \|_p.
\end{align*}

\end{proof}

\begin{lem} \label{lem_b1d}
Denote $f^h= f_{>J_0+2}$, $g^h=g_{>J_0+2}$. Then for $j\ge J_0$, $1\le p<\infty$, $0<t\le 1$,  we have
\begin{align*}
\| B_j (f^h, g^h)\|_p 
& \lesssim 2^{j\gamma} \| f^h \|_{\dot B^{1+\frac 2p-\gamma}_{p,\infty}} \| g^h_{[j-2,j+2]}\|_p
+ 2^{j\gamma} \| g^h \|_{\dot B^{1+\frac 2p-\gamma}_{p,\infty}} \| f^h_{[j-2,j+9]} \|_p 
 \notag \\
 & \qquad\qquad+2^j \sum_{k\ge j+8} 2^{k\cdot \frac 2p}
\| f^h_k \|_p \| g^h_{[k-2,k+2]} \|_p.
\end{align*}
\end{lem}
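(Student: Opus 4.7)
The plan is to decompose $B_j(f^h, g^h)$ according to the relative sizes of the two input frequencies, matching each piece to one of the three terms on the right. Writing $B_j(f,g) = T_\sigma(R^{\perp} f, \nabla g)$ with
\begin{align*}
\sigma(\xi,\eta) = \phi(2^{-j}(\xi+\eta)) e^{-t\sigma_0(\xi,\eta)} - \phi(2^{-j}\eta) e^{-t|\xi|^\gamma}, \qquad \sigma_0 = |\xi|^\gamma + |\eta|^\gamma - |\xi+\eta|^\gamma \ge 0,
\end{align*}
I split $B_j(f^h,g^h)$ into the paraproduct pieces (i) $|\xi| \ll |\eta| \sim 2^j$ (giving the first term); (ii) $|\eta| \ll |\xi| \sim 2^j$ (giving the second); (iii) $|\xi|\sim|\eta|\sim 2^k \ge 2^{j+8}$ (giving the third).

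For case (i) with $|\xi|\sim 2^{k_1}$, $k_1 \le j-5$, I write $\sigma = \sigma_1 + \sigma_2$ by inserting $\pm\phi(2^{-j}\eta) e^{-t\sigma_0}$, so that
\begin{align*}
\sigma_1 = [\phi(2^{-j}(\xi+\eta))-\phi(2^{-j}\eta)]\,e^{-t\sigma_0},
\qquad
\sigma_2 = \phi(2^{-j}\eta)\,[e^{-t\sigma_0} - e^{-t|\xi|^\gamma}].
\end{align*}
The difference of $\phi$'s is $O(2^{k_1-j})$; and the inequality $|e^{-ta}-e^{-tb}| \le t|a-b|$ with $a = \sigma_0$, $b = |\xi|^\gamma$ yields $|\sigma_2| \lesssim t\,||\eta|^\gamma - |\xi+\eta|^\gamma|\lesssim t\,2^{k_1}2^{j(\gamma-1)}$. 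Since $|\xi|\ll|\eta|$ forces $\sigma_0\sim|\xi|^\gamma\sim 2^{k_1\gamma}$, the factor $e^{-t\sigma_0}$ supplies an additional gain $e^{-ct\,2^{k_1\gamma}}$ in both pieces. Lemmas \ref{lem_phase_1} and \ref{lem_b0} confirm that $\sigma_1,\sigma_2$ (suitably normalized by $2^{k_1-j}e^{-ct2^{k_1\gamma}}$ and $t\,2^{k_1-j(1-\gamma)}e^{-ct2^{k_1\gamma}}$ respectively) have uniformly bounded multiplier norm. Combining with Bernstein $\|R^\perp P_{k_1}f^h\|_\infty \lesssim 2^{k_1\cdot 2/p}\|P_{k_1}f^h\|_p$ and the derivative $2^j$ from $\nabla g^h_{\sim j}$ produces
\begin{align*}
\|T_{\sigma_\ell}(R^\perp P_{k_1}f^h,\nabla g^h_{[j-2,j+2]})\|_p \lesssim \bigl(1 + t\,2^{j\gamma}\bigr)\,2^{k_1\gamma}\,e^{-ct\,2^{k_1\gamma}} \,\|f^h\|_{\dot B^{1+\frac2p-\gamma}_{p,\infty}}\,\|g^h_{[j-2,j+2]}\|_p,
\end{align*}
for $\ell=1,2$. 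Summing in $k_1$: for $\sigma_1$, $\sum_{k_1\le j}2^{k_1\gamma} \lesssim 2^{j\gamma}$; for $\sigma_2$, write $t\,2^{k_1\gamma}\,e^{-ct\,2^{k_1\gamma}} = y\,e^{-cy}$ with $y$ growing geometrically, which sums to $O(1)$, and multiplying back gives the same $2^{j\gamma}$ factor. Case (ii) is symmetric with the roles of $f^h$ and $g^h$ exchanged; the slightly wider window $[j-2,j+9]$ reflects that $|\xi+\eta|\sim 2^j$ with $|\eta|\ll|\xi|\sim|\xi+\eta|$ forces $|\xi|$ only into a bounded dyadic range around $2^j$.

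For case (iii), note that $P_j g^h_k = 0$ when $k\ge j+8$, so the lower-order piece of $B_j$ vanishes and only the first term contributes. Using the divergence-free identity $R^\perp f \cdot \nabla g = \nabla\cdot(R^\perp f\,\otimes\,g)$ (valid since $\nabla\cdot R^\perp = 0$ and this commutes with $e^{-tD^\gamma}$), I pull the derivative out:
\begin{align*}
B_j(f^h_k,g^h_k) = \nabla\cdot P_j e^{tD^\gamma}\bigl(e^{-tD^\gamma}R^\perp f^h_k \,\otimes\, e^{-tD^\gamma} g^h_k\bigr).
\end{align*}
The outer $\nabla P_j$ produces the factor $2^j$, while the remaining multiplier $\phi(2^{-j}(\xi+\eta))e^{-t\sigma_0}$ is supported on $|\xi|\sim|\eta|\sim 2^k$, $|\xi+\eta|\sim 2^j$ and satisfies the hypotheses of the second case of Lemma~\ref{lem_b0} uniformly in $t$ (since $\sigma_0 \sim 2^{k\gamma}$ there and $e^{-t\sigma_0}\le 1$ with controllable derivatives via Lemma~\ref{lem_phase_1}). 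Estimating one factor by $L^\infty$ via Bernstein $\|R^\perp f^h_k\|_\infty \lesssim 2^{k\cdot 2/p}\|f^h_k\|_p$ and the other by $L^p$ yields $\|B_j(f^h_k,g^h_k)\|_p \lesssim 2^j\,2^{k\cdot 2/p}\|f^h_k\|_p\|g^h_k\|_p$, and summing over $k\ge j+8$ delivers the third term.

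The main obstacle is case (i): showing that the commutator cancellation $\sigma = \sigma_1+\sigma_2$ gains precisely $2^{j\gamma}$ (rather than the naively expected $2^j$) over the trivial bound. This requires the careful bookkeeping of Taylor remainders combined with the exponential decay $e^{-ct\,2^{k_1\gamma}}$ from $\sigma_0\sim 2^{k_1\gamma}$ to render the $t$-dependent piece ($\sigma_2$) summable; for $p=1$ one circumvents the failure of Riesz boundedness by switching to a slightly shifted $p+$ exponent exactly as in Lemma~\ref{lem_b1a}.
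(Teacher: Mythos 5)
Your decomposition (i)--(iii) does not exhaust $B_j(f^h,g^h)$. Two regions are missing. First, the resonant piece $|\xi|\sim|\eta|\sim 2^j$ (the paper's piece $(3)$, $B_j(f^h_{[j-2,j+9]},g^h_{[j-4,j+12]})$): your cases (i), (ii) both require one of the two frequencies to be $\ll 2^j$, and (iii) requires both $\ge 2^{j+8}$, so the near-diagonal at $\sim 2^j$ is uncovered. Your explanation of the $[j-2,j+9]$ window in case (ii) is a symptom of this: with $|\eta|\ll|\xi|$ and $|\xi+\eta|\sim 2^j$ the $f$-frequency is in fact confined to a narrow band $[j-2,j+2]$, and the wider window is actually needed to capture the resonant piece. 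Second, and more structurally, the commutator $B_j$ is not output-localized at $\sim 2^j$: its second term $e^{-tD^\gamma}R^\perp f\cdot\nabla P_j g$ pins the $g$-frequency to $\sim 2^j$ with no constraint on the $f$-frequency, so the pairs with $|\xi|\ge 2^{j+8}$ and $|\eta|\sim 2^j$ contribute $-\sum_{k\ge j+8}e^{-tD^\gamma}R^\perp f^h_k\cdot\nabla P_j g^h$. Your remark that ``$P_j g^h_k=0$ for $k\ge j+8$, so only the first term contributes'' is correct on the diagonal $|\xi|\sim|\eta|$, but it does not make the second term of $B_j$ vanish after summation over all frequency pairs; the cross interaction survives. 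The paper estimates precisely this sum at the start of its analysis of piece $(4)$ and absorbs it into the first RHS term. Both missing pieces are elementary Bernstein/H\"older bounds, but they must be stated for the decomposition to close.

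A second, technical problem is the invocation of the second case of Lemma~\ref{lem_b0} for the high-high multiplier $\phi(2^{-j}(\xi+\eta))e^{-t\sigma_0}$ on $|\xi|\sim|\eta|\sim 2^k\gg 2^j$. The factor $\phi(2^{-j}(\xi+\eta))$ has derivatives living at the fine scale $2^{-j}$, so the normalized quantities $|\xi|^{|\alpha|}|\eta|^{|\beta|}|\partial_\xi^\alpha\partial_\eta^\beta\sigma|$ grow like $2^{(k-j)(|\alpha|+|\beta|)}$, and the hypotheses of Lemma~\ref{lem_b0} are therefore not met with a constant uniform in $k$ --- which is fatal to the summation over $k\ge j+8$. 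The paper circumvents this by using the tensor structure
$\phi(2^{-j}(\xi+\eta))e^{-t\sigma_0}=\bigl[\phi(2^{-j}(\xi+\eta))e^{t|\xi+\eta|^\gamma}\bigr]\cdot e^{-t|\xi|^\gamma}\cdot e^{-t|\eta|^\gamma}$
and Young's inequality on the three one-variable kernels, as in Remark~\ref{Rem5.2}; your final bound $\lesssim 2^j 2^{2k/p}\|f^h_k\|_p\|g^h_k\|_p$ is correct but requires that argument, not Lemma~\ref{lem_b0}. Apart from these two repairable gaps, your treatment of case (i) --- the split $\sigma=\sigma_1+\sigma_2$, the mean-value bound $|e^{-ta}-e^{-tb}|\lesssim t|a-b|$ combined with the decay $e^{-ct2^{k_1\gamma}}$ from $\sigma_0\sim 2^{k_1\gamma}$, and the geometric summation of $t2^{k_1\gamma}e^{-ct2^{k_1\gamma}}$ --- is sound and closely parallels the paper's argument.
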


\begin{proof}
Write
\begin{align*}
B_j(f^h, g^h ) & = B_j(f^h_{<j-2}, g^h) + B_j(f_{[j-2,j+9]}^h, g^h) + B_j(f^h_{>j+9}, g^h) \notag \\
& = B_j(f^h_{<j-2}, g^h_{[j-2,j+2]}) + B_j(f^h_{[j-2,j+9]}, g^h_{<j-4})
+B_j(f^h_{[j-2,j+9]}, g^h_{[j-4,j+12]}) +\sum_{k\ge j+10} B_j(f^h_k, g^h) \notag \\
& =: (1)+ (2) +(3) +(4).
\end{align*}

\underline{Estimate of $(1)$}. 

Note that for given integer $J_1\ge 2$, the term $B_j(f^h_{[j-J_1,j-3]},g^h_{[j-2,j+2]})$ can be included in the estimate of (3).

It suffices for us to estimate
$(1\text{A})=  T_{\sigma} ( R^{\perp} f^h_{<j-J_1}, \nabla g^h_{[j-2,j+2]})$ with (here
to ensure $|\xi| \ll 2^j$ we need to take $J_1$ sufficiently large)
\begin{align*}
\sigma(\xi,\eta) = ( \phi(\frac{\xi+\eta} {2^j}) -\phi(\frac{\eta}{2^j}) )
e^{-t(|\xi|^{\gamma} + |\eta|^{\gamma} - |\xi+\eta|^{\gamma} )} 
\chi_{|\xi|\ll 2^j} \chi_{|\eta| \sim 2^j} 
- \phi(\frac{\eta}{2^j}) ( e^{-t(|\xi|^{\gamma} +|\eta|^{\gamma}-
|\xi+\eta|^{\gamma})} -e^{-t|\xi|^{\gamma} })   \chi_{|\xi| \ll 2^j} \chi_{|\eta| \sim 2^j}.
\end{align*}
By an argument similar to that in Lemma \ref{lem_b1b}, we get 
\begin{align*}
\| (1\text{A}) \|_p &\lesssim \| \partial R^{\perp} f^h_{<j-2} \|_{\infty-} \| g^h_{[j-2,j+2]} \|_{p+}
+t \| e^{-t D^{\gamma} } \partial R^{\perp} f_{<j-2}^h \|_{\infty}
\cdot 2^{j\gamma} \| g_j^h\|_p+ 2^{j(\gamma-1)} 
\cdot t \| e^{-t D^{\gamma}} \partial^2 R^{\perp} f_{<j-2}^h \|_{\infty-} \| g_j^h \|_{p+}  \notag \\
&\quad + 2^{j(2\gamma-1)}  \|  D^{2-2\gamma} R^{\perp} f_{<j-2}^h 
\|_{\infty-}  \|g_j^h \|_{p+} \notag \\
& \lesssim 2^{j\gamma} \| f^h\|_{\dot B^{1+\frac 2p-\gamma}_{p,\infty}} \| g^h_{[j-2,j+2]} \|_p.
\end{align*}

\underline{Estimate of $(2)$}.  Clearly 
\begin{align*}
(2)= P_j e^{t D^{\gamma}} ( R^{\perp} e^{-t D^{\gamma} } f^h_{[j-2,j+9]}\cdot \nabla e^{-t D^{\gamma}}
g^h_{<j-4} ).
\end{align*}
Thus
\begin{align*}
\| (2) \|_p \lesssim \| f^h_{[j-2,j+9]} \|_{p+} \| \nabla g^h_{<j-4} \|_{\infty-}  
\lesssim \| g^h \|_{\dot B^{1-\gamma+ \frac 2p}_{p,\infty} }  2^{j\gamma}\| f^h_{[j-2,j+9]} \|_p.
\end{align*}

\underline{Estimate of $(3)$}.  Clearly 
\begin{align*}
\| (3) \|_p \lesssim  2^{j\gamma} \| g^h \|_{\dot B^{1-\gamma+\frac 2p}_{p,\infty} }
\| f^h_{[j-2,j+9]}\|_p.
\end{align*}

\underline{Estimate of $(4)$}. 
We first note that 
\begin{align}
 \sum_{k\ge j+10}
 \| e^{-tD^{\gamma}} R^{\perp} f_k^h \cdot \nabla P_j g \|_p
 \lesssim 
 2^{j\gamma} \| f^h \|_{\dot B^{1+\frac 2 p-\gamma}_{p,\infty}}
 \| g^h_{[j-2,j+2]} \|_p.
 \end{align}
On the other hand by using that $R^{\perp} f $ is divergence-free, we have
\begin{align*}
&\sum_{k\ge j+10}  \left\| P_j e^{t D^{\gamma} } \nabla
\cdot \biggl( (e^{-t D^{\gamma} } R^{\perp} f^h_k   ) (e^{-t D^{\gamma} } g^h_{[k-2,k+2]} ) \biggr) \right\|_p \notag \\
 \lesssim & \sum_{k\ge j+10} 2^j \| f^h_k \|_{2p} \| g^h_{[k-2,k+2]} \|_{2p} \notag \\
 \lesssim &\; 2^j \sum_{k\ge j+8} 2^{k\cdot \frac 2p} 
\| f^h_k \|_p \cdot  \| g^h_{[k-2,k+2]} \|_p.
\end{align*}
\end{proof}

\section{Proof of Theorem \ref{thm2}}
Recall that the initial data $\theta_0 \in B_{p,q}^{1-\gamma+\frac 2p}$, $1\le p<\infty$, $0<\gamma<1$ and
$1\le q<\infty$.

\begin{lem} \label{lem5.1}
Let $\chi \in C_c^{\infty}(\mathbb R^2)$ and $\theta_0 \in B_{p,q}^s(\mathbb R^2)$ with $1\le p<\infty$,
$1\le q<\infty$, $s>0$. Let $(\lambda_n)_{n=1}^{\infty}$ be a  sequence of positive numbers such
that $ \inf_n \lambda_n >0$. 
Then
\begin{align*}
\lim_{J_0\to \infty} \sup_{n\ge 1} \| P_{>J_0} \bigl( \,\chi(\lambda_n^{-1} x) P_{\le n+2} \theta_0 \,\bigr) \|_{ B^s_{p,q}}
=0.
\end{align*}
\end{lem}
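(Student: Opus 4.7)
The plan: Set $c_0 := \inf_n\lambda_n > 0$. For large $J_0$, put $M=\lfloor J_0/2\rfloor$ and split
\begin{align*}
P_{>J_0}\bigl(\chi_n P_{\le n+2}\theta_0\bigr)
=P_{>J_0}\bigl(\chi_n P_{(M,n+2]}\theta_0\bigr)+P_{>J_0}\bigl(\chi_n P_{\le M}\theta_0\bigr).
\end{align*}
I will bound each term in $B^s_{p,q}$ by a quantity tending to $0$ as $J_0\to\infty$, uniformly in $n$. The workhorse is the standard Besov multiplier estimate
\begin{align*}
\|\phi g\|_{B^s_{p,q}}\lesssim \|\phi\|_{C^k}\|g\|_{B^s_{p,q}}\qquad (k>s),
\end{align*}
which follows from the Bony paraproduct (the condition $s>0$ is used to sum the remainder), together with the uniform bound $\|\partial^\alpha\chi_n\|_\infty\le c_0^{-|\alpha|}\|\partial^\alpha\chi\|_\infty$ coming from $\chi_n(x)=\chi(\lambda_n^{-1}x)$ and $\lambda_n\ge c_0$.

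For the first summand, the multiplier estimate at once gives
\begin{align*}
\|\chi_n P_{(M,n+2]}\theta_0\|_{B^s_{p,q}}\lesssim \Bigl(\sum_{k>M}2^{ksq}\|P_k\theta_0\|_p^q\Bigr)^{1/q},
\end{align*}
which tends to $0$ as $M\to\infty$ uniformly in $n$, since $\theta_0\in B^s_{p,q}$.

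For the second summand, I further decompose $\chi_n=P_{\le J_0-10}\chi_n+P_{>J_0-10}\chi_n$. The product $(P_{\le J_0-10}\chi_n)\cdot P_{\le M}\theta_0$ has Fourier support in $\{|\xi|\le \tfrac76(2^{J_0-10}+2^M)\}\subset\{|\xi|\le 2^{J_0}\}$ for $J_0$ sufficiently large, so $P_{>J_0}$ annihilates it. The remaining contribution is bounded by the multiplier estimate as
\begin{align*}
\|(P_{>J_0-10}\chi_n)\,P_{\le M}\theta_0\|_{B^s_{p,q}}\lesssim \|P_{>J_0-10}\chi_n\|_{C^k}\,\|\theta_0\|_{B^s_{p,q}},
\end{align*}
so it suffices to show $\|P_{>J_0-10}\chi_n\|_{C^k}\to 0$ uniformly in $n$. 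But $\widehat{\chi_n}(\xi)=\lambda_n^2\hat\chi(\lambda_n\xi)$, so the substitution $\eta=\lambda_n\xi$ together with the Schwartz decay of $\hat\chi$ yields, for any $N$,
\begin{align*}
\|\partial^\alpha P_{>J_0-10}\chi_n\|_\infty
\lesssim \lambda_n^{-|\alpha|}\int_{|\eta|\ge c_0 2^{J_0-10}}|\eta|^{|\alpha|}|\hat\chi(\eta)|\,d\eta
\lesssim_{\alpha,N} c_0^{-|\alpha|}(c_0\,2^{J_0})^{-N},
\end{align*}
which decays rapidly in $J_0$ uniformly in $n$.

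The only delicate point is the uniform-in-$n$ character of the Besov multiplier estimate; the hypothesis $\inf_n\lambda_n>0$ enters precisely at the two places where one needs uniform control of the pointwise derivative norms of $\chi_n$ and of its high-frequency tail. The argument does not use $\sup_n\lambda_n<\infty$, which is consistent with the statement of the lemma.
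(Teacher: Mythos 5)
Your proof is correct, and it takes a genuinely different route from the paper's. The paper carries out the Bony decomposition of the product $\chi(\lambda_n^{-1}\cdot)\,P_{\le n+2}\theta_0$ directly and bounds the three paraproduct pieces restricted to output frequencies $j\ge J_0$: the low-high and diagonal pieces are small because $\|P_j\chi_n\|_\infty\lesssim 2^{-jk}\|\partial^k\chi_n\|_\infty=2^{-jk}\lambda_n^{-k}\|\partial^k\chi\|_\infty\lesssim 2^{-jk}c_0^{-k}$ decays (uniformly in $n$) for $k>s$, while the high-low piece is controlled by the Besov tail $(2^{js}\|P_j\theta_0\|_p)_{l^q(j\ge J_0)}\to 0$. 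You instead split $\theta_0$ at the intermediate scale $M=\lfloor J_0/2\rfloor$, invoke the pointwise-multiplier bound $\|\phi g\|_{B^s_{p,q}}\lesssim\|\phi\|_{C^k}\|g\|_{B^s_{p,q}}$ (valid for $s>0$, $k>s$) to control the high-frequency-data contribution by the same Besov tail, and then for the low-frequency-data contribution split $\chi_n$ at scale $J_0-10$ so that $P_{>J_0}$ annihilates the low-low product by Fourier support, leaving only the rapidly decaying tail $\|P_{>J_0-10}\chi_n\|_{C^k}\lesssim_{N}c_0^{-k}(c_0 2^{J_0})^{-N}$. Since the multiplier theorem is itself proved by paraproduct, the two proofs ultimately rest on the same two ingredients (the tail of $\theta_0$ in $B^s_{p,q}$ and the decay of high-frequency data of $\chi_n$, both uniform via $\lambda_n\ge c_0$), but yours is more modular, whereas the paper's is more bare-handed; both correctly isolate where $\inf_n\lambda_n>0$ is used.
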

\begin{proof}
Write $f= \chi(\lambda_n^{-1} x)$, $g= P_{\le n+2} \theta_0$, then 
\begin{align*}
fg = \sum_{j \in \mathbb Z} ( f_{j} g_{<j-2} + g_j f_{<j-2} + f_j \tilde g_j),
\end{align*}
where $\tilde g_j = g_{[j-2,j+2]}$. Clearly
\begin{align*}
(2^{js}\| f_j g_{<j-2} \|_p )_{l_j^q( j \ge J_0)} \lesssim 
 \|g \|_p  \| \partial^{2[s]+2} f\|_{\infty} (2^{-j(2[s]+2-s)})_{l_j^q(j\ge J_0)} \to 0,
\end{align*}
uniformly in $n$ as $J_0 \to \infty$.
A similar estimate also shows that the diagonal piece $f_j \tilde g_j$ is OK. On the other hand
\begin{align*}
( 2^{js} \| f_{<j-2} g_j \|_p ) _{l_j^q(j\ge J_0)}
\lesssim \| f\|_{\infty} (2^{js} \| P_j \theta_0 \|_p)_{l_j^q(j\ge J_0)} \to 0,
\end{align*}
uniformly in $n$ as $J_0\to \infty$.
\end{proof}

We now complete the proof of Theorem \ref{thm2}. This will be carried out in several steps below.

Step 1. Definition of approximating solutions. Define $\theta^{(0)} \equiv 0$. For $n \ge 0$, define the iterates
$\theta^{(n+1)}$ as solutions to the following system
\begin{align*}
\begin{cases}
\partial_t \theta^{(n+1)} = -R^{\perp}\theta^{(n)} \cdot \nabla \theta^{(n+1)} - D^{\gamma} \theta^{(n+1)}, 
\quad (t,x) \in (0,\infty) \times \mathbb R^2;\\
\theta^{(n+1)}\Bigr|_{t=0} = \chi(\lambda_n^{-1} x) P_{\le n+2} \theta_0,
\end{cases}
\end{align*}
where $\chi \in C_c^{\infty}(\mathbb R^2)$ satisfies  $0\le \chi \le 1$ for all $x$, 
$\chi(x)\equiv 1$ for $|x|\le 1$, and $\chi (x)=0$ for $|x|\ge 2$. 
Here we introduce the spatial cut-off $\chi$ so that $\theta^{(n+1)}\Bigr|_{t=0} \in H^k$ for all $k\ge 0$ when we only assume
$\theta_0$ lies in $L^p$ type spaces.  The scaling parameters $\lambda_n \ge 1$ are inductively chosen such that
$\lambda_n >\max\{4\lambda_{n-1},2^n\} $ and
\begin{align*}
\| \theta_0 \|_{L^p(|x|>\frac 1 {100} \lambda_n)} <2^{-100n}.
\end{align*}
Easy to check that
\begin{align*}
\| \theta^{(n+1)}(0) - \theta^{(n)} (0) \|_p \lesssim 2^{-n(1-\gamma+\frac 2p)},
\end{align*}
and by interpolation for $0<\tilde s<1-\gamma+\frac 2p$, $\tilde s=0+$,
\begin{align} \label{initial_contra}
\| \theta^{(n+1)}(0) - \theta^{(n)}(0) \|_{B^{\tilde s}_{p,\infty}} \lesssim 2^{-n(1-\gamma+\frac 2p)+}.
\end{align}
Also by Lemma \ref{lem5.1}, we have
\begin{align*}
\| \theta^{(n+1)}(0) - \theta_0 \|_{B^{1-\gamma+\frac 2p}_{p,q} } \to 0, \quad \text{as $n\to\infty$}.
\end{align*}
These estimates will be needed for the contraction estimate later.

Clearly we have the  uniform boundedness of $L^p$ norm:
\begin{align*}
\sup_{n\ge 0} \sup_{0\le t <\infty} \| \theta^{(n+1)} (t) \|_p \le \sup_{n\ge 0} \| P_{\le n+2} \theta_0 \|_p
\lesssim \| \theta_0\|_p.
\end{align*}
This will often be used without explicit mentioning below.

Step 2. 
Denote $A=\frac 12 D^{\gamma}$, $f^{(n+1)}(t)=e^{t A } \theta^{(n+1)} (t)$. Then
\begin{align*}
\partial_t f^{(n+1)} = - A  f^{(n+1)} - e^{t A} (  R^{\perp}e^{-tA} f^{(n)} \cdot \nabla e^{-tA} f^{(n+1)} ).
\end{align*}
One can view $f^{(n+1)}$ as the unique limit of the sequence of solutions $(f^{(n+1)}_m)_{m=1}^{\infty}$ solving
the regularized system
\begin{align*}
\begin{cases}
\partial_t f^{(n+1)}_m = - A f^{(n+1)}_m - e^{tA} {P_{\le m}} ( 
 R^{\perp} e^{-t A} f^{(n)} \cdot \nabla e^{-t A } P_{\le m} f^{(n+1)}_m), \\
f^{(n+1)}_m \Bigr|_{t=0} = P_{\le m} \Bigl( \chi(\lambda_n^{-1} x) P_{\le n+2} \theta_0 \Bigr).
\end{cases}
\end{align*}
By using the estimates in Section 2 (and the inductive assumption that
$f^{(n)} \in C_t^0 H^k$ for all $k\ge 0$), we can then obtain $f^{(n+1)} \in C_t^0 ([0,T], H^k)$ for all $T>0$, $k\ge 0$. 
Write
\begin{align*}
f^{(n+1)}(t) = e^{-t A} f^{(n+1)}(0) - \int_0^t e^{-(t-s) A} e^{sA} (  R^{\perp}e^{-sA}
f^{(n)} \cdot \nabla e^{-sA} f^{(n+1)} ) ds.
\end{align*}
By using the fact that $f^{(n)}$, $f^{(n+1)} \in C_t^0 H^k$, it is not difficult to check that 
\begin{align*}
\sup_{0\le t \le T} \| \partial^k f^{(n+1)}(t) \|_p <\infty, \quad \forall\, T>0,\, k\ge 0.
\end{align*}
It follows that for any $T>0$
\begin{align*}
\sup_{0\le t\le T} \| \partial_t f^{(n+1)}\|_p\le 
\sup_{0\le t \le T} \| D^{\gamma} f^{(n+1)} \|_p + \| e^{tA} ( R^{\perp} e^{-tA} f^{(n)} \cdot
\nabla e^{-t A}  f^{(n+1)} ) \|_p <\infty.
\end{align*}
This together with interpolation implies $f^{(n+1)} \in C_t^0([0,T], W^{k,p})$ for any $T>0$, $k\ge 0$.
These estimates establish the (a priori) finiteness of the various Besov norms  and associated time continuity
needed in the following steps.

Step 3. Besov norm estimates.  Denote 
$f_j^{(n+1)} = P_j f^{(n+1)}$.  For any $\epsilon_0>0$, we show that there exists
$J_1$ sufficiently large, and $T_1>0$ sufficiently small, such that
\begin{align} \label{step3_J1T1}
\sup_{n\ge 0}   ( 2^{j(1-\gamma+\frac 2p)}\|f_j^{(n)} \|_p )_{l_j^q L_t^{\infty} (t\in [0,T_1],\, j\ge J_1) } <\epsilon_0.
\end{align}

 Clearly for each $j\in \mathbb Z$,
\begin{align*}
\partial_t f^{(n+1)}_j = -\frac 12 D^{\gamma} f^{(n+1)}_j- P_j e^{tA} ( 
R^{\perp} e^{-t A } f^{(n)} \cdot \nabla e^{-t A} f^{(n+1)}).
\end{align*}
Then by using Lemma \ref{lem_heat}, we get for some constants $\tilde C_1>0$, $\tilde C_2>0$,
\begin{align*}
\partial_t ( \| f_j^{(n+1)} \|_p)
+ \tilde C_1 2^{j\gamma} \| f_j^{(n+1)} \|_p
\le \tilde C_2 \| [P_j e^{tA}, e^{-t A} R^{\perp} f^{(n)} ] \cdot \nabla e^{-t A} f^{(n+1)}\|_p.
\end{align*}
Take an integer $J_0\ge 10$ which will be made sufficiently large later. 
By using the nonlinear estimates derived before (see Lemma \ref{lem_b1a}--\ref{lem_b1d}),
we then obtain
\begin{align*}
\|f_j^{(n+1)}(t) \|_p\le e^{-\tilde C_1 2^{j\gamma} t} \|f_j^{(n+1)}(0) \|_p
+ \int_0^t e^{-\tilde C_1 2^{j\gamma} (t-s) } N_j ds,
\end{align*}
where for some constants $\tilde C_3>0$, $\tilde C_4>0$, $\tilde C_5>0$, 
\begin{align*}
N_j&= 1_{j\le J_0+6} \cdot {\tilde C_3 } \cdot
( \| P_{\le J_0+10} f^{(n)} \|_p^2 + \| P_{\le  J_0+10} f^{(n+1) } \|_p^2
+\| \theta_0\|_p^2 ) 
+ 1_{j>J_0+6} \cdot \tilde C_4 \cdot 2^{j0+} (\| P_{\le J_0+2} f^{(n)} \|_p \| f^{(n+1)}_{[j-2,j+2]} \|_p ) \notag \\
& \quad 
+ 1_{j>J_0+6} \cdot \tilde C_4 \cdot (2^{j0+}\|P_{\le J_0+2} f^{(n+1) } \|_p \| f^{(n)}_{[j-2,j+2]} \|_p
+ s\| P_{\le J_0+2} f^{(n)} \|_p \cdot 2^{j\gamma} \| f_{[j-2,j+2]}^{(n+1)} \|_p ) \notag \\
& \quad + \tilde C_5 2^{j\gamma} (\| P_{>J_0+2} f^{(n)} \|_{\dot B^{1+\frac 2p-\gamma}_{p,\infty} }
\| P_{>J_0+2} f^{(n+1)}_{[j-2,j+2]} \|_p 
+ \| P_{>J_0+2} f^{(n+1)} \|_{\dot B^{1+\frac 2p-\gamma}_{p,\infty} }
\| P_{>J_0+2} f^{(n)}_{[j-2,j+9]} \|_p ) \notag \\
& \quad + \tilde C_5 2^j \sum_{k\ge j+8} 2^{k\frac 2p} \| P_{>J_0+2} f^{(n)}_k \|_p \| 
P_{>J_0+2} f^{(n+1) }_{[k-2,k+2]}\|_p.
\end{align*}

Denote 
\begin{align*}
\| f^{(n+1)} \|_{T,J_0} = (2^{j(1-\gamma+\frac 2p)}  \|f_j^{(n+1) } \|_p)_{l_j^q L_t^{\infty} (t \in [0,T],\, j\ge J_0)}.
\end{align*}
One should note that by the estimates derived in Step 2, the above norm of $f^{(n+1)}$ is finite. 
Then for $0<T\le 1$,
\begin{align*}
\|f^{(n+1)}\|_{T,J_0}
&\le ( 2^{j(1-\gamma+\frac 2p)}\|f_j^{(n+1)}(0) \|_p)_{l_j^q(j\ge J_0)} + C_{J_0}^{(1)} T \| \theta_0\|_p^2
\notag \\
& \qquad +C_1 \cdot 2^{-\frac 12 J_0 \gamma} \cdot e^{C_3 \cdot 2^{J_0\gamma} T} \|\theta_0\|_p
\cdot ( \|f^{(n)} \|_{T,J_0} + \| f^{(n+1)} \|_{T,J_0} ) \notag \\
&\quad +  C_{J_0}^{(2)} \cdot T \|\theta_0\|_p \| f^{(n+1)}\|_{T,J_0}
+ C_2 \| f^{(n) } \|_{T, J_0} \| f^{(n+1) } \|_{T,J_0},
\end{align*}
where $C_{J_0}^{(1)}$, $C_{J_0}^{(2)}>0$ are constants depending on $(J_0,\gamma, p,q)$, 
$C_1$, $C_2>0$ are constants depending only on $(\gamma,p,q)$, and $C_3>0$
depends only on $\gamma$. 

By Lemma \ref{lem5.1}, one can find $J_0$ sufficiently large such that
\begin{align*}
&\sup_{n\ge 0} (2^{j(1-\gamma+\frac 2p)} \|f_j^{(n+1)}(0) \|_p)_{l^q_j(j\ge J_0)} < \frac 1 {100 C_2}, \\
& C_1 \cdot 2^{-\frac 12 J_0 \gamma} \cdot 10 (1+ \|\theta_0\|_p )<\frac 1{20}.
\end{align*}
Fix such $J_0$ and then choose $T=T_0\le 1$ such that 
\begin{align*}
&C_{J_0}^{(1)} T_0 \| \theta_0 \|_p^2 <\frac 1 {100 C_2}, \quad  C_3 \cdot 2^{J_0\gamma}\cdot T_0<\frac 1{100}, \quad
C_{J_0}^{(2)} T_0 \| \theta_0\|_p <\frac 1 {20}.
\end{align*}
The inductive assumption is $\|f^{(n)}\|_{T_0,J_0} <\frac 1{4C_2}$. Then clearly 
\begin{align*}
\| f^{(n+1)} \|_{T_0, J_0} \le \frac 1 {100 C_2} + \frac 1 {100 C_2} +\frac 1{20} \| f^{(n+1) }\|_{T_0,J_0}
+ \frac 1 {20} \cdot \frac 1{4C_2} + \frac 1 {20} \| f^{(n+1)} \|_{T_0,J_0} + \frac 1 4 \| f^{(n+1)} \|_{T_0,J_0}.
\end{align*}
This easily implies $\|f^{(n+1)} \|_{T_0,J_0} < \frac 1 {4C_2}$ which completes the argument.

The statement \eqref{step3_J1T1} clearly follows by a slight modification of the above argument.

Step 4. Contraction in $B^{s_0}_{p,\infty}$ where $s_0>0$ is a sufficiently small number. 
\begin{rem*}
We chose the space $C_t^0 B^{0+}_{p,\infty}$ since it contains $L^p$ and its norm coincides with the usual Chemin-Lerner
space $\tilde L_t^{\infty} B^{0+}_{p,\infty}$ (see \eqref{norm_equiv1}).
This way  one can make full use of the smoothing effect
of the linear semigroup on each dyadic frequency block which is needed for this critical problem.
\end{rem*}

Set $\eta^{(n+1)}=
f^{(n+1)}-f^{(n)}$. Then
\begin{align*}
\partial_t \eta^{(n+1)} = - A \eta^{(n+1)}
- e^{tA} (R^{\perp} e^{-tA} \eta^{(n)} \cdot \nabla e^{-tA} f^{(n+1)} ) - e^{tA}
(R^{\perp} e^{-t A} f^{(n-1)} \cdot \nabla e^{-tA} \eta^{(n+1) } ).
\end{align*}
It is easy to check for $0 \le t \le T_0$, $J_1 \in \mathbb Z$ (below we work with $p+$ to avoid the end-point situation
$p=1$)
\begin{align*}
\partial_t \| P_{\le 2J_1} \eta^{(n+1) } \|_p &\lesssim_{J_1, T_0,p,\gamma} \| e^{-tA} R^{\perp}\eta^{(n)} \|_{p+} \| f^{(n+1) } \|_{\infty-} + 
\| R^{\perp} f^{(n-1) } \|_{\infty-}  \| e^{-tA} \eta^{(n+1)} \|_{p+} \notag \\
& \lesssim_{J_1, T_0,p,\gamma} \| \eta^{(n)} \|_p \| f^{(n+1)} \|_{\infty-} + 
\|f^{(n-1)} \|_{\infty-} \| \eta^{(n+1)} \|_p
\notag \\
&\le C_{T_0, J_1} \cdot (\| \eta^{(n) } \|_p + \| 
\eta^{(n+1)} \|_p),
\end{align*}
where $C_{T_0,J_1}$ is a constant depending only on $(\theta_0, J_1, T_0, \gamma, p, q)$. Here in the last inequality 
we used the estimates
obtained in Step 3.

On the other hand for $j\ge J_1$, denoting $\eta^{(n+1)}_j =P_j \eta^{(n+1)}$, we have
\begin{align*}
\partial_t \| \eta^{(n+1)}_j \|_p + \tilde C_1 2^{j\gamma} \| \eta^{(n+1)}_j \|_p
\lesssim \| P_j e^{tA} (R^{\perp} e^{-tA} \eta^{(n)} \cdot \nabla e^{-tA} f^{(n+1)} ) \|_p + 
\|[P_j e^{tA},
R^{\perp} e^{-t A} f^{(n-1)}] \cdot \nabla e^{-tA} \eta^{(n+1) } \|_p.
\end{align*}
We now need a simple lemma. 
\begin{lem} \label{lem_eta}
Let $0<t\le 1$, $1\le p<\infty$, $J_1 \ge 10$. We have for any $j\ge J_1$,
\begin{align*}
&\| P_j e^{tA} ( R^{\perp} e^{-tA} \eta \cdot \nabla e^{-t A} f ) \|_p \lesssim 
2^{j(1+\frac 2p-s_0)} \| f_{[j-2,j+2]}\|_p \| \eta\|_{B^{s_0}_{p,\infty}} + 2^{j\gamma} \|\eta_{[j-2,j+3]}\|_p \cdot  \;(2^{j_1(1-\gamma+\frac 2p)}
\|f_{j_1}\|_p)_{l_{j_1}^{\infty} (j_1\ge 2J_1)}  \notag \\
&\qquad \qquad\qquad\qquad +2^{3J_1(1+\frac 2p)} \| P_{\le 3J_1} f\|_p \cdot 2^{j0+} \| \eta_{[j-2,j+3]} \|_p
+2^j \| \eta\|_{B^{s_0}_{p,\infty}} \sum_{k\ge j+4} 2^{k(\frac 2p-s_0)}\| f_{[k-2,k+2]}\|_p;
\notag \\
&\| P_j e^{tA} ( R^{\perp} e^{-tA} f \cdot \nabla e^{-tA} \eta) - R^{\perp} e^{-tA} f \cdot \nabla P_j \eta\|_p \notag \\
&\quad\lesssim 
2^{j\gamma} \| f_{>J_1} \|_{\dot B^{1+\frac 2p -\gamma}_{p,\infty} } \| \eta_{[j-2,j+3]} \|_p
+ C_{J_1} \cdot \|f_{\le J_1+10} \|_p \cdot 2^{j0+} \| \eta_{[j-2,j+3]} \|_p \notag \\
& \qquad \qquad + C_{J_1} \cdot t\| f_{\le J_1+10} \|_p \cdot 2^{j\gamma} \| \eta_{[j-2,j+3]} \|_p\notag \\
& \qquad +
\|\eta\|_{B^{s_0}_{p,\infty}} \cdot 2^{j(1+\frac 2p-s_0)} \|f_{[j-5,j+5]} \|_p+
2^{j\gamma}\|P_{\ge j+6} f\|_{B^{1+\frac 2p-\gamma}_{p,\infty}} \| \eta_j \|_p,
\end{align*}
where $C_{J_1}$ is a constant depending on $J_1$. 
\end{lem}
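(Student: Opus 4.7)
The plan is to treat both inequalities by the same bilinear multiplier machinery developed earlier in this section: Lemma \ref{lem_b0} gives $L^{p_1}\times L^{p_2}\to L^r$ boundedness whenever the relevant symbol is smooth on an annulus, Lemma \ref{lem_phase_1} together with Remark \ref{Rem5.2} controls the heat symbol $e^{-t(|\xi|^\gamma+|\eta|^\gamma-|\xi+\eta|^\gamma)}$ and its derivatives, and a paraproduct-style Littlewood--Paley decomposition reduces matters to a short list of type cases. The dichotomy $P_{\le 2J_1}$ versus $P_{>2J_1}$ on $f$ in the statement is the signature of the cut-off $J_1$ from Step 3 of the proof of Theorem \ref{thm2}: low frequencies of $f$ must be absorbed into the constant $C_{J_1}$ (or into a Bernstein loss $2^{3J_1(1+\frac 2p)}$), while high frequencies feed back into small Besov-type quantities.

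For the first inequality, which has no commutator cancellation, I would split $R^{\perp} e^{-tA}\eta\cdot\nabla e^{-tA} f$ by Bony's paraproduct and keep only the pieces whose output frequency lies near $2^j$: (i) $\eta$ low and $f$ at scale $2^j$, (ii) $\eta$ at scale $2^j$ and $f$ low, (iii) comparable around $2^j$, and (iv) high-high with output $\ll$ inputs. Piece (i) uses the elementary Bernstein-plus-Besov bound
\[
\|\eta_{<j}\|_{\infty} \lesssim \sum_{k<j} 2^{k\cdot\frac 2p}\|\eta_k\|_p \lesssim 2^{j(\frac 2p-s_0)}\|\eta\|_{B^{s_0}_{p,\infty}},
\]
valid because $s_0$ will be chosen smaller than $\frac 2p$, and produces the first displayed term. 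Piece (ii) is split further into $f\in P_{\le 2J_1}$ (Bernstein yields the $2^{3J_1(1+\frac 2p)}\cdot 2^{j0+}$ summand) and $f\in P_{>2J_1}$ (telescoping $\|\nabla f_{\ll j}\|_\infty$ against the $\dot B^{1-\gamma+\frac 2p}_{p,\infty}$-norm, producing the $2^{j\gamma}$ prefactor). Pieces (iii) and (iv) mirror the corresponding parts of Lemma \ref{lem_b1d}; in particular (iv) becomes summable via the divergence-free identity $R^{\perp}f\cdot\nabla g=\nabla\cdot(R^{\perp}f\,g)$.

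For the second inequality the contribution of low-frequency $f$ must exploit commutator cancellation rather than be estimated termwise. Writing $f=f_{\le 2J_1}+f_{>2J_1}$, the low piece interacts only with $\eta_{[j-2,j+3]}$ and $\eta_j$ and is analysed exactly as in the $\sigma_2$ decomposition of Lemma \ref{lem_b1b}: a Taylor expansion of both $\phi(2^{-j}(\xi+\eta))-\phi(2^{-j}\eta)$ and $e^{-t(|\xi|^\gamma+|\eta|^\gamma-|\xi+\eta|^\gamma)}-e^{-t|\xi|^\gamma}$ in the small variable $\xi$ furnishes exactly the $C_{J_1}\cdot 2^{j0+}$ and $C_{J_1}\cdot t\cdot 2^{j\gamma}$ factors. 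For the high piece $f_{>2J_1}$ the commutator no longer cancels, so each of the two terms is estimated separately: the case of $f$ low relative to $\eta$ at scale $2^j$ gives the $\|f_{>2J_1}\|_{\dot B^{1+\frac 2p-\gamma}_{p,\infty}}$ term by the argument (1) of Lemma \ref{lem_b1d}; the comparable case $f_{[j-2,j+3]}$ paired with $\eta_{<j-2}$ gives the $B^{s_0}_{p,\infty}$-term through $\|\nabla\eta_{<j-2}\|_\infty\lesssim 2^{j(1+\frac 2p-s_0)}\|\eta\|_{B^{s_0}_{p,\infty}}$; and the remaining high-high piece with output $\ll$ inputs is handled, as before, by the divergence-free structure of $R^{\perp}$.

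The main obstacle will be the book-keeping in the commutator estimate: one has to track precisely which paraproduct block uses the symbol cancellation (and hence saves the factor $t\cdot 2^{j\gamma}$) and which blocks can only be bounded termwise, then verify that every Littlewood--Paley pairing is absorbed into exactly one of the five terms of the displayed inequality. The Bernstein penalty $C_{J_1}$ on low-frequency $f$ is what allows us to tolerate the $2^{j0+}$ loss without breaking the iteration in Step 4, so the exponents have to be monitored throughout.
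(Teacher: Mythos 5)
Your proposal follows essentially the same route as the paper: Bony paraproduct decomposition of $N_j(\eta,f)$ into low--high, comparable, and high--high blocks, Bernstein plus the $B^{s_0}_{p,\infty}$-bound for the low-frequency $\eta$ piece, a low/high split of $f$ at a $J_1$-dependent threshold for the resonant block, and the divergence-free identity $R^{\perp}f\cdot\nabla g=\nabla\cdot(R^{\perp}f\,g)$ for the high--high tail; the second inequality reuses the commutator decomposition of Lemmas \ref{lem_b1b}--\ref{lem_b1d}. The cut-off convention differs slightly (you split $f$ at $2J_1$, the paper at $3J_1$, which is what produces the $2^{3J_1(1+\frac 2p)}\|P_{\le 3J_1}f\|_p$ factor), but either choice closes.

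One substantive misstatement should be corrected. You write that ``for the high piece $f_{>2J_1}$ the commutator no longer cancels, so each of the two terms is estimated separately.'' That is not how the first term $2^{j\gamma}\|f_{>2J_1}\|_{\dot B^{1+\frac 2p-\gamma}_{p,\infty}}\|\eta_{[j-2,j+2]}\|_p$ arises. Whether or not $f$ lies above $2J_1$, the commutator gain (the factor $\phi(2^{-j}(\xi+\eta))-\phi(2^{-j}\eta)=O(2^{-j}|\xi|)$ together with the Taylor expansion of the heat symbol, i.e.\ the $\sigma_1+\sigma_2$ decomposition of Lemma \ref{lem_b1b}) is exactly what trades the bare $2^j$ from $\nabla\eta_{[j-2,j+2]}$ for a derivative on the low-frequency input $f_{<j-2}$, and it is this derivative, summed over $2J_1<k<j-2$ via $\sum_k 2^{k(1+\frac 2p)}\|f_k\|_p\lesssim 2^{j\gamma}\|f\|_{\dot B^{1-\gamma+\frac 2p}_{p,\infty}}$, that produces the crucial $2^{j\gamma}$ prefactor. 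Estimating the two pieces of the commutator separately would instead give $2^j$ and ruin the iteration in Step~4. You do cite ``argument (1) of Lemma \ref{lem_b1d},'' which carries out precisely this cancellation, so a careful execution lands in the right place, but the prose should say that the cancellation is used for \emph{every} low-relative-to-$j$ input $f_{<j-2}$, not only for $f_{\le 2J_1}$; the $2J_1$ (or $3J_1$) split merely governs whether the resulting derivative is absorbed into a harmless $C_{J_1}$ factor or into the small Besov-type quantity $\|f_{>2J_1}\|_{\dot B^{1+\frac 2p-\gamma}_{p,\infty}}$.
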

\begin{proof}[Proof of Lemma \ref{lem_eta}]
For the first inequality we denote $\widetilde{N_j}(g,h)= P_j e^{tA} ( R^{\perp} e^{-tA} g \cdot \nabla e^{-tA} h)$.
By frequency localization, we write
\begin{align*}
\widetilde {N_j}(\eta, f) = \widetilde{N_j}( \eta_{<j-2}, f_{[j-2,j+2]} )+ 
\widetilde{N_j}(\eta_{[j-2,j+3]}, f_{\le j+5}) +\sum_{k\ge j+4}
\widetilde{N_j}(\eta_k, f_{[k-2,k+2]} ).
\end{align*}
Clearly 
\begin{align*}
\| \widetilde{N_j}(\eta_{<j-2}, f_{[j-2,j+2]} ) \|_p \lesssim 2^j \| f_{[j-2,j+2]}\|_{p+} \| \eta_{<j-2} \|_{\infty-}
\lesssim 2^{j(1+\frac 2p-s_0)} \| f_{[j-2,j+2]} \|_p \| \eta\|_{B^{s_0}_{p,\infty}}.
\end{align*}
For the second term we split $f$ as $f= f_{>3J_1} +f_{\le 3J_1}$. Then
(below we work again with $p+$ for the $\eta$-term which give rises to $2^{j0+}$; the reason for $p+$ is
to avoid the end-point case $p=1$)
\begin{align*}
\| \widetilde{N_j} (\eta_{[j-2,j+3]}, f_{\le j+5} ) \|_p \lesssim 2^{j\gamma} \|\eta_{[j-2,j+3]}\|_p \cdot  \;(2^{j_1(1-\gamma+\frac 2p)}
\|f_{j_1}\|_p)_{l_{j_1}^{\infty} (j_1\ge 2J_1)} +2^{3J_1(1+\frac 2p)} \| P_{\le 3J_1} f\|_p \cdot 2^{j0+} \| \eta_{[j-2,j+3]} \|_p.
\end{align*}
For the diagonal piece, we have
\begin{align*}
\sum_{k\ge j+4} \| \widetilde{N_j} (\eta_k, f_{[k-2,k+2]})\|_p \lesssim 2^j \sum_{k\ge j+4} 2^{k \frac 2p} \| 
\eta_k \|_p \| f_{[k-2,k+2]} \|_p \lesssim 2^j \| \eta\|_{B^{s_0}_{p,\infty}} \sum_{k\ge j+4} 2^{k(\frac 2p-s_0)}\| f_{[k-2,k+2]}\|_p.
\end{align*}

For the second inequality, we denote
\begin{align}
N_j(f,\eta) =P_j e^{tA} (R^{\perp} e^{-tA} f \cdot \nabla e^{-tA} \eta) - R^{\perp} e^{-tA} f \cdot \nabla P_j \eta.
\end{align}
Observe that 
\begin{align}
N_j(f,\eta_{<j-2}) =P_j e^{tA} ( R^{\perp} e^{-tA} f \cdot \nabla e^{-tA} \eta_{<j-2} )
=P_je^{tA} (R^{\perp} e^{-tA} f_{[j-2,j+2]} \cdot \nabla e^{-tA} \eta_{<j-2} ). \notag 
\end{align}
Thus
\begin{align}
\|N_j(f,\eta_{<j-2} )\|_p \lesssim \| f_{[j-2,j+2]} \|_p \cdot 2^{j(1+\frac 2p-s_0)} \| 
\eta\|_{B^{s_0}_{p,\infty} }.
\end{align}
On the other hand,
\begin{align}
N_j(f, \eta_{>j+3}) =P_je^{tA} ( R^{\perp} e^{-tA} f \cdot \nabla e^{-tA} \eta_{>j+3} )
=\sum_{k\ge j+4} P_j e^{tA} ( R^{\perp} e^{-tA} f_{[k-2,k+2]} \cdot \nabla
e^{-tA} \eta_k ).
\end{align}
Thus
\begin{align}
\| N_j(f, \eta_{>j+3})\|_p \lesssim 2^j  \|\eta\|_{B^{s_0}_{p,\infty}}\sum_{k\ge j+4} 2^{k(\frac 2p-s_0)} \| f_{[k-2,k+2]} \|_p.
\end{align}
It remains to estimate $N_j(f, \eta_{[j-2,j+3]})$. We first note that
\begin{align}
\| N_j(f_{\ge j+6}, \eta_{[j-2,j+3]} )\|_p & = \| R^{\perp} e^{-tA} f_{\ge j+6} \cdot \nabla P_j \eta \|_p \notag \\
& \lesssim 2^{j\gamma}\| P_{\ge j+6} f \|_{B^{1+\frac 2p-\gamma}_{p,\infty} } \| \eta_j\|_p.
\end{align}
On the other hand,
\begin{align}
\| N_j( f_{[j-5,j+5]}, \eta_{[j-2,j+3]} ) \|_p \lesssim
2^{j(1+\frac 2p)} \| f_{[j-5, j+5]} \|_p \| \eta_{[j-2,j+3]} \|_p.
\end{align}
Finally to deal with the piece $N_j(f_{\le j-6}, \eta_{[j-2,j+3]} )$, we appeal to similar estimates
in Lemma \ref{lem_b1b} and Lemma \ref{lem_b1d}. We obtain
\begin{align}
\| N_j(f_{[j-5,j+5]}, \eta_{[j-2,j+3]} )\|_p
 &\lesssim C_{J_1} \cdot (2^{j0+} +t \cdot 2^{j\gamma} ) \| P_{\le J_1+10} f\|_p \|
 \eta_{[j-2,j+3]} \|_p  \notag \\
 & \qquad +2^{j\gamma} \| P_{>J_1} f_{<j-6} \|_{\dot B^{1+\frac 2p-\gamma}_{p,\infty} }
 \| \eta_{[j-2,j+3]} \|_p.
 \end{align}
The desired result follows.
\end{proof}

It is clear that for any $T>0$,
\begin{align}
\| \eta^{(n+1)} \|_{ C_t^0 B^{s_0}_{p,\infty} ([0,T])}
&\sim \| P_{\le 1} \eta^{(n+1) } \|_{L_t^{\infty} L_x^p([0,T])} +  (2^{js_0} \|\eta_j^{(n+1)} \|_p )_{l_t^{\infty}l_j^{\infty}
(t\in[0,T],\, j\ge 2)} \notag \\
&= \| P_{\le 1} \eta^{(n+1) } \|_{L_t^{\infty} L_x^p([0,T])} +  (2^{js_0} \|\eta_j^{(n+1)} \|_p )_{l_j^{\infty}l_t^{\infty}
(t\in[0,T],\, j\ge 2)}, \label{norm_equiv1}
\end{align}
where the implied constant (in the notation ``$\sim$'') depends only on $(s_0,p)$. 

By this simple observation, using Lemma \ref{lem_eta}, \eqref{step3_J1T1}, \eqref{initial_contra},  and choosing first $J_1$ sufficiently large
and then $T_1$ sufficiently small, we  obtain
\begin{align*}
\| \eta^{(n+1)} \|_{C_t^0 B^{s_0}_{p,\infty}([0,T_1])} \le \frac 12 \| \eta^{(n)} \|_{C_t^0 B^{s_0}_{p,\infty} ([0,T_1])}
+ C_{\theta_0} \cdot 2^{-n \sigma_0},
\end{align*}
where $C_{\theta_0}$ is a constant depending on $(\theta_0,s_0,p,\gamma,q)$ and $\sigma_0>0$ depends
on $(s_0,\gamma,p)$. This clearly yields the desired contraction in the Banach space 
$C_t^0([0,T_1], B^{s_0}_{p,\infty})$.

Step 5. Time continuity in $B^{1-\gamma+\frac 2p}_{p,q}$.  By the previous step and interpolation, we get
$f^{(n)}$ converges strongly to the limit $f$ 
also in $C_t^0 B^{s^{\prime}}_{p,1}([0,T_1])$ for any $0<s^{\prime}<1-\gamma+\frac2p$.
We still have to show $f \in C_t^0 B^{1-\gamma+\frac 2p}_{p,q}([0,T_1])$. Since $f^{(n)} \to f$  in $C_t^0 L_x^p$ we only
need to consider the high frequency part. Denote $s=1-\gamma+\frac 2p$. By using the estimates in Step 3 and strong convergence
in each dyadic frequency block, we have for any $M\ge 10$, 
\begin{align*}
\sum_{1\le j\le M} 2^{jsq} \| f_j \|_{L_t^{\infty} L_x^p([0,T_1])}^q = \lim_{n\to \infty}  \sum_{1\le j\le M} 2^{jsq} \| f_j^{(n)} 
\|^q_{L_t^{\infty} L_x^p([0,T_1])} <A_1<\infty,
\end{align*}
where $A_1>0$ is a constant independent of $M$.  Thus $ \| (f_j )\|_{l_j^q L_t^{\infty} L_x^p( j\ge 1, \, t \in [0,T_1])} <\infty$.
Since $P_{\le M} f \in C_t^0 B^s_{p,q}$ for any $M$, and 
\begin{align*}
\| P_{\le M} f - P_{\le M^{\prime}} f \|_{C_t^0 B^s_{p,q} } \lesssim  
\biggl(\sum_{M-2\le j \le M^{\prime}+2}  2^{jsq} \| f_j \|_{L_t^{\infty} L_x^p}^q \biggr)^{\frac 1q} \to 0, \quad
\text{as $M^{\prime}>M \to \infty$},
\end{align*}
we obtain $f \in C_t^0 B^s_{p,q}$. 
\begin{rem*}
An alternative argument to show time continuity is to use directly \eqref{step3_J1T1} to get time continuity at $t=0$. 
For $t>0$ one can proceed similarly as the last part of Section 3 and 
show $e^{\epsilon_0 A t} f \in L_t^{\infty} B^{1-\gamma+\frac 2p}_{p,q}$ for some $\epsilon_0>0$ small
 and use it to ``damp" the high frequencies. 
\end{rem*}

Step 6. Set $\theta(t) = e^{-t A} f(t,\cdot)$.   Clearly $\theta \in C_t^0 B^{1-\gamma+\frac 2p}_{p,q}$. Recall
$\theta_n = e^{-t A} f_n(t,\cdot)$. In view of strong convergence of $f_n$ to $f$, we have
$\theta_n \to \theta $ strongly in $C_t^0 B^{s^{\prime}}_{p,1}$ for any $0<s^{\prime} <1-\gamma+\frac 2p$. 
Since for any $0\le t_0 <t \le T_0$ we have
\begin{align*}
\theta^{(n+1)} (t) = e^{-(t-t_0) D^{\gamma}} \theta^{(n+1)}(t_0)
- \int_{t_0}^t  \nabla \cdot e^{ -(t-s) D^{\gamma}} ( R^{\perp} \theta^{(n)}   \theta^{(n+1)} )(s) ds.
\end{align*}
Taking the limit $n\to \infty$ yields 
\begin{align} \label{last_integral_form}
\theta (t) = e^{-(t-t_0) D^{\gamma}} \theta(t_0)
- \int_{t_0}^t  \nabla \cdot e^{ -(t-s) D^{\gamma}} ( R^{\perp} \theta(s)  \theta (s) ) ds.
\end{align}
It should be mentioned  that the above equality holds in the sense of $L^p_x$ and even stronger topology. It is easy to check the
absolute convergence of the integral on the RHS since
\begin{align*}
\| \nabla \cdot e^{-(t-s) D^{\gamma}} (R^{\perp} \theta(s) \theta(s) ) \|_p \lesssim (t-s)^{-1+} \|  D^{(1-\gamma)+} ( R^{\perp}
\theta(s) \theta(s) ) \|_p  
\lesssim (t-s)^{-1+} \| \theta(s) \|^2_{ B^{1-\gamma+\frac 2p}_{p,\infty}}.
\end{align*}
Thus $\theta$ is the desired local solution.  One can regard \eqref{last_integral_form} (together with
some regularity assumptions) as a variant of the usual mild solution. 
Note that $\theta_j=P_j \theta$ is smooth, and one can easy deduce from
the integral formulation \eqref{last_integral_form} the point-wise identity:
\begin{align*}
\partial_t \theta_j = - D^{\gamma} \theta_j - \nabla \cdot P_j ( \theta R^{\perp}\theta).
\end{align*}
From this one can proceed with the localized energy estimates and easily check the uniqueness of solution 
in $C_t^0 B^{1-\gamma+\frac 2p}_{p,\infty}$.
We omit the details.
\begin{rem*}
Much better uniqueness results can be obtained by exploiting the specific form of $R^{\perp}\theta$ in connection with
the $H^{-1/2}$ conservation law for non-dissipative SQG. Since this is not the focus of this work, we will not
dwell on this issue here.
\end{rem*}

\subsection*{Acknowledgements}
The author is supported in part by NSFC 12271236. 

\noindent
\textbf{Conflict of Interest}:  None.


\begin{thebibliography} {000000000}




\bibitem{Abi08}
H. Abidi, T. Hmidi, On the global well-posedness of the critical quasi-geostrophic equation, SIAM J. Math. Anal. 40 (1) (2008) 167--185.

\bibitem{Bis14}
A. Biswas.
  Gevrey regularity for the supercritical quasi-geostrophic equation. 
  J. Differential Equations 257 (2014), no. 6, 1753--1772. 
  

\bibitem{BMS15}
A. Biswas, V.R. Martinez, P. Silva.
On Gevrey regularity of the supercritical SQG equation in critical Besov spaces. 
J. Funct. Anal. 269 (2015), no. 10, 3083--3119. 



\bibitem{JKM21}
Michael S. Jolly, A. Kumar and V.R. Martinez.
On the Existence, Uniqueness, and Smoothing of Solutions to the Generalized SQG Equations in Critical Sobolev Spaces
Commun. Math. Phys. volume 387, pages 551--596 (2021). 



\bibitem{CMZ07}
Q. Chen, C. Miao and Z. Zhang, A new Bernstein's inequality and the 2D dissipative quasi-geostrophic equation, Commun. Math. Phys. 271(3) (2007), 821--838.

\bibitem{CC_CMP04}
A. C\'ordoba, D. C\'ordoba: A maximum principle applied to quasi-geostrophic equations. Commun. Math. Phys. 249, 551--528 (2004)


\bibitem{CCG18}
A. C\'ordoba, D. C\'ordoba and F. Gancedo. 
Uniqueness for SQG patch solutions, Trans. Amer. Math. Soc. Ser. B, 5, 1--31, 2018.


\bibitem{CCCGW12}
D. Chae, P. Constantin, D. C\'ordoba, F. Gancedo and J. Wu.
 Generalized surface quasi-geostrophic equations with singular velocities. 
 Commun. Pure Appl. Math. 65 (8), 1037--1066 (2012)


\bibitem{CL12}
D. Chamorro and P. Lemari\'e-Rieusset, Quasi-geostrophic equations, nonlinear Bernstein inequalities and $\alpha$-stable processes, Rev. Mat. Iberoam. 28(4), (2012), 1109--1122.



\bibitem{CLH21}
X. Cheng, D. Li and K. Hyunju.  Non-uniqueness of steady-state weak solutions to the surface quasi-geostrophic equations. 
Commun. Math. Phys. 388, 1281--1295 (2021).


\bibitem{CV10}
L. Caffarelli and A. Vasseur. Drift diffusion equations with fractional diffusion and the quasi-geostrophic equation. 
Annals of Math., Vol. 171 (2010), No. 3, 1903--1930. 


\bibitem{D10}
H. Dong, Dissipative quasi-geostrophic equations in critical Sobolev spaces: smoothing effect and global well-posedness, Discrete Contin. Dyn. Syst. 26 (4) (2010) 1197--1211. 


\bibitem{Hmi07}
T. Hmidi, S. Keraani, Global solutions of the super-critical 2D quasi-geostrophic equations in Besov spaces, Adv. Math. 214 (2007) 618--638. 


\bibitem{Iwa20}
T. Iwabuchi.
Analyticity and large time behavior for the Burgers equation and the quasi-geostrophic equation, the both with the critical dissipation.
Annales de l'Institut Henri Poincare (C) Analyse Non Lineaire. 37, 4, p. 855--876  (2020).

\bibitem{Ju04}
N. Ju, Existence and uniqueness of the solution to the dissipative 2D quasi-geostrophic equations in the Sobolev space, Comm. Math. Phys. 251 (2004) 365--376. 

\bibitem{Ju_CMP05}
N. Ju, The maximum principle and the global attractor for the dissipative 2D quasi-geostrophic equations, Comm. Math. Phys. 255 (2005) 161--181. 

\bibitem{Ju06}
N. Ju, Global solutions to the two dimensional quasi-geostrophic equation with critical or supercritical dissipation, Math. Ann. 334 (2006) 627?642.


\bibitem{KNV07}
A. Kiselev, F. Nazarov and A. Volberg.
Global well-posedness for the critical 2D dissipative quasi-geostrophic equation.
 Inventiones Math. 167 (2007) 445--453.

\bibitem{L13}
D. Li.
On a frequency localized Bernstein inequality and some generalized Poincar\'{e}-type inequalities.
Math. Res. Lett. 20 (2013), no. 5, 933--945. 

\bibitem{LY23}
D. Li and Y. Sire.
Remarks on the Bernstein inequality for higher order operators and related results.
Trans. Amer. Math. Soc. 376 (2023), 945--967.

\bibitem{Ma14}
V.R. Martinez, On Gevrey regularity of equations of fluid and geophysical fluid dynamics with applications to 2D and 3D turbulence, ProQuest LLC, Ann Arbor, MI, Ph.D. thesis, Indiana University, 2014.

\bibitem{Mi06}
H. Miura, Dissipative quasi-geostrophic equation for large initial data in the critical Sobolev space, Comm. Math. Phys. 267 (2006) 141--157.


\bibitem{Mar08} F. Marchand. Existence and regularity of weak solutions to the quasi-geostrophic equations in the spaces $L^p$ or $\dot{H}^{-\frac12}$. Comm. Math. Phys., 277(1):45–67, 2008.

\bibitem{Ped87}
J. Pedlosky, Geophysical Fluid Dynamics, Springer-Verlag, New York, 1987.

\bibitem{WZCMP11}
H. Wang and Z. Zhang.
A frequency localized maximum principle applied to the 2D quasi-geostrophic equation. 
Comm. Math. Phys. 301 (2011), no. 1, 105--129. 



\bibitem{Wu05c}
J. Wu, The two-dimensional quasi-geostrophic equation with critical or supercritical dissipation, Nonlinearity 18 (2005) 139--154. 


\bibitem{Res95} S.G. Resnick. Dynamical problems in non-linear advective partial differential equations. PhD
thesis, U of Chicago, 1995. 


\bibitem{Z18}
J. Zhao
Well-posedness and Gevrey analyticity of the generalized Keller--Segel system in critical Besov spaces
Annali di Matematica Pura ed Applicata  volume 197, pages 521--548 (2018).

\end{thebibliography}
\end{document}